%
%
%


\documentclass{amsart}




\usepackage{amsmath,amsthm,amsfonts,amssymb,latexsym,mathrsfs,graphicx}

\usepackage{hyperref}

\usepackage{enumerate}
\usepackage[shortlabels]{enumitem}
\usepackage{color}

\headheight=5pt \textheight=600pt \textwidth=450pt \topmargin=14pt
\oddsidemargin=11pt\evensidemargin=14pt

\newcommand{\Irr}{{\mathrm {Irr}}}
\newcommand{\Irrek}{{\mathrm {Irr}_{ \Bbb{F}, even}}}

\newcommand{\acd}{{\mathrm {acd}}}
\newcommand{\acdek}{{\mathrm{acd}_{\Bbb{F}, even}}}
\newcommand{\acdk}{{\mathrm{acd}^*_{\Bbb{F}}}}

\newcommand{\PSL}{{\mathrm {PSL}}}

\newcommand{\SL}{{\mathrm {SL}}}

\newtheorem{theorem}{Theorem}[section]

\newtheorem{proposition}[theorem]{Proposition}
\theoremstyle{corollary}
\newtheorem{corollary}[theorem]{Corollary}
\newtheorem{lemma}[theorem]{Lemma}

\newtheorem{Remark}[theorem]{Remark}

\newtheorem*{ThmA}{Theorem A}
\newtheorem*{ThmB}{Theorem B}
\newtheorem*{ThmD}{Theorem D}

\newtheorem*{CorC}{Corollary C}






\begin{document}

\title[Average character degree]{Variations on average character degrees and solvability}



\author[N. Ahanjideh]{Neda Ahanjideh}
\address{Department of Pure Mathematics, Faculty of Mathematical Sciences, Shahrekord University,Shahrekord, Iran}

\email{ahanjidn@gmail.com; ahanjideh.neda@sci.sku.ac.ir}
\author[Z. Akhlaghi]{Zeinab  Akhlaghi}
\address{ Department of Math. and Computer Sci.,Amirkabir University of Technology (Tehran Polytechnic), 15914 Tehran, Iran.}
\address{
	School of Mathematics,
	Institute for Research in Fundamental Science(IPM)
	P.O. Box:19395-5746, Tehran, Iran.}
\email{z\_akhlaghi@aut.ac.ir;
z\_akhlaghi@imp.ir}
\thanks{The second author  is supported by a grant from IPM (No. 1401200113). }

\author[K. Aziziheris]{Kamal Aziziheris}
\address{
	Department of Pure Mathematics, Faculty of Mathematical Sciences,
	University of Tabriz,
	Tabriz,
	Iran.}
\email{azizi@tabrizu.ac.ir}
\thanks{The research of the third author was funded by the grant No. 99001382
	from the Iranian National Science Foundation (INSF). Also, The third author would like to thank the University of Tabriz
	for supporting of this work.}

\subjclass[2020]{20C15}

\date{}

\dedicatory{}

\begin{abstract}
	Let $G$ be a finite group, $\Bbb{F}$ be one of the fields $\mathbb{Q},\mathbb{R}$ or $\mathbb{C}$, and $N$ be a non-trivial normal subgroup of $G$.
	Let $\acdk(G)$ and $\acdek(G|N)$  be the average degree of all non-linear $\Bbb F$-valued irreducible characters of $G$ and of even degree $\Bbb F$-valued irreducible characters of $G$ whose kernels do not contain $N$, respectively.  We assume the average of an empty set is $0$ for more convenience.
	In this paper we prove that if ${\rm acd}^*_{\mathbb{Q}}(G)< 9/2$ or  $0<{\rm acd}_{\mathbb{Q},even}(G|N)<4$, then $G$ is solvable.
	Moreover, setting $\Bbb{F} \in \{\Bbb{R},\Bbb{C}\}$,  we obtain the solvability of $G$ by assuming   $\acdk(G)<29/8$ or $0<\acdek(G|N)<7/2$, and we conclude the solvability of $N$ when  $0<\acdek(G|N)<18/5$.  Replacing $N$ by $G$ in $\acdek(G|N)$  gives us an extended  form of a result by   Moreto and Nguyen. Examples are given to show that  all the bounds are sharp.
\end{abstract}

\maketitle

\section{Introduction}
The interaction between the structure of finite groups and character degrees has
been a main topic of interest for a long time. In this way of research, an invariant
concerning character degrees is often considered, and studies how it influences the
structure of the group. One of those invariants is the so-called average character degree,
which has attracted considerable interest recently. For a finite group $G$, let $\Irr(G)$
denote the set of all irreducible (complex) characters of $G$ and
$$\acd(G):=\frac{\sum_{\chi\in \Irr(G)}\chi(1)}{|\Irr(G)|}$$
denote the average character degree of $G$. It has been proved by I. M. Isaacs, M. Loukaki and A. Moreto
in \cite{isa} that if $\acd(G)<3$, then $G$ is solvable. In \cite{moreto}, the authors observed
that  there is no non-solvable group whose the average
character degree is smaller than $\acd(\rm{Alt}_5)= 16/5$. These results indicate that the structure of a finite group is controlled
by its average character degree.

It is natural that we just consider the degrees of certain
irreducible characters (like the rational/real irreducible characters, the irreducible characters of either
$p'$-degree or degree divisible by $p$, where $p$ is a fixed prime, or irreducible characters whose kernels do not
contain a fixed normal subgroup). Then, we expect also to obtain
structural information of the group.

Let $N$ be a non-trivial normal subgroup of the finite group $G$ and let $\Irr(G|N)$ denote the set of all complex irreducible characters of
$G$ whose kernels do not contain $N$. Hence, $\Irr(G|G')$ consists of all complex non-linear irreducible characters of
$G$. Assume  $\Bbb{F}$ is one of the fields $\mathbb{C},\mathbb{R}$ or $ \mathbb{Q}$. Then we write
$$\Irr^*_{\Bbb{F}}(G)=\{\chi \in \Irr(G)\ |\ \chi \ \ \text{is  non-linear and $\Bbb{F}$-valued} \},$$
$$\Irrek(G|N)=\{\chi\in \Irr(G|N)| \chi \text{ is an $\Bbb{F}$-valued even degree character}  \}.$$

Let $\acd(G|N)$, $\acdk(G)$, and $\acdek(G|N)$ be the average degree of
irreducible characters belong to $\Irr(G|N)$, $\Irr^*_{\Bbb{F}}(G)$, and $\Irrek(G|N)$, respectively.
Obviously, $\acd(G|G')={\rm acd}^{*}_{\Bbb{C}}(G)$. In \cite{mar}, it is proved that if $\acd(G|N)< 16/5$,
for some non-trivial normal subgroup $N$, then $G$ is solvable.   In this paper, we replace $N$ by the derived subgroup $G'$
and we obtain the following upper bounds  for $\acd^*_{\Bbb F}(G)$ that guarantees the solvability of $G$:
\begin{ThmA}\label{A}
	Let $G$ be a finite non-abelian group. Then:
	\begin{enumerate}
		\item If ${\rm acd}^{*}_{\Bbb{C}}(G)<{\rm acd}^{*}_{\Bbb{C}}({\rm SL}_{2}(5))=\frac{29}{8}$, then $G$ is solvable.
		\item If ${\rm acd}^{*}_{\Bbb{R}}(G)<{\rm acd}^{*}_{\Bbb{R}}({\rm SL}_{2}(5))=\frac{29}{8}$, then $G$ is solvable.
		\item If ${\rm acd}^{*}_{\Bbb{Q}}(G)<{\rm acd}^{*}_{\Bbb{Q}}({\rm Alt}_{5})=\frac{9}{2}$, then $G$ is solvable.
	\end{enumerate}
\end{ThmA}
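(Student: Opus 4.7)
The strategy is a CFSG-based induction on $|G|$, modelled on the approach of Isaacs--Loukaki--Moret\'o for $\acd$ and the subsequent work on $\acd(G|N)$ cited in the introduction. Fix one of the three statements and let $b$ denote the corresponding bound; choose a counterexample $G$ of minimum order, so $G$ is non-solvable with $\acdk(G)<b$ while every proper quotient and every proper subgroup of $G$ is solvable. Let $N$ be a minimal normal subgroup of $G$.

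Suppose first that $N$ is abelian. Then $G/N$ is still non-solvable, and the inflations of $\Irr^*_{\Bbb F}(G/N)$ sit inside $\Irr^*_{\Bbb F}(G)$. The remaining characters lie in $\Irr^*_{\Bbb F}(G)\setminus\Irr^*_{\Bbb F}(G/N)$, i.e.\ are the $\Bbb F$-valued $\chi$ with $N\not\subseteq\ker\chi$; a Clifford-theoretic argument applied to a non-trivial $\lambda\in\Irr(N)$ lying under such a $\chi$ bounds $\chi(1)$ below by a quantity comfortably exceeding $b$. Writing $\acdk(G)$ as the weighted average of the two contributions then forces $\acdk(G/N)<b$, contradicting minimality. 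Hence $N$ must be non-abelian, and so $N\cong S_1\times\cdots\times S_k$ with each $S_i$ isomorphic to a common non-abelian simple group $S$.

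The non-abelian case is attacked through the socle. Using Clifford theory together with Gallagher's theorem, every $\theta\in\Irr^*_{\Bbb F}(N)$ gives rise to a family of $\Bbb F$-valued non-linear characters of $G$ whose average degree is at least $\theta(1)$ (with appropriate bookkeeping for $\mathrm{Gal}(\bar{\Bbb Q}/\Bbb F)$-fusion in the rational case), so that $\acdk(G)\ge\acdk(N)\ge\acdk(S)$ after an elementary inequality on direct powers. It is therefore enough to establish the lower bound $\acdk(H)\ge b$ for every non-abelian simple group $H$, and also for every perfect central cover that can occur as the bottom section of a minimal counterexample; the equality cases $\mathrm{Alt}_5$ in (3) and $\SL_2(5)$ in (1)--(2) will surface precisely here, so the inequalities must be sharp.

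This reduces the theorem to a classification-driven enumeration of $\acdk(H)$. For $\mathrm{Alt}_n$ with $n=5,6$ the values are read off ATLAS, while for $n\ge 7$ one combines the fact that nearly every irreducible character of $\mathrm{Alt}_n$ is $\Bbb F$-valued (even rational-valued, apart from at most one Galois-conjugate pair) with standard lower bounds on character degrees of $\mathrm{Alt}_n$. Sporadic groups are treated directly from ATLAS. For groups of Lie type $H(q)$ a Deligne--Lusztig count of semisimple and unipotent characters, together with the minimal non-trivial degree, gives the bound in all but a handful of small-rank families. I expect the genuine obstacle to be part (3) on the small-rank Lie type families, in particular $\PSL_2(q)$ and $\PSU_3(q)$, where the supply of $\Bbb Q$-valued characters is very small and one cannot rely on asymptotic averaging; there, the argument must identify the few rational-valued non-linear characters explicitly and show that they are individually large enough to push $\acdk$ past $9/2$. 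A second delicate point is the treatment of the quasi-simple cover $\SL_2(5)$ in parts (1)--(2): the induction in the abelian-$N$ reduction has to be set up so that a chief section $\SL_2(5)/\Center(\SL_2(5))$ of $G$ is recognised as the extremal configuration rather than pruned away.
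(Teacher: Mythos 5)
Your outline founders precisely where the theorem is hard: the case of an abelian minimal normal subgroup $N$ (equivalently, an abelian chief factor $T$ at the bottom of a minimal non-solvable normal subgroup $M$). You assert that every $\Bbb F$-valued non-linear $\chi$ with $N\not\subseteq\ker\chi$ has degree ``comfortably exceeding'' the bound $b$, so that the average is controlled by $\acdk(G/N)$. That is false: non-solvable groups such as $\SL_2(5)$ itself, and more generally central products $G=MC$ with $M\cong\SL_2(5)$ or $M\cong 3\cdot{\rm Alt}_6$, have faithful irreducible characters of degree $2$ or $3$ lying over the abelian subgroup ${\bf Z}(M)$, and these are exactly the configurations that realise the extremal value $29/8$. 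The paper's proof of parts (1)--(2) must therefore (i) show, via the classification of primitive complex linear groups of degree $2$ and $3$ (Blichfeldt), that a character of degree $2$ or $3$ in ${\rm Irr}(G|T)$ forces $G=MC$ to be a central product with $M\cong\SL_2(5)$, $3\cdot{\rm Alt}_6$ or $6\cdot{\rm Alt}_6$, and then (ii) compute ${\rm acd}^*_{\Bbb F}(G)$ explicitly in each such configuration by combining the degrees in ${\rm Irr}(M|T)$ and ${\rm Irr}(M/T)$ with the multiplicities $n_d^{\Bbb F}(C/T)$, including a separate argument that all relevant characters of $C/T$ are forced to be real-valued. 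Your proposal replaces all of this with the remark that the induction ``has to be set up so that $\SL_2(5)$ is recognised as the extremal configuration,'' which names the obstacle without supplying an argument. Part (3) has the analogous and even longer analysis for chief factors isomorphic to ${\rm Alt}_5$ and ${\rm Alt}_6$, where rationality of specific induced characters must be certified by a Galois fixed-point argument (a character that is the unique constituent of its degree over a $G$-orbit is automatically rational); your parenthetical ``bookkeeping for Galois fusion'' does not provide this.

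The non-abelian reduction is also gapped. The inequality ${\rm acd}^*_{\Bbb F}(G)\ge{\rm acd}^*_{\Bbb F}(N)$ does not follow from Clifford theory: ${\rm Irr}^*_{\Bbb F}(G)$ contains all of ${\rm Irr}^*_{\Bbb F}(G/N)$ (the fibre over $1_N$), and the quotient can contribute many characters of degree $2$ or $3$ that pull the average below ${\rm acd}^*_{\Bbb F}(S)$; so the theorem does not reduce to enumerating ${\rm acd}^*_{\Bbb F}(H)$ over simple groups $H$, and the Deligne--Lusztig computation you sketch attacks the wrong quantity. What the paper actually does with a non-abelian minimal normal subgroup $L\le G'$ is a counting transfer: $L$ carries a rational-valued character of degree at least $5$ that extends to $G$, so Gallagher's theorem injects the degree-$2$ and degree-$3$ characters of $G/L$ into characters of $G$ over $L$ of degree at least $10$, and this count contradicts $\acdk(G)<b$ directly (with separate treatment of $L\cong\PSL_2(5)$ and $\PSL_2(7)$, which do admit degree-$3$ characters). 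In short, the skeleton (minimal counterexample, split on the minimal normal subgroup) matches the paper, but both branches of your argument rely on claims that fail in the extremal cases, and the machinery needed to handle those cases is absent.
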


It follows from Theorem~B of \cite{moreto} that if ${\rm acd}_{\Bbb{C},even}(G)<{\rm acd}_{\Bbb{C},even}({\rm SL}_{2}(5))=\frac{18}{5}$, then
the group $G$ is a solvable group. We extend this result to  $\acdek(G|N)$, where $\Bbb{F}$ is one of the fields $\mathbb{C},\mathbb{R}$ or $ \mathbb{Q}$ and $N$ is a non-trivial normal subgroup of $G$.
Adopting  the convention that the average of the empty set is $0$,  we get the results as follows:
\begin{ThmB}\label{B}
	Let $G$ be a finite and $N$ be a non-trivial normal subgroup of $G$. Then:
	\begin{enumerate}
		\item If $0<{\rm acd}_{\Bbb{C},even}(G|N)<{\rm acd}_{\Bbb{C},even}({\rm SL}_{2}(5)|{\rm SL}_{2}(5))=\frac{18}{5}$, then $N$ is solvable.
		\item If $0<{\rm acd}_{\Bbb{R},even}(G|N)<{\rm acd}_{\Bbb{R},even}({\rm SL}_{2}(5)|{\rm SL}_{2}(5))=\frac{18}{5}$, then $N$ is solvable.
		\item If $0<{\rm acd}_{\Bbb{Q},even}(G|N)<{\rm acd}_{\Bbb{Q},even}({\rm Alt}_5|{\rm Alt}_5)=4$, then $N$ is solvable.
	\end{enumerate}
\end{ThmB}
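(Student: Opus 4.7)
The plan is to induct on $|G|$ via a minimal counterexample $(G,N)$: $N$ is a non-solvable normal subgroup of $G$ with $0 < \acdek(G|N) < c$, where $c = 18/5$ in cases (1)--(2) and $c = 4$ in case (3). The workhorse of the argument is the following partition identity. For any $G$-invariant subgroup $M$ with $M \subseteq N$, any $\chi \in \Irrek(G|N)$ either has $M \not\subseteq \ker\chi$ (placing it in $\Irrek(G|M)$, since $M \subseteq N$ then forces $N \not\subseteq \ker\chi$ automatically) or has $M \subseteq \ker\chi$ (placing it in $\Irrek(G/M|N/M)$ via inflation). Thus
\[
\Irrek(G|N) \;=\; \Irrek(G|M)\;\sqcup\;\Irrek(G/M|N/M),
\]
so $\acdek(G|N)$ is a weighted average of the two sub-averages. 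If both sub-averages are at least $c$, so is $\acdek(G|N)$, contradicting the hypothesis.

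First I would take $M$ to be the largest $G$-invariant solvable subgroup of $N$. Because $N$ is non-solvable we have $M < N$ and $N/M$ is non-solvable with trivial $G$-invariant solvable radical. Minimality of the counterexample applied to $(G/M,N/M)$ then forces $\Irrek(G/M|N/M)$ either to be empty or to have average at least $c$. Via the partition identity, handling the first possibility separately, this permits a reduction to the case $M = 1$, i.e.\ the $G$-socle of $N$ is a direct product of non-abelian simple groups. A further application of the partition, now with $M$ a minimal non-abelian $G$-normal subgroup of $N$, reduces matters to the situation $N \cong S^k$ with $S$ a non-abelian simple group on which $G$ acts transitively by permutation of the $k$ factors.

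At this stage Clifford theory organizes $\Irrek(G|N)$ by the $G$-orbits on $\Irr(N)\setminus\{1_N\}$: for each orbit of $\theta = \theta_1 \otimes \cdots \otimes \theta_k$, the number and degrees of the irreducible constituents of $\theta^G$ are controlled by the inertia subgroup $I_G(\theta)$ and by whether the relevant extensions are $\Bbb F$-valued of even degree. The main obstacle is the final almost-simple estimate: one needs sharp lower bounds on the count and the sum of degrees of even-degree $\Bbb F$-valued irreducible characters of every non-abelian simple group (and of each almost-simple group built on it), such that the bound $c$ is attained only at the stated extremal examples $\SL_{2}(5)$ for $\Bbb F \in \{\Bbb C, \Bbb R\}$ and $\mathrm{Alt}_5$ for $\Bbb F = \Bbb Q$. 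Such bounds are of Moret\'o--Tiep type and ultimately depend on the classification of finite simple groups; the three field cases share the reduction skeleton above but each requires its own character-table tabulation to rule out equality in the extremal inequality.
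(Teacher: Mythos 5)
Your partition identity $\Irrek(G|N)=\Irrek(G|M)\,\dot\cup\,\Irrek(G/M|N/M)$ is correct, but the step where you take $M$ to be the solvable radical of $N$ and claim this ``permits a reduction to the case $M=1$'' is where the argument breaks. Minimality does give $\acdek(G/M|N/M)\geq c$ (or emptiness), and hence forces $0<\acdek(G|M)<c$; but $M$ is \emph{solvable}, so neither the induction hypothesis nor the theorem's conclusion gives you any contradiction from that inequality, and no reduction to trivial solvable radical follows. The inequality $\acdek(G|M)<c$ is in fact genuinely attainable: for $G=N=\SL_2(5)$ with $M={\bf Z}(G)$ one has ${\rm acd}_{\Bbb{C},even}(G|M)=(2+2+4+6)/4=7/2<18/5$, while ${\rm acd}_{\Bbb{C},even}(G/M|N/M)=4$, and only the \emph{weighted combination} of the two blocks reaches $18/5$. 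So the threshold is not inherited by each block of the partition separately, and any argument that treats the block over the solvable radical in isolation cannot close. Consequently your subsequent reduction to $N\cong S^k$ semisimple, and the final almost-simple tabulation, rest on a false premise.

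This is precisely the difficulty the paper's proof is built around. Its route is: first show (via an extendible rational character of degree $\geq 5$ on a non-abelian minimal normal subgroup, Lemma \ref{us1}, plus a counting reformulation of the average) that every minimal normal subgroup of $G$ inside $N$ is abelian; then pick a minimal non-solvable $M\unlhd G$ in $N$ and an abelian minimal normal $T\leq M$, use minimality to produce a degree-$2$ character in $\Irr_{\Bbb F}(G|T)$, invoke Lemma \ref{cp} to get the central product $G=MC$ with $M\cong\SL_2(5)$ and $T={\bf Z}(M)$ (which immediately disposes of $\Bbb F=\Bbb Q$, since $\SL_2(5)$ has no rational degree-$2$ character), and finally compute the \emph{combined} average over $\Irr_{even}(G|T)\cup\Irr_{even}(G/T|N/T)$ explicitly via the central-product character correspondence (Lemma \ref{cp0}), showing it is $\geq 18/5$. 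If you want to salvage your outline, you must replace the ``reduce to $M=1$'' step by an analysis of exactly this mixed configuration rather than hoping each block clears the bound on its own.
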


By {\rm \cite[Corollary 1.4]{hung}}, if $G$ is  non-solvable, then $G$ admits a real irreducible character of even degree. This shows that if $\Bbb{F} \in \{\Bbb{R},\Bbb{C}\}$ and ${\rm acd}_{\Bbb{F},even}(G)=0$, then $G$ is solvable. Hence, if we replace $N$ by $G$ in the  previous theorem, we get the following corollary in which the first part is exactly Theorem~B of \cite{moreto}.
\begin{CorC}\label{C}
	Let $G$ be a finite. Then:
	\begin{enumerate}
		\item If ${\rm acd}_{\Bbb{C},even}(G)<{\rm acd}_{\Bbb{C},even}({\rm SL}_{2}(5))=\frac{18}{5}$, then $G$ is solvable.
		\item If ${\rm acd}_{\Bbb{R},even}(G)<{\rm acd}_{\Bbb{R},even}({\rm SL}_{2}(5))=\frac{18}{5}$, then $G$ is solvable.
		\item If $0<{\rm acd}_{\Bbb{Q},even}(G)<{\rm acd}_{\Bbb{Q},even}({\rm Alt}_5)=4$, then $G$ is solvable.
	\end{enumerate}
\end{CorC}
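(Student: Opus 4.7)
The plan is to derive Corollary~C as a direct specialization of Theorem~B to the case $N = G$, with the only delicate point being the degenerate case in which no $\Bbb{F}$-valued even-degree irreducible character exists.

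First I would observe that, taking $N = G$, a character $\chi \in \Irr(G)$ satisfies $G \not\subseteq \ker\chi$ precisely when $\chi \ne 1_G$. Since the trivial character has odd degree $1$, it does not belong to $\Irrek(G)$ in any case, so $\Irrek(G \mid G) = \Irrek(G)$, and hence $\acdek(G \mid G) = \acdek(G)$ whenever this common set is non-empty.

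With this identification, part~(3) is immediate from Theorem~B(3): its hypothesis already includes $0 < \acdek(G)$ with $\Bbb{F} = \Bbb{Q}$, so $0 < \acdek(G \mid G) < 4$, and Theorem~B(3) yields the solvability of $N = G$. For parts (1) and (2) there is the additional degenerate possibility $\acdek(G) = 0$, i.e.\ $\Irrek(G) = \varnothing$; here I would invoke \cite[Corollary~1.4]{hung}, which asserts that every non-solvable finite group admits a real-valued irreducible character of even degree. Such a character is automatically both $\Bbb{R}$- and $\Bbb{C}$-valued of even degree, so $\Irrek(G) = \varnothing$ with $\Bbb{F} \in \{\Bbb{R},\Bbb{C}\}$ already forces $G$ to be solvable. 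In the remaining case $\acdek(G) > 0$ one has $0 < \acdek(G \mid G) < 18/5$, and Theorem~B(1) or (2) gives the solvability of $G$.

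I do not anticipate any real obstacle: the entire argument is a bookkeeping specialization of the already-established Theorem~B combined with the cited existence result from \cite{hung}, so the proof should occupy only a few lines.
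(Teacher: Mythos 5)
Your proposal is correct and follows exactly the route the paper takes: Corollary~C is obtained by specializing Theorem~B to $N=G$ (noting $\Irrek(G\mid G)=\Irrek(G)$ since the trivial character has odd degree), with the degenerate case $\acdek(G)=0$ for $\Bbb F\in\{\Bbb R,\Bbb C\}$ disposed of by \cite[Corollary~1.4]{hung}. Nothing is missing.
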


Note that Tiep and  Tong-Viet in {\rm \cite[Theorem B]{pham}} proved that  if $G$ is  non-solvable such that no simple group ${\rm PSL}_2(3^{2f+1})$ (with $f \geq 1$) is involved in $G$, then $G$ has a  rational irreducible character of even degree. So, it follows from Corollary C that if ${\rm acd}_{\Bbb{Q},even}(G)=0$, then either $G$ is solvable or some simple group ${\rm PSL}_2(3^{2f+1})$ (with $f \geq 1$) is involved in $G$.

Finally,  we prove that:
\begin{ThmD}\label{D}
	Let $G$ be a finite  group and $N$ be a non-trivial normal subgroup of $G$. Then:
	\begin{enumerate}
		\item If $0<{\rm acd}_{\Bbb{C},even}(G|N)<{\rm acd}_{\Bbb{C},even}({\rm SL}_{2}(5)|{\bf Z}(\SL_2(5)))=\frac{7}{2}$, then $G$ is solvable.
		\item If $0<{\rm acd}_{\Bbb{R},even}(G|N)<{\rm acd}_{\Bbb{R},even}({\rm SL}_{2}(5)|{\bf Z}(\SL_2(5)))=\frac{7}{2}$, then $G$ is solvable.
		\item If $0<{\rm acd}_{\Bbb{Q},even}(G|N)<{\rm acd}_{\Bbb{Q},even}({\rm Alt}_5| { \rm Alt}_5)=4$, then $G$ is solvable.
	\end{enumerate}
\end{ThmD}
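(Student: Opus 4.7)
The plan is to argue by induction on $|G|$, with ties broken by $|N|$; let $(G,N)$ be a minimal counterexample. Since $7/2<18/5$ and the threshold $4$ coincides with the one in Theorem~B in the rational case, the hypothesis of Theorem~D implies that of Theorem~B, so $N$ is solvable and consequently $G/N$ is non-solvable.

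The first reduction is to prove that $N$ may be assumed a minimal normal subgroup of $G$. Given any $M\trianglelefteq G$ with $1<M<N$, inflation gives the disjoint decomposition
\[
  \Irrek(G\mid N) \;=\; \Irrek(G\mid M) \;\sqcup\; \Irrek(G/M\mid N/M),
\]
so $\acdek(G\mid N)$ is a weighted average of the two right-hand averages. Induction on $|G|$ applied to the non-solvable group $G/M$ forces $\acdek(G/M\mid N/M)\in\{0\}\cup[7/2,\infty)$ (respectively $\{0\}\cup[4,\infty)$ in the rational case), and the minimality of $|N|$ applied to the pair $(G,M)$ forces the same for $\acdek(G\mid M)$. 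Since the hypothesis guarantees that the two pieces are not simultaneously empty, the weighted average is then itself $\ge 7/2$ (resp.\ $4$), contradicting the hypothesis. Hence $N$ is a minimal normal subgroup, and being solvable it is an elementary abelian $p$-group.

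From here I would split on whether $N\le Z(G)$. If $N\not\le Z(G)$, the conjugation action of $G$ on $\widehat{N}$ has a non-trivial orbit; for $\lambda$ in such an orbit the inertia subgroup $T=I_G(\lambda)$ satisfies $[G:T]\ge 2$, and Clifford theory identifies $\mathrm{Irr}(G\mid\lambda)$ with $\{\psi^{G}:\psi\in\mathrm{Irr}(T\mid\lambda)\}$, contributing characters of degree $[G:T]\psi(1)$. Running through the $G$-orbits on $\widehat{N}\setminus\{1_N\}$ and using Galois-conjugacy to secure $\Bbb{F}$-rationality should then yield enough even-degree characters to force $\acdek(G\mid N)\ge 7/2$. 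If instead $N\le Z(G)$, minimality forces $|N|=p$, and the existence of a non-trivial $\Bbb{F}$-valued character on $N$ further forces $p=2$ when $\Bbb{F}\in\{\Bbb{R},\Bbb{Q}\}$; in either case the members of $\mathrm{Irr}(G\mid N)$ are exactly the irreducible characters of $G$ that are faithful on $N$.

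The central case is where the extremal example $\SL_2(5)$ lives and will be the main obstacle. My approach is to pick $H\trianglelefteq G$ such that $H/N$ is a non-abelian chief factor of $G/N$, so that $H/N\cong S^k$ for some non-abelian simple $S$ and $H$ is a central extension of $S^k$ by $N$, i.e.\ a section of the Schur cover of $S^k$. A uniform inspection of the character tables of the Schur covers of every non-abelian simple group with multiplier of order divisible by $2$ should show that the average of the even-degree $\Bbb{F}$-valued faithful characters of any such $H$ is at least $7/2$ (respectively $4$), with equality attained only at $\SL_2(5)$ (respectively at $\mathrm{Alt}_5$). Clifford theory applied to $H\trianglelefteq G$ then transports this bound to $G$ via induction of faithful-on-$N$ characters, which can only magnify degrees while preserving parity and, with careful control of the inertia, $\Bbb{F}$-rationality. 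The technical heart of the proof is thus this case-by-case inspection together with the Clifford-theoretic lifting from $H$ to $G$.
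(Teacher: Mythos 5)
Your opening moves are sound: the observation that the hypothesis of Theorem D implies that of Theorem B (since $7/2<18/5$ and the rational thresholds coincide), hence that $N$ is solvable and $G/N$ is non-solvable in a counterexample, is exactly how the paper begins; and your reduction to $N$ minimal normal via the partition $\Irrek(G|N)=\Irrek(G|M)\sqcup\Irrek(G/M|N/M)$ and the lexicographic induction on $(|G|,|N|)$ is correct and is a genuinely different (and clean) reduction from the paper's, which instead inducts on $|G|+[G:N]$ and works orbit-by-orbit over $\Irr(N)$ through a maximal normal subgroup $E\supseteq N$ with $G/E$ non-solvable. However, from that point on the proposal consists of claims rather than arguments, and the two claims that would constitute the heart of the proof are, as stated, not sustainable. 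In the non-central case you assert that non-trivial orbits of $G$ on $\widehat N$ ``should yield enough even-degree characters to force $\acdek(G|N)\ge 7/2$''; but the dangerous configuration is precisely $[G:I_G(\lambda)]=2$ with linear characters of $I_G(\lambda)$ above $\lambda$, which induce to irreducible characters of degree exactly $2$ — even, $\Bbb F$-valued after Galois averaging, and below the target. The paper disposes of these only by locating a non-abelian chief factor $M/N$ inside the inertia group, producing an extendible rational character $\psi$ of $M/N$ of degree $\ge 5$ (Lemmas \ref{Lemma 4.1}, \ref{Lemma 2.6}, \ref{us1}), and pairing each small character $(\lambda_0\psi_0)^G$ with the large character $(\lambda_0\psi)^G$ via Gallagher and Lemmas \ref{Lemma 2.2}, \ref{2.2} (this is Theorem \ref{us2}); nothing in your sketch plays this role.

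In the central case there are two further problems. First, ``a uniform inspection of the character tables of the Schur covers of every non-abelian simple group'' is not a finite verification; what is actually needed is a reason why only characters of degree $\le 3$ in $\Irr(G|N)$ require special treatment, and this is supplied in the paper by the classification of primitive linear groups of degree $2$ and $3$ (Lemma \ref{cp}, Corollary \ref{cpc}), by \cite[Theorem 8.1]{zal} (Lemma \ref{quasi}), and by Feit's theorem \cite{feit1} ruling out the non-simple chief factor. Second, and more seriously, the ``transport'' step — that inducing faithful-on-$N$ characters from $H$ to $G$ ``can only magnify degrees'' and hence preserves the lower bound on the \emph{average} — is not a valid principle: the average over $G$ weights each character of $H$ by the number of irreducible characters of $G$ lying over it, and these multiplicities vary with the character. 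The paper controls this only because Corollary \ref{cpc} forces the very rigid structure $G=M_1C$ (central product with $M_1\cong\SL_2(5)$), after which Lemma \ref{cp0} and Gallagher give $n_d(G|N)=\sum_t n_t(C|N)\,n_{d/t}(M_1|N)$ with the $M_1$-degrees $2,2,4,6$ appearing with \emph{equal} multiplicities, and a separate reality/rationality argument (Remark \ref{rem1}, Lemma \ref{A5}) certifies which of the degree-$2$ constituents are $\Bbb F$-valued — in particular distinguishing $\Bbb F=\Bbb Q$, where $\SL_2(5)$ has no rational faithful character of degree $2$ and the extremal configuration is $\mathrm{Alt}_5$ with bound $4$ rather than $7/2$, a distinction your proposal does not make. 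These are the missing ideas; without them the proof does not close.
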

Throughout the paper, we denote by ${\rm Irr}_{\Bbb{F}}(G)$, ${\rm Irr}_{\Bbb{F}}^{*}(G)$, and ${\rm Irr}_{\Bbb{F},even}(G)$
the sets of $\Bbb{F}$-valued irreducible characters of $G$, $\Bbb{F}$-valued non-linear irreducible characters of $G$, and
$\Bbb{F}$-valued even degree irreducible characters of $G$, respectively. For a positive integer $d$, we denote by $n^{\Bbb{F}}_d(G)$ and $n^{\Bbb{F}}_d(G|N)$, the number of $\Bbb{F}$-valued  irreducible characters of $G$ of
degree $d$ and the number of $\Bbb{F}$-valued  irreducible characters of $\Irr(G|N)$ of  degree $d$, respectively.
For a character $\lambda$ of some subgroup of the group $G$ and for a character $\psi$ of $G$, let
${\rm Irr}_{\Bbb{F}}(\lambda^{G})$, ${\rm Irr}_{\Bbb{F}}^{*}(\lambda^{G})$, ${\rm Irr}_{\Bbb{F},even}(\lambda^{G})$, ${\rm Irr}_{\Bbb{F}}(\psi)$, ${\rm Irr}_{\Bbb{F}}^{*}(\psi)$, and
${\rm Irr}_{\Bbb{F},even}(\psi)$ be the sets of $\Bbb{F}$-valued irreducible characters of $G$ above $\lambda$, $\Bbb{F}$-valued non-linear irreducible characters of $G$ above $\lambda$,
$\Bbb{F}$-valued even degree irreducible characters of $G$ above $\lambda$, $\Bbb{F}$-valued irreducible constituents of $\psi$, $\Bbb{F}$-valued non-linear irreducible constituents of $\psi$,
and $\Bbb{F}$-valued even degree irreducible constituents of $\psi$, respectively. For a set $\Delta$ of irreducible characters of a finite group, we write $\acd_{\Bbb F}(\Delta)$,
${\rm acd}_{\Bbb{F}}^{*}(\Delta)$ and ${\rm acd}_{\Bbb{F},even}(\Delta)$ for the average degree of $\Bbb{F}$-valued elements of $\Delta$, average degree of $\Bbb{F}$-valued non-linear elements of $\Delta$, and
average degree of $\Bbb{F}$-valued even degree elements of $\Delta$, respectively. For the rest of the notation, we follow \cite{isaacs}.

\maketitle
\section{Preliminaries}
First, we present here some useful results and facts which will be used in the
proofs of the main results. As we mentioned before,  thorough this section we assume   $\Bbb{F}$ is one of the fields $\mathbb{C},\mathbb{R}$ or $ \mathbb{Q}$.
\begin{lemma}\label{Lemma 2.2} Let $N \triangleleft G$ and  $\lambda ,\psi \in \operatorname{Irr}(N)$ be $G$-invariant. Suppose that $\psi(1) > 1$ and
	$\psi$ is extendible to an element $\chi \in {\rm Irr}_{\mathbb{F}}(G)$.
	\begin{enumerate}
		\item[{\rm (i)}] If $\lambda\psi \in {\rm Irr}(N)$ and
		${\rm Irr}_{\mathbb{F},even}(\lambda^{G}) \neq \emptyset$,
		then
		$$
		\operatorname{acd}_{\mathbb{F},even}(\lambda^{G}+(\lambda\psi)^{G})\geq \alpha_{\psi}(\psi(1)+1)/2,
		$$where $\alpha_{\psi}={\rm gcd}(2,\psi(1)+1)$. In particular, $$
		\operatorname{acd}_{\mathbb{F},even}(\lambda^{G}+(\lambda\psi)^{G})>(\psi(1)+1)/2.
		$$
		\item[{\rm (ii)}]  If $\lambda\psi \in {\rm Irr}(N)$ and
		${\rm Irr}^*_{\mathbb{F}}(\lambda^{G}) \neq \emptyset$, then $$
		\operatorname{acd}_{\mathbb{F}}^*(\lambda^{G}+(\lambda\psi)^{G})\geq\lambda(1)(\psi(1)+1)/2
		.
		$$
	\end{enumerate}
\end{lemma}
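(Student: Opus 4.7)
The plan is to reduce both parts to a single bijection between $\Irr_{\Bbb{F}}(\lambda^G)$ and $\Irr_{\Bbb{F}}((\lambda\psi)^G)$, and then compare averages by elementary inequalities.

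First I would establish the following: since $\lambda$ and $\psi$ are $G$-invariant, $\lambda\psi \in \Irr(N)$, and $\psi$ extends to $\chi \in \Irr_{\Bbb{F}}(G)$, the map $\Phi\colon\theta\mapsto\chi\theta$ is a bijection from $\Irr(\lambda^G)$ onto $\Irr((\lambda\psi)^G)$ that multiplies degrees by $\psi(1)$. The cleanest route is via projective representations: if $\rho_\lambda$ is a projective extension of $\lambda$ to $G$ with cocycle $\alpha$, then $\rho_{\lambda\psi}=\chi\cdot\rho_\lambda$ is a projective extension of $\lambda\psi$ with the same cocycle $\alpha$, because $\chi$ is an ordinary character and so contributes the trivial cocycle. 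Clifford theory then parametrizes both $\Irr(\lambda^G)$ and $\Irr((\lambda\psi)^G)$ by the same set of irreducible $\alpha^{-1}$-projective characters of $G/N$, and the correspondence is precisely $\theta\mapsto\chi\theta$. Because $\chi$ is $\Bbb{F}$-valued, $\Phi$ preserves $\Bbb{F}$-valuedness in both directions.

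With $\Phi$ in hand, set $A=\Irr_{\Bbb{F}}(\lambda^G)$ and $B=\Irr_{\Bbb{F}}((\lambda\psi)^G)$, so $|A|=|B|$ and $\sum_{\mu\in B}\mu(1)=\psi(1)\sum_{\theta\in A}\theta(1)$. Since $\psi(1)\geq 2$, every $\mu\in B$ is non-linear. For part (ii) I would split on whether $\lambda(1)\geq 2$: if so, $A$ also contains no linear characters and the average over $A\cup B$ equals $(1+\psi(1))(\sum_A\theta(1))/(2|A|)\geq(1+\psi(1))\lambda(1)/2$. If instead $\lambda(1)=1$, let $\ell$ be the number of linear characters in $A$; writing out the average and clearing denominators reduces the desired inequality to
$$2(1+\psi(1))\bigl(\sum_{\theta\in A}\theta(1)-|A|\bigr)\geq \ell(1-\psi(1)),$$
whose left side is non-negative and whose right side is non-positive (since $\psi(1)\geq 2$).

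For part (i) the split is on the parity of $\psi(1)$. If $\psi(1)$ is even, then $\alpha_\psi=1$, every element of $B$ already has even degree, and the set of $\Bbb{F}$-valued even-degree constituents is $A^e\cup B$, where $A^e:=\Irr_{\Bbb{F},even}(\lambda^G)$ is non-empty by hypothesis; combining $\sum_{A^e}\theta(1)\geq 2|A^e|$ and $\sum_{A\setminus A^e}\theta(1)\geq|A\setminus A^e|$ yields the bound $(\psi(1)+1)/2$ and in fact strictly. If $\psi(1)$ is odd, then $\alpha_\psi=2$, and $\Phi$ restricts to a bijection $A^e\to B^e$ (since $\psi(1)\theta(1)$ is even iff $\theta(1)$ is); the average is then $(1+\psi(1))\sum_{A^e}\theta(1)/(2|A^e|)\geq 1+\psi(1)=\alpha_\psi(\psi(1)+1)/2$. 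The ``in particular'' clause is immediate in both cases.

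The main obstacle is justifying $\Phi$: one has to verify that $\chi\theta$ is irreducible and that every character in $\Irr((\lambda\psi)^G)$ arises this way. Projective representation theory (or, equivalently, the theory of character triples from Isaacs' textbook, Chapter~11) handles this uniformly via the coincidence of the two cocycles. Once the bijection is in place, the rest is routine bookkeeping on degrees.
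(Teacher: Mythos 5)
Your proposal is correct and follows essentially the same route as the paper: the key step in both is the degree-scaling bijection $\theta\mapsto\chi\theta$ from $\Irr_{\Bbb F}(\lambda^G)$ onto $\Irr_{\Bbb F}((\lambda\psi)^G)$ (which the paper obtains by citing Isaacs' Theorem~6.16 and you justify via the coincidence of cocycles in the character-triple/projective-representation picture), followed by the same parity split on $\psi(1)$ for (i), the same $\lambda(1)=1$ versus $\lambda(1)>1$ split for (ii), and the same elementary estimates on sums of degrees.
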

\begin{proof} Let  ${\rm Irr}_{\mathbb{F}}(\lambda^G)=\{\chi_{1}, \ldots,
	\chi_{h}\}$. Without loss of generality, suppose that ${\rm
		Irr}_{\mathbb{F},even}(\lambda^{G})=\{\chi_{1}, \ldots, \chi_l\}$.  Since
	$\psi(1)>1$, we have $\lambda{\psi}$ and $\lambda$ are
	not $G$-conjugate and so, ${\rm Irr}((\lambda\psi)^{G}) \cap {\rm Irr}(\lambda^{G})=\emptyset$. Obviously, one of the following cases occurs:
	\\{\bf Case 1.} Suppose that $2 \mid \chi(1)$. Then, \cite[Theorem
	6.16]{isaacs} shows that $ {\rm Irr}_{\mathbb{F},even}((\lambda
	\psi)^{G})=\{\chi \chi_{i}: 1 \leq i \leq h\} . $ Hence $| {\rm
		Irr}_{\mathbb{F},even}((\lambda\psi)^{G})|=| {\rm Irr}_{\mathbb{F}}(\lambda^{G}) |=h.$
	Therefore, since we have  ${\rm Irr}((\lambda\psi)^{G}) \cap {\rm Irr}((\lambda)^{G})=\emptyset$,
	$$
	{\rm acd}_{\mathbb{F},even}(\lambda^{G}+(\lambda\psi)^{G})
	=\frac{1}{h+l}(\sum_{i=1}^{l} \chi_{i}(1)+\chi(1)
	\sum_{i=1}^{h}\chi_{i}(1))
	> (\psi(1)+1)/{2},
	$$ where the last inequality holds because $\sum_{i=1}^l \chi_i(1)+\chi(1)\sum_{i=1}^h\chi_i(1) \geq [2l+2l\psi(1)+\psi(1)(h-l)]>[\psi(1)(h+l)+(h+l)]/2$.
	\\{\bf Case 2.} Suppose that $2 \nmid \chi(1)$. Then, \cite[Theorem
	6.16]{isaacs} forces ${\rm Irr}_{\mathbb{F},even}((\lambda\psi)^{G})=\{\chi\chi_{i}:  1 \leq i \leq l\}
	$. Thus,
	$$
	{\rm
		acd}(\lambda^{G}+(\lambda\psi)^{G})=\frac{1}{2l}(\sum_{i=1}^{l}
	\chi_i(1)+\chi(1) \sum_{i=1}^{l} \chi_i(1))=(1+\chi(1))\sum_{i=1}^{l} \chi_i(1)/(2l) \geq(\psi(1)+1).$$ Now,
	the proof  of (i) is complete. Next, we are going to prove (ii). Let ${\rm Irr}^*_{\mathbb{F}}(\lambda^G)=\{\chi_{i_1},\ldots,\chi_{i_k}\}$. Recall that if $\lambda(1)>1$, then ${\rm Irr}^*_{\mathbb{F}}(\lambda^G)={\rm Irr}_{\mathbb{F}}(\lambda^G)$.
	Since $\chi(1)>1$, \cite[Theorem
	6.16]{isaacs} shows that $ {\rm Irr}^*_{\mathbb{F}}((\lambda
	\psi)^{G})=\{\chi \chi_{i}: 1 \leq i \leq h\} . $ Hence $| {\rm
		Irr}^*_{\mathbb{F}}((\lambda\psi)^{G})|=| {\rm Irr}_{\mathbb{F}}(\lambda^{G}) |=h.$
	Therefore, since  ${\rm Irr}((\lambda\psi)^{G}) \cap {\rm Irr}(\lambda^{G})=\emptyset$,
	$$
	{\rm acd}^*_{\mathbb{F}}(\lambda^{G}+(\lambda\psi)^{G})
	=\frac{1}{k+h}(\sum_{j=1}^{k} \chi_{i_j}(1)+\chi(1)
	\sum_{i=1}^{h}\chi_{i}(1))
	\geq \lambda(1)(\psi(1)+1)/{2},
	$$
	as needed in  (ii).
\end{proof}
\begin{lemma}\label{2.2} Let $N \triangleleft G$ and  $\lambda ,\psi \in \operatorname{Irr}(N)$ such that $I_G(\lambda)=I_G(\lambda\psi)\vartriangleleft G$ and $\lambda\psi \in \operatorname{Irr}(N)$. Suppose that $\psi(1) > 1$ and
	$\psi$ is extendible to an element $\chi \in {\rm Irr}_{\mathbb{F}}(G)$. If
	${\rm Irr}_{\mathbb{F}}(\lambda^{G}) \neq \emptyset$, then ${\rm acd}_{\Bbb{F}}((\lambda\psi)^{G}+\lambda^{G})\geq [G:I_G(\lambda)] \lambda(1)(\psi(1)+1)/2$.
\end{lemma}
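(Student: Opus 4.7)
The plan is to build an injection from $\operatorname{Irr}_{\mathbb{F}}(\lambda^G)$ into $\operatorname{Irr}_{\mathbb{F}}((\lambda\psi)^G)$ given by multiplication by $\chi$, and combine this with the standard Clifford lower bounds on character degrees to estimate the average. Set $T = I_G(\lambda) = I_G(\lambda\psi) \triangleleft G$ and $t = [G:T]$. Since $\psi$ extends to $\chi \in \operatorname{Irr}_{\mathbb{F}}(G)$, $\psi$ is $G$-invariant; and because $\chi|_N = \psi$ is irreducible with $\chi(1) = \psi(1)$, the restriction $\chi|_T$ is an $\mathbb{F}$-valued irreducible extension of $\psi$ to $T$.

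In $T$ both $\lambda$ and $\psi$ are $T$-invariant, so the same Gallagher-type argument used in the proof of the previous lemma (via \cite[Theorem 6.16]{isaacs}) provides a bijection $\eta \mapsto (\chi|_T) \eta$ from $\operatorname{Irr}(T \mid \lambda)$ onto $\operatorname{Irr}(T \mid \lambda\psi)$ scaling degrees by $\psi(1)$. By the Clifford correspondence (since $T = I_G(\lambda) = I_G(\lambda\psi)$), induction gives bijections $\operatorname{Irr}(T \mid \lambda) \to \operatorname{Irr}(G \mid \lambda)$ and $\operatorname{Irr}(T \mid \lambda\psi) \to \operatorname{Irr}(G \mid \lambda\psi)$, each scaling degrees by $t$. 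Using the projection identity $\chi \cdot \eta^G = ((\chi|_T)\eta)^G$, these combine into a degree-multiplying bijection $\Phi \colon \operatorname{Irr}(G \mid \lambda) \to \operatorname{Irr}(G \mid \lambda\psi)$, $\Phi(\xi) = \chi\xi$, with $\Phi(\xi)(1) = \psi(1)\xi(1)$. Because $\chi$ is $\mathbb{F}$-valued, $\Phi$ restricts to an injection $\operatorname{Irr}_{\mathbb{F}}(G \mid \lambda) \hookrightarrow \operatorname{Irr}_{\mathbb{F}}(G \mid \lambda\psi)$.

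Let $A = |\operatorname{Irr}_{\mathbb{F}}(\lambda^G)|$, so $A \geq 1$, and $B = |\operatorname{Irr}_{\mathbb{F}}((\lambda\psi)^G)|$; the injection forces $A \leq B$. Every character above $\lambda$ has degree at least $t\lambda(1)$, and every character above $\lambda\psi$ has degree at least $t\lambda(1)\psi(1)$, so the combined sum of degrees of the $\mathbb{F}$-valued characters is at least $t\lambda(1)(A + B\psi(1))$, whence
\[
\operatorname{acd}_{\mathbb{F}}\bigl((\lambda\psi)^G + \lambda^G\bigr) \geq \frac{t\lambda(1)(A + B\psi(1))}{A+B} \geq \frac{t\lambda(1)(\psi(1)+1)}{2},
\]
the last inequality being equivalent to $(A - B)(1 - \psi(1)) \geq 0$, which holds because $A \leq B$ and $\psi(1) > 1$. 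The main technical point is the construction of $\Phi$ when $\lambda$ fails to be $G$-invariant; this is handled by the detour through $T$, where both $\lambda$ and $\psi$ are invariant and the Gallagher-type bijection of the previous lemma applies directly. Once $\Phi$ is realized as multiplication by the $\mathbb{F}$-valued character $\chi$, the preservation of $\mathbb{F}$-valuedness and the degree-scaling are transparent.
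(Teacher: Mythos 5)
Your proof is correct and follows essentially the same route as the paper: extend $\psi$ to $\chi|_T$ on $T=I_G(\lambda)$, apply the Gallagher/Clifford machinery to get the degree-scaling correspondence $\xi\mapsto\chi\xi$ between ${\rm Irr}(G\mid\lambda)$ and ${\rm Irr}(G\mid\lambda\psi)$, and bound all degrees below by $[G:T]\lambda(1)$. The only (harmless) deviation is that the paper uses normality of $T$ to show the correspondence restricts to a \emph{bijection} on the $\mathbb{F}$-valued characters, whereas you settle for the injection $A\le B$ and recover the bound via the weighted-average inequality $(A-B)(1-\psi(1))\ge 0$; both arguments are valid.
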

\begin{proof}
	Since
	$\psi(1)>1$, we have $\lambda{\psi}$ and $\lambda$ are
	not $G$-conjugate and so, ${\rm Irr}((\lambda\psi)^{G}) \cap {\rm Irr}(\lambda^{G})=\emptyset$. Set $T=I_G(\lambda)$. Let ${\rm Irr}(\lambda^T)=\{\mu_1,\ldots,\mu_n\}$. Then, ${\rm Irr}(\lambda^G)=\{\mu_i^G: 1 \leq i\leq n\}$, using Clifford correspondence. Without loss of generality, we can assume that ${\rm Irr}_{\mathbb{F}}(\lambda^G)=\{\mu_1^G,\ldots,\mu_m^G\}$
	for some positive integer $m \leq n$.   On the other hand, we can see easily  that  $\psi $ is extendible to $\rho \in {\rm Irr}_{\mathbb{F}}(T)$ such that $I_{G}(\rho)=G$. In light of \cite[Theorem 6.16]{isaacs} and Clifford correspondence, ${\rm Irr}((\lambda\psi)^{T})=\{\mu_i\rho:1 \leq i \leq n\}$ and ${\rm Irr}((\lambda\psi)^{G})=\{(\mu_i\rho)^{G}:1 \leq i \leq n\}$.  On the other hand,  $I_{G}(\rho)=G$ and $T\vartriangleleft G$, which forces $((\mu_i\rho)^{G})_{T}=((\mu_i^{G})_{T})\rho$ for every $1 \leq i \leq n$ and  $\mu_i^{G}(x)=(\mu_i\rho)^{G}(x)=0$ for every $x \in G-T$. Therefore, $(\mu_i\rho)^{G} \in {\rm Irr}_{\mathbb{F}}((\lambda\psi)^{G})$ if and only if $\mu_i^{G} \in {\rm Irr}_{\mathbb{F}}(\lambda^{G})$. Consequently,   $${\rm Irr}_{\mathbb{F}}((\lambda\psi)^{G})=\{(\mu_i\rho)^{G}:1 \leq i \leq m\}.$$ This yields that  ${\rm acd}_{\mathbb{F}}((\lambda\psi)^{G}+\lambda^{G})
	=\Sigma_{i=1}^m(\mu_i^{G}(1)\rho(1)+\mu_i^{G}(1))
	/2m \geq [G:I_G(\lambda)] \lambda(1)(\psi(1)+1)/2$, as desired.
\end{proof}
\begin{lemma} \label{145}
	Let $N$ be a normal subgroup of $G$, $M/N$ be a non-abelian chief
	factor of $G$ and $\lambda \in {\rm Irr}(N)$.
	If $\lambda $ is extendible to $\lambda_0 \in {\rm Irr}(M)$, then $I_G(\lambda)=I_G(\lambda_0)$.
\end{lemma}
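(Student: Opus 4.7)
The plan is to show the two inclusions of $I_G(\lambda)=I_G(\lambda_0)$ separately. The inclusion $I_G(\lambda_0)\subseteq I_G(\lambda)$ is essentially automatic: for any $g\in I_G(\lambda_0)$, restriction to $N$ is $G$-equivariant, so $\lambda^g=((\lambda_0)_N)^g=(\lambda_0^g)_N=(\lambda_0)_N=\lambda$. The real content lies in the reverse inclusion, for which I will use Gallagher's theorem together with the structure of non-abelian chief factors.

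Suppose $g\in I_G(\lambda)$. Then $\lambda_0^g$ is again a character of $M$ whose restriction to $N$ equals $\lambda^g=\lambda$, so $\lambda_0^g$ is another extension of $\lambda$ to $M$. By Gallagher's theorem (Corollary 6.17 of Isaacs), the extensions of $\lambda$ to $M$ are precisely the characters $\lambda_0\beta$ with $\beta\in\Irr(M/N)$ (viewed as a character of $M$ with $N$ in its kernel). Hence there exists $\beta_g\in\Irr(M/N)$ with $\lambda_0^g=\lambda_0\beta_g$. Comparing degrees gives $\lambda_0(1)=\lambda_0^g(1)=\lambda_0(1)\beta_g(1)$, so $\beta_g$ must be a linear character of $M/N$.

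Now I invoke the hypothesis that $M/N$ is a non-abelian chief factor of $G$: such a factor is a direct product of isomorphic non-abelian simple groups, so $(M/N)'=M/N$, and consequently the only linear character of $M/N$ is the trivial one. Thus $\beta_g=1_{M/N}$, yielding $\lambda_0^g=\lambda_0$, i.e.\ $g\in I_G(\lambda_0)$. This closes the reverse inclusion and completes the argument.

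I do not anticipate a serious obstacle; the proof really is this short once one combines Gallagher's parametrization of extensions with the perfectness of a non-abelian chief factor. The only subtlety worth flagging is the check that $\beta_g$ really lies in $\Irr(M/N)$ (equivalently, that $N\subseteq\ker\beta_g$), which follows because both $\lambda_0$ and $\lambda_0^g$ restrict to $\lambda$ on $N$, forcing $(\beta_g)_N=1_N$.
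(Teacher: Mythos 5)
Your proof is correct and follows essentially the same route as the paper: Gallagher's theorem identifies $\lambda_0^g$ as $\lambda_0\beta_g$ for some $\beta_g\in\Irr(M/N)$, and the perfectness of a non-abelian chief factor forces $\beta_g=1_{M/N}$. The only cosmetic imprecision is your phrasing that the $\lambda_0\beta$ are "the extensions" of $\lambda$ (Gallagher parametrizes all irreducible constituents of $\lambda^M$; only the linear $\beta$ give extensions), but your subsequent degree comparison makes this harmless.
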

\begin{proof} As $(\lambda_0)_N=\lambda$, we have $I_G(\lambda_0) \leq I_G(\lambda)$. By Gallagher's theorem  \cite[Corollary 6.17]{isaacs}, ${\rm Irr}(\lambda^M)=\{\lambda_0 \theta : \theta \in {\rm Irr}(M/N)\}$.  On the other hand,  $\lambda_0^g \in {\rm Irr}(\lambda^M)$ for an arbitrary element $g \in I_G(\lambda)$. Hence, $\lambda_0^g=\lambda_0 \psi$ for some $\psi \in {\rm Irr}(M/N)$. Since $M/N$ is a non-abelian chief factor of $G$, $\theta(1) >1$ for every $\theta \in {\rm Irr}(M/N)-\{1_{M/N}\}$.  This shows that $\psi=1_{M/N}$. Consequently, $\lambda_0^g=\lambda_0$ which means that  $g\in I_G(\lambda_0)$. Thus, $I_G(\lambda) \leq I_G(\lambda_0)$. Therefore, $I_G(\lambda_0) = I_G(\lambda)$, as wanted.
\end{proof}
\begin{lemma}\label{1109} Let $N$ be a normal subgroup of $G$ and $\lambda\in {\rm Irr}(N)$. If $T=I_G(\lambda)$ and $\lambda$ is extendible to $\lambda_0 \in {\rm Irr}(T)$, then $I_G(\lambda_0\psi)=T$ for every $\psi \in {\rm Irr}(T/N)$.
\end{lemma}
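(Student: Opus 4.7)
The plan is to verify the two inclusions $I_G(\lambda_0\psi)\leq T$ and $T\leq I_G(\lambda_0\psi)$ separately, with the non-trivial direction handled by restricting to $N$.

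For the non-trivial inclusion, I would take an arbitrary $g\in I_G(\lambda_0\psi)$ and restrict the identity $(\lambda_0\psi)^g=\lambda_0\psi$ to $N$. Because $N\triangleleft G$, conjugation by $g$ sends characters of $N$ to characters of $N$, and commutes with restriction from $T$ to $N$ (since $g$ also normalises $T$, being in the inertia group). The hypothesis that $\lambda_0$ extends $\lambda$ gives $(\lambda_0)_N=\lambda$, while $\psi\in\operatorname{Irr}(T/N)$ forces $\psi|_N=\psi(1)\cdot 1_N$. Multiplying, $(\lambda_0\psi)_N=\psi(1)\lambda$, and so the restricted equation reads $\psi(1)\lambda^g=\psi(1)\lambda$. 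Cancelling $\psi(1)\neq 0$ yields $\lambda^g=\lambda$, i.e.\ $g\in I_G(\lambda)=T$.

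For the reverse inclusion, I would simply observe that any character of $T$ is a class function on $T$, hence invariant under conjugation by elements of $T$ itself. Thus any $g\in T$ fixes both $\lambda_0$ and $\psi$, and obviously normalises $T$, so $(\lambda_0\psi)^g=\lambda_0\psi$ and $g\in I_G(\lambda_0\psi)$. Combining the two inclusions gives the equality.

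The only real subtlety is that $T=I_G(\lambda)$ need not be normal in $G$, so one must interpret $I_G(\lambda_0\psi)$ correctly as the stabiliser of $\lambda_0\psi$ inside $N_G(T)$; but since any element of the inertia group automatically normalises $T$, this causes no trouble. The essential point driving the argument is that $\psi$, being inflated from $T/N$, restricts to a scalar on $N$, so stabilising the product $\lambda_0\psi$ is equivalent (after restriction) to stabilising $\lambda$.
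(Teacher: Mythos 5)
Your proof is correct. The paper takes a slightly different route: it first invokes Gallagher's theorem to see that $\lambda_0\psi\in{\rm Irr}(\lambda^{T})$, then the Clifford correspondence to conclude that $(\lambda_0\psi)^{G}$ is irreducible, and from the irreducibility of the induced character it reads off that the stabiliser of $\lambda_0\psi$ in $N_G(T)$ cannot exceed $T$. You bypass both Gallagher and Clifford by restricting to $N$ directly, computing $(\lambda_0\psi)_N=\psi(1)\lambda$ and cancelling $\psi(1)$; this is more elementary, and the two arguments ultimately rest on the same fact, namely that $\lambda$ is (up to the scalar $\psi(1)$) the whole of $(\lambda_0\psi)_N$ and that $T$ is the full inertia group of $\lambda$. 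Your explicit remark about how to interpret $I_G(\lambda_0\psi)$ when $T$ is not normal in $G$ --- as the stabiliser inside $N_G(T)$, every element of which automatically normalises $T$ --- addresses a point the paper leaves implicit, and your observation that the trivial inclusion $T\leq I_G(\lambda_0\psi)$ is just the class-function property is exactly right.
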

\begin{proof} By \cite[Corollary 6.17]{isaacs}, $\lambda_0\psi \in {\rm Irr}(\lambda^{T})$. It follows from Clifford correspondence that $(\lambda_0\psi)^{G} \in {\rm Irr}(\lambda^{G})$. This implies that
	$I_G(\lambda_0\psi)=T$, as claimed.
\end{proof}
For a non-empty subset of real numbers $X$, by ${\rm ave}(X)$ we mean the average of $X$. The following lemma about averages is  well known and its proof is easy.
\begin{lemma} \label{later} {\rm \cite[Lemma 3]{ahanjideh}} Let $X$ be a non-empty subset of real numbers and $\{A_1,\ldots,A_t\}$ be a partition of $X$. If there exists an integer $d$ such that  ${\rm ave}( A_i) \geq  d$ $(\leq d)$ for every $1 \leq i \leq t$, then ${\rm ave}( X) \geq d$  $(\leq d)$. \end{lemma}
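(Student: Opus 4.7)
The plan is to turn the averages into sums by multiplying through by the sizes of the parts, add up the resulting inequalities, and then divide by the size of $X$. Concretely, for each $1 \leq i \leq t$ I would set $n_i = |A_i|$ and $S_i = \sum_{x \in A_i} x$, so that ${\rm ave}(A_i) = S_i/n_i$; the hypothesis then rewrites as $S_i \geq d\,n_i$ (respectively $S_i \leq d\,n_i$) for each $i$. Since all the $n_i$ are positive (each $A_i$ is non-empty as a block of a partition; if some $A_i$ were empty one simply drops it from the partition without affecting the averages), there is no sign issue in this rescaling.

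Next I would sum the inequalities over $i$ and use the partition hypothesis. Because $\{A_1,\ldots,A_t\}$ partitions $X$, one has $\sum_{i=1}^{t} n_i = |X|$ and $\sum_{i=1}^{t} S_i = \sum_{x \in X} x$. Adding the inequalities $S_i \geq d\,n_i$ therefore yields
$$\sum_{x \in X} x \;=\; \sum_{i=1}^{t} S_i \;\geq\; d \sum_{i=1}^{t} n_i \;=\; d\,|X|.$$
Dividing by the positive integer $|X|$ gives ${\rm ave}(X) \geq d$, which is what is required. The $\leq$ version is obtained either by reversing every inequality in the argument above or, more slickly, by applying the $\geq$ case to the set $-X = \{-x : x \in X\}$ with partition $\{-A_1,\ldots,-A_t\}$ and bound $-d$.

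There is really no main obstacle here; this is a one-line manipulation of weighted averages. The only point that merits a brief comment in the write-up is the harmless assumption that each $n_i>0$, which is automatic from the convention that a partition consists of non-empty blocks. I would therefore keep the proof to a short paragraph and move on.
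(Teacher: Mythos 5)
Your proof is correct and is precisely the standard weighted-average argument that the paper has in mind (the paper itself omits the proof, citing it as well known and easy). Nothing to add.
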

\begin{lemma} \label{Lemma 4.1}{\rm \cite{f} and \cite[Lemma 4.1]{nav}} Let $S$ be a
	finite non-abelian simple group. Then, there exists a
	non-principal irreducible character of $S$ that is extendible to a
	rational-valued character of ${\rm Aut}(S)$.
\end{lemma}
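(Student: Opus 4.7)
The plan is to invoke the Classification of Finite Simple Groups and exhibit, for each family of non-abelian finite simple groups $S$, an explicit non-principal $\chi \in {\rm Irr}(S)$ together with a rational-valued extension $\widetilde{\chi} \in {\rm Irr}({\rm Aut}(S))$. Since the result is attributed to Feit and to Navarro, this will amount to reassembling the standard case-by-case verification.

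For the alternating groups $A_n$ with $n \geq 5$, I would use the deleted permutation character of degree $n-1$. This is an irreducible rational-valued character of $S_n$ (every character of a symmetric group is rational), and its restriction to $A_n$ remains irreducible for $n \geq 5$; since ${\rm Aut}(A_n)=S_n$ for $n \neq 6$, this does the job. The case $n=6$ has to be handled separately: here ${\rm Aut}(A_6)=P\Gamma L_2(9)$ contains $S_6$, and I would verify by direct inspection of the ATLAS that the $5$-dimensional character of $A_6$ extends to a rational character of $P\Gamma L_2(9)$. For the $26$ sporadic simple groups I would likewise consult the ATLAS, in each case picking a rational-valued irreducible character of ${\rm Aut}(S)$ whose restriction to $S$ is non-principal and irreducible (for most sporadic groups one may just take the smallest nontrivial faithful character). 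For groups of Lie type, the natural choice is the Steinberg character ${\rm St}_S$: it has degree $|S|_p$, is rational-valued, and is stable under diagonal, field, and graph automorphisms, hence extends to ${\rm Aut}(S)$; the rationality of this extension is classical and follows from the construction of ${\rm St}_S$ as the alternating sum $\sum_{P}(-1)^{|\Delta \setminus \Delta_P|}1_P^G$ over standard parabolics, which is visibly defined over $\mathbb{Q}$ and stable under the automorphisms of $S$.

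The main obstacle is to ensure uniform treatment of the Steinberg extension across all Lie type families, including the twisted and very small rank groups where coincidences (such as $A_8 \cong {\rm PSL}_4(2)$, $A_5\cong {\rm PSL}_2(4)\cong {\rm PSL}_2(5)$, or ${\rm PSL}_2(7)\cong {\rm PSL}_3(2)$) or issues with graph/field automorphisms (notably for ${\rm PSL}_2(q)$, $\mathrm{Sp}_4(2^f)$, $F_4(2^f)$, and ${\rm P\Omega}_8^+(q)$ with triality) require separate bookkeeping; one must check that the chosen extension remains rational even under these outer automorphisms. Once these verifications are collected the lemma is established; as the statement is cited directly from \cite{f} and \cite[Lemma~4.1]{nav}, a full proof is not needed in the present paper beyond the reference.
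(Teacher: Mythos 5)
The paper does not prove this lemma at all: it is quoted with citations to Feit \cite{f} and Navarro--Tiep \cite[Lemma 4.1]{nav}, and your proposal is essentially a reconstruction of the argument in those references (deleted permutation character for the alternating groups, ATLAS inspection for the sporadic groups, Steinberg character plus Feit's extension theorem for the groups of Lie type). That is the right overall route.

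There is, however, one step that fails as written, namely the case $S={\rm Alt}_6$. The two irreducible characters of ${\rm Alt}_6\cong{\rm PSL}_2(9)$ of degree $5$ are the two characters of degree $(q+1)/2$, and they are interchanged by the diagonal automorphism, i.e.\ they are already fused in ${\rm PGL}_2(9)$. Hence the inertia group of either one in ${\rm Aut}({\rm Alt}_6)={\rm P\Gamma L}_2(9)$ is the index-two subgroup ${\rm Sym}_6$, and neither extends to ${\rm Aut}({\rm Alt}_6)$; the ATLAS check you propose would come back negative. The repair is immediate: take instead the Steinberg character of degree $9$ (or the character of degree $10$), which is rational, ${\rm Aut}({\rm Alt}_6)$-invariant, and extends to a rational-valued irreducible character of ${\rm P\Gamma L}_2(9)$. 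Two smaller cautions. For the sporadic groups the ``smallest nontrivial faithful character'' heuristic also fails whenever the minimal degree occurs twice with the pair swapped by the outer automorphism (e.g.\ the two characters of degree $11$ of $M_{12}$), so the ATLAS verification genuinely must be done group by group, as you half-acknowledge. For the groups of Lie type, ${\rm Aut}(S)$-invariance of ${\rm St}$ together with rationality of its values on $S$ does not by itself yield a rational-valued irreducible extension to ${\rm Aut}(S)$ --- there are a cohomological obstruction and a field-of-values question to settle, and that is precisely the content of Feit's theorem \cite{f}; your appeal to the alternating-sum construction being ``visibly defined over $\mathbb{Q}$'' should be replaced by the citation, which you do in fact give.
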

\begin{lemma}\label{Lemma 2.7}{\rm\cite[Lemma 2.7]{Qian}} Let $D$ be a minimal normal subgroup of $G$ and
	$r$ be a prime divisor of $|D|$. If $D$ is a direct product of $s$
	isomorphic non-abelian simple groups, then there exists $\psi \in
	{\rm Irr}(D)$ of degree at least $(r-1)^{s}$ such that $\psi$ is
	extendible to $G$, and if in addition $s \geq 2$, then $\chi(1)
	\geq r-1$ whenever $\chi \in {\rm Irr}(G| D) .$
\end{lemma}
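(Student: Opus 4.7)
The plan is to reduce the whole statement to a result about the simple group $S$: namely, that $S$ admits a non-principal irreducible character $\theta$ of degree at least $r-1$ that is extendible to a character of $\Aut(S)$. Granted this, I set
$$\psi \;=\; \theta\otimes\theta\otimes\cdots\otimes\theta \;\in\;\Irr(D),$$
so that $\psi(1)=\theta(1)^{s}\geq(r-1)^{s}$, and then I need only check that $\psi$ extends to $G$.

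For the extension step, I would use that, since $D=S_{1}\times\cdots\times S_{s}$ is a direct product of non-abelian simple groups, $\Center(D)=1$ and hence $D\cap\Centralizer_{G}(D)=1$. Consequently $G/\Centralizer_{G}(D)$ embeds into $\Aut(D)=\Aut(S)\wr S_{s}$ as an overgroup of the diagonal image of $D$. The extension of $\theta$ to $\Aut(S)$ yields, by the standard Clifford-theoretic construction for wreath products (extend on each factor and then by permutation action on tensor slots), an extension of $\psi=\theta^{\otimes s}$ to the entire wreath product $\Aut(S)\wr S_{s}$. Restricting this to the image of $G$ and pulling it back through the quotient $G\to G/\Centralizer_{G}(D)$ (tensored with the trivial character of $\Centralizer_{G}(D)$) gives the desired extension $\widetilde{\psi}\in\Irr(G)$.

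For the second assertion, assume $s\geq 2$ and let $\chi\in\Irr(G\mid D)$. Clifford theory applied to $D\triangleleft G$ yields $\chi_{D}=e(\varphi_{1}+\cdots+\varphi_{t})$, where the $\varphi_{j}$ are the distinct $G$-conjugates of some $\varphi\in\Irr(D)\setminus\{1_{D}\}$, so $\chi(1)\geq t\,\varphi(1)$. Writing $\varphi=\theta_{1}\otimes\cdots\otimes\theta_{s}$ with $\theta_{i}\in\Irr(S_{i})$ and letting $k$ be the number of indices $i$ with $\theta_{i}\neq 1_{S_{i}}$, I distinguish two ranges: if some $\theta_{i}$ already has degree $\geq r-1$, then $\varphi(1)\geq r-1$ and the bound on $\chi(1)$ follows. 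Otherwise every non-trivial $\theta_{i}$ has degree strictly less than $r-1$, and I would combine the crude estimate $\varphi(1)\geq 2^{k}$ with the fact that, by minimality of $D$, $G$ acts transitively (indeed primitively) on $\{S_{1},\ldots,S_{s}\}$, to force the orbit $t=[G:I_{G}(\varphi)]$ to be comparable to $\binom{s}{k}$; a short case split on whether $2^{k}\geq r-1$ or $\binom{s}{k}\geq r-1$ then closes the argument.

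The main obstacle is the simple-group-theoretic input in the very first step: for every non-abelian simple group $S$ and every prime $r\mid|S|$, producing an $\Aut(S)$-extendible irreducible character of degree at least $r-1$. This strengthens Lemma \ref{Lemma 4.1} by attaching a precise degree lower bound, and its verification requires the classification of finite simple groups handled family by family—alternating groups via standard Young-diagram characters, sporadic groups by inspection of the \textsc{Atlas}, and groups of Lie type by examining the Steinberg character, unipotent characters, or principal series constituents according to whether $r$ is the defining characteristic. The value $r-1$ is essentially tight (it is matched by the minimal non-trivial degrees in several Lie-type families), so the delicate part is retaining $\Aut(S)$-invariance while meeting this bound.
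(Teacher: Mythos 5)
First, a remark on the comparison itself: the paper does not prove this lemma but imports it from \cite{Qian}, so your proposal has to be measured against the standard argument there. Your treatment of the first assertion is essentially that standard argument, and indeed it is the same transversal/wreath-product construction the authors themselves carry out in Lemma \ref{us1}: everything reduces to producing, for each prime $r\mid|S|$, a non-principal $\theta\in\Irr(S)$ of degree at least $r-1$ that extends to $\Aut(S)$, and this is true (the Steinberg character handles Lie type in any characteristic, the degree-$(n-1)$ character handles $\mathrm{Alt}_n$, and the Atlas handles the sporadic and small alternating cases). You correctly identify this CFSG input as the crux, though you only sketch its verification.

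The second assertion is where there is a genuine gap. Your case split ``$2^{k}\ge r-1$ or $\binom{s}{k}\ge r-1$'' cannot close, because both quantities are independent of $r$: take $s=2$, $k=1$, and $r$ the largest prime divisor of $|S|$ (for instance $S\cong\PSL_2(7)$, $r=7$, $\theta_1$ of degree $3$); then $2^{k}=\binom{s}{k}=2<6=r-1$, yet the conclusion $\chi(1)\ge 6$ still has to be proved. The missing ingredient is a degree bound that actually involves $r$. One repair is the Feit--Thompson theorem \cite{feit}: a non-abelian simple group of order divisible by $r$ has no non-trivial irreducible character of degree less than $(r-1)/2$; then for $k=1$ one gets $\chi(1)\ge t\,\varphi(1)\ge 2\cdot(r-1)/2=r-1$ (here $t\ge2$ because $I_G(\varphi)$ normalizes the unique factor supporting $\varphi$ and $G$ is transitive on the $s\ge2$ factors), while for $k\ge2$ one has $\varphi(1)\ge((r-1)/2)^{2}\ge r-1$ when $r\ge5$ and $\varphi(1)\ge 9>r-1$ when $r\le3$. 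The cleaner route, and the one that explains the hypothesis $s\ge2$, is Feit's theorem \cite{feit1}: if some $\chi\in\Irr(G|D)$ had $\chi(1)<r-1$, then $G/\ker\chi$ would have a faithful representation of degree less than $r-1$, forcing a maximal subgroup $Q_0$ of a Sylow $r$-subgroup $Q$ of $G/\ker\chi$ to be normal; since $Q_0\cap\overline{D}$ is a normal $r$-subgroup of the minimal normal subgroup $\overline{D}\cong D$, it is trivial, whence $|D|_r\le[Q:Q_0]=r$, contradicting $|D|_r\ge r^{s}\ge r^{2}$. Two smaller inaccuracies in your text: minimality of $D$ gives transitivity, not primitivity, of $G$ on $\{S_1,\dots,S_s\}$, and the $G$-orbit of the support of $\varphi$ need only have size at least $s/k$, not $\binom{s}{k}$.
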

\begin{lemma} \label{Lemma 2.6} Let $W $ be a normal subgroup of $ G$ which  is  a non-abelian simple group. Then, there exists a character  $\psi \in {\rm Irr}(W)$  that is extendible to a character $\chi \in {\rm Irr}_{\mathbb{Q}}(G)$ such that either  $\psi(1) \geq 8$ or  $W  \cong {\rm PSL}_2(5)$ and $\psi(1)=5$.
\end{lemma}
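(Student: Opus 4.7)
The plan is to reduce the question to finding an extension over $\Aut(W)$ and then to carry out a case analysis on the non-abelian simple groups. For the reduction, set $C=\Centralizer_G(W)$; since $W$ is non-abelian simple, $W\cap C=\Center(W)=1$, so conjugation embeds $G/C$ into $\Aut(W)$, and the image contains $WC/C\cong W$, naturally identified with $\mathrm{Inn}(W)$. If some $\psi\in\Irr(W)$ extends to $\chi_0\in\Irr_{\mathbb{Q}}(\Aut(W))$, then the restriction $\chi_0|_{G/C}$ is rational-valued and restricts further to $\psi$ on $W$, so it is already irreducible; inflating along $G\twoheadrightarrow G/C$ yields $\chi\in\Irr_{\mathbb{Q}}(G)$ with $\chi_W=\psi$. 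It therefore suffices to produce, for each non-abelian simple group $W$, a character $\psi\in\Irr(W)$ extendible to a rational character of $\Aut(W)$ with $\psi(1)\geq 8$, except when $W\cong\PSL_2(5)$, in which case $\psi(1)=5$ is allowed.

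For the groups of Lie type I would default to the Steinberg character $\mathrm{St}$: it is rational-valued, stable under all graph, field, and diagonal automorphisms (so extends to $\Aut(W)$), and has degree $|W|_p$ with $p$ the defining characteristic. Among simple groups of Lie type, the only $W$ with $|W|_p<8$ are $\PSL_2(q)$ for $q\in\{4,5,7\}$; here $\PSL_2(4)\cong\PSL_2(5)\cong A_5$ is the stated exception, while $\PSL_2(7)$ is handled separately from its character table, which contains a rational character of degree $8$ that extends to $\PGL_2(7)=\Aut(\PSL_2(7))$. For alternating groups $A_n$ with $n\geq 9$, the standard character of degree $n-1$ is rational, extends to $S_n$, and has degree at least $8$; the three remaining cases $A_6, A_7, A_8$ are verified directly (for instance the Steinberg characters of $A_6\cong\PSL_2(9)$ and $A_8\cong\PSL_4(2)$ have degrees $9$ and $64$). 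For the sporadic simple groups, $|\Out(W)|\leq 2$ and an ATLAS inspection provides a rational extendible character of degree well above $8$ in each case.

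The main obstacle is not existence: Lemma 4.1 already guarantees a rational extendible non-principal character in every non-abelian simple group, so the existential step is free. The real work lies in refining the degree bound to $\geq 8$ and showing that only $\PSL_2(5)$ genuinely falls into the small-degree regime. This requires a careful hand-check for $\PSL_2(7)$ and for the smallest alternating groups, where the Steinberg or standard characters are either too small or unavailable and must be replaced by another rational extendible candidate.
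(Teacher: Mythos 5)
Your proposal is correct and follows essentially the same route as the paper: reduce via $C=\Centralizer_G(W)$ to producing a rational extension over $\Aut(W)$, use the Steinberg character (Feit's extension theorem) for Lie type with $\PSL_2(5)$ and $\PSL_2(7)$ as the small-degree exceptions, and handle alternating and sporadic groups via Lemma \ref{Lemma 4.1} together with an ATLAS check of $A_6$, $A_7$, $A_8$. The only cosmetic difference is your use of the standard degree-$(n-1)$ character for $A_n$, $n\geq 9$, where the paper invokes the maximal-degree extendible character from Lemma \ref{Lemma 4.1}.
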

\begin{proof} Let $C=C_G(W)$. Then, $C \cap W ={\bf Z}(W)=1$. Thus,  $W \cong WC/C\unlhd G/C \leq  {\rm Aut}(W) $. So, it is enough to show that there exists a character  $\psi \in {\rm Irr}(W)$ which is extendible  to a character  $\chi \in {\rm Irr}_{\mathbb{Q}}({\rm Aut}(W))$  such that   either $\psi(1) \geq 8$ or $W \cong {\rm PSL}_2(5)$ and $\psi(1)=5$.  Suppose that  $W$ is a simple group of Lie type over a field of order $q$, where $q$ is a power of a prime $p$.  Let ${\rm St}$ be the Steinberg character of ${W}$. Then, by \cite{f},  ${\rm St}$ is extendible to a character $\chi \in {\rm Irr}_{\mathbb{Q}}({\rm Aut}(W))$ and ${\rm St}(1)=|W|_p$. If $W \cong {\rm PSL}_2(5)$, then ${\rm St}(1)=5$. If $W \cong {\rm PSL}_2(7)$, then  Atlas \cite{atlas} guarantees the existence of $\psi \in {\rm Irr}(W)$ that is extendible to ${\rm Irr}_{\mathbb{Q}}({\rm Aut}(W))$ and its degree is $8$.  If $W \not \cong {\rm PSL}_2(5),{\rm PSL}_2(7)$, then  ${\rm St}(1) \geq 8$, as wanted. Next assume that $W \cong {\rm Alt}_n$ ($n \geq 6$) or $W$ is a Sporadic simple group. By Lemma \ref{Lemma 4.1}, there exists a
	non-principal irreducible character $\psi$ of $W$ of maximal  degree  that is extendible to a
	rational-valued character of ${\rm Aut}(W)$. If $W \cong {\rm Alt}_n$  ($n \geq 9$) or $W$ is a Sporadic simple group, then $\psi(1)  \geq 8$. If $W \cong {\rm Alt}_n$ ($6 \leq n \leq 8$), we get from Atlas \cite{atlas}  that there  exists $\psi \in {\rm Irr}(W)$ which is extendible to ${\rm Irr}_{\mathbb{Q}}({\rm Aut}(W))$ and its degree is $8$.   Now, the proof is complete.
\end{proof}
\begin{Remark} \label{note1} Let $W $ be a normal subgroup of $ G$ which  is  a non-abelian simple group. Assume that $\psi \in {\rm Irr}(W)$. As stated in the proof of Lemma \ref{Lemma 2.6}, if $\psi$ is extendible to a character in $ {\rm Irr}_{\mathbb{F}}({\rm Aut}(W))$, then $\psi $ is extendible to  a character in $ {\rm Irr}_{\mathbb{F}}(G)$.
\end{Remark}
\begin{lemma}\label{us1} Let $D$ be a minimal normal subgroup of $G$. If $D$ is a direct product of $s$ copies of a non-abelian simple group, then there exists a character  $\psi \in
	{\rm Irr}(D)$ of degree at least $5^s$ such that $\psi$ is
	extendible to a character in  ${\rm Irr}_{\mathbb{F}}(G)$, and  in addition, if ${\rm
		Irr}_{\mathbb{Q},even}(G| D) \neq \emptyset$, then
	$\chi(1) \geq 4$ whenever $\chi \in {\rm
		Irr}_{\mathbb{Q},even}(G| D) $. \end{lemma}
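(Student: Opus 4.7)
The plan is to combine Lemma~\ref{Lemma 2.6} (applied to a simple factor of $D$) with a wreath-product extension argument. Write $D = W_1 \times \cdots \times W_s$ where each $W_i$ is isomorphic to a fixed non-abelian simple group $W$. By Lemma~\ref{Lemma 2.6}, pick $\psi_0 \in \Irr(W)$ with $\psi_0(1) \geq 5$ that extends to a rational-valued character $\hat{\psi}_0 \in \Irr_{\Bbb{Q}}(\Aut(W))$. Identifying each $W_i$ with $W$, set $\psi := \psi_0 \otimes \cdots \otimes \psi_0 \in \Irr(D)$, so that $\psi(1) = \psi_0(1)^s \geq 5^s$.

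Next I would show that $\psi$ extends to an $\Bbb{F}$-valued irreducible character of $G$. Since $D$ is a direct product of non-abelian simple groups, $D \cap C_G(D) = 1$ and conjugation embeds $G/C_G(D)$ into $\Aut(D) = \Aut(W) \wr S_s$, with the image of $DC_G(D)/C_G(D)$ lying inside the base group $\Aut(W)^s$ as $\mathrm{Inn}(W)^s$. The character $\hat{\psi}_0^{\otimes s}$ on $\Aut(W)^s$ is a rational-valued extension of $\psi$ and is invariant under the permutation action of $S_s$; by the standard symmetric-tensor formula for wreath products it admits a canonical rational-valued irreducible extension $\tilde{\chi}$ to $\Aut(W) \wr S_s$. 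Restricting $\tilde{\chi}$ to $G/C_G(D)$ produces a character $\chi'$ of degree $\psi(1)$ whose restriction to $D$ equals $\psi$; since $\psi$ is $G$-invariant and irreducible, every irreducible constituent of $\chi'$ has degree $\geq \psi(1)$, so $\chi'$ itself is irreducible. Inflating to $G$ gives the desired $\chi \in \Irr_{\Bbb{F}}(G)$. (For $s=1$ this step collapses to Remark~\ref{note1}.)

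For the second claim, suppose $\chi \in \Irr_{\Bbb{Q},even}(G|D)$. By Clifford's theorem, $\chi_D$ has an irreducible constituent $\theta = \theta_1 \otimes \cdots \otimes \theta_s \neq 1_D$, so some $\theta_i$ is non-trivial. A classical observation now applies: no non-abelian simple group admits a $2$-dimensional irreducible character, because such a character would embed the group into $\mathrm{PGL}_2(\Bbb{C})$, whose finite subgroups are cyclic, dihedral, $A_4$, $S_4$, or $A_5$, and among these only $A_5$ is non-abelian simple -- yet the character degrees of $A_5$ are $1,3,3,4,5$. Hence $\theta_i(1) \geq 3$, giving $\theta(1) \geq 3$ and thus $\chi(1) \geq 3$; together with $2 \mid \chi(1)$ this forces $\chi(1) \geq 4$. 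The main obstacle is the wreath-product extension step in the first part -- verifying that the symmetric extension of $\hat{\psi}_0^{\otimes s}$ survives as a rational-valued irreducible character when pulled back to $G/C_G(D)$; once that is in place, the degree bound $5^s$ is immediate and the second assertion is the short Clifford-theoretic argument above.
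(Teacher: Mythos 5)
Your proposal is correct, and its skeleton coincides with the paper's: both choose, via Lemma~\ref{Lemma 2.6}, a character $\psi_0$ of degree at least $5$ on one simple factor that extends rationally to $\Aut(W)$, both take $\psi=\psi_0\otimes\cdots\otimes\psi_0$ to get the bound $5^s$, and both prove the second assertion by noting that a non-trivial constituent of $\chi_D$ restricted to a simple factor has degree at least $3$, so that $\chi(1)\geq 3$ together with $2\mid\chi(1)$ gives $\chi(1)\geq 4$. Where you genuinely diverge is the extension step. The paper follows \cite[Lemma 5.1]{Bian}: it works inside $H=N_G(D_1)$, extends $\theta_1\times 1_C$ to a rational character $\psi$ of $H/C_G(D_1)$, and obtains the extension of $\theta$ to $G$ by inducing from $H$ (citing \cite{Bian} for the fact that the induced character restricts to $D$ as $\theta$). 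You instead embed $G/C_G(D)$ into $\Aut(W)\wr S_s$, take the canonical (cycle-product) extension of $\widehat{\psi_0}^{\,\otimes s}$ to the wreath product, which is rational-valued because $\widehat{\psi_0}$ is, and restrict; irreducibility of the restriction is immediate since its further restriction to $D$ is the irreducible $\psi$ (your invariance remark is not even needed for that step, though it is what guarantees $\psi$ is $G$-invariant in the first place, via $\Aut(W)$-invariance of $\psi_0$). Your route is self-contained and arguably cleaner: it avoids reliance on the external lemma and sidesteps the degree bookkeeping of the induction construction, at the cost of invoking the standard wreath-product extension formula. Both yield a rational-valued extension, which suffices for all three choices of $\Bbb{F}$.
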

\begin{proof} Let $ D = D_1 \times \cdots \times  D_s$, where $D_1,\ldots, D_s$ are isomorphic non-abelian simple
	groups. By Lemma \ref{Lemma 4.1}, we can choose a character  $\theta_1 \in {\rm  Irr}(D_1)$ of maximal degree
	which  is extendible to a rational-valued character of ${\rm Aut}(D_1)$. If  $D_1 \not \cong {\rm PSL}_2(5)$, then we get from Lemma \ref{Lemma 2.6} that $\theta_1(1) \geq 8$. If $D_1 \cong {\rm PSL}_2(5)$, then it is enough to assume that $\theta_1$ is the  Steinberg character of $D_1$. Then, $\theta_1(1)=5$.  Nevertheless, we have $\theta_1(1) \geq 5$.
	
	Now, we follow  the idea given in \cite[Lemma 5.1]{Bian}. Let $H = N_G(D_1)$ and $C = C_G(D_1)$. Since $D$ is a minimal normal subgroup of $G$, $G$ acts transitively on $\{D_1,\ldots,D_s\}$.  So, we can assume that $T=\{x_1,\ldots , x_s\} $  is a right transversal for
	$N_G(D_1)$ in $G$ such that $D_1^{x_i} = D_i$. Let $\theta_i \in  {\rm
		Irr}(D_i)$ be such that $\theta_i(x_i^{-1} d_1x_i) =
	\theta_1(d_1)$ for every $x_i^{-1} d_1x_i \in  D_i = D_1^{x_i}$.
	Put $\theta = \theta_1 \times \cdots \times \theta_s$. Then,
	$\theta \in {\rm Irr}(D)$ and $\theta (1) \geq 5^s$.  However, $H/C \leq {\rm Aut}(D_1)$ and  $D_1 \cap  C =
	{\bf Z}(D_1) = 1$.  So, $D_1C = D_1 \times  C \leq  H$ and $\theta_1 \times 1_C \in {\rm Irr}(D_1 \times C)$. Let $T=I_G(\theta_1 \times 1_C)$. Then, $T \leq H$ and  $ \theta_1 $
	extends to $\theta_1 \times  1_ C \in {\rm Irr}(DC/C)$. So, $\theta_1 \times  1_ C$  extends to a rational-valued character $\psi$ of $H/C$. Thus $T=H$ and $\chi=\psi^{G} \in {\rm Irr}(G)$. Since $\psi \in {\rm Irr}_{\mathbb{F}}(H)$, we observe that $\chi \in {\rm Irr}_{\mathbb{F}}(G)$. Moreover, $\chi(1) \geq 5^s$ and as stated in \cite[Lemma 5.1]{Bian}, $\chi_N=\theta$, as needed.
	
	Finally, let $\chi \in {\rm Irr}_{\mathbb{F},even}(G| D) $. We assume that $\varphi$ is an irreducible constituent of $\chi_D$. Considering the irreducible character degrees of simple groups, $\varphi(1) \geq 3$. Since $2 \mid \chi(1)$ and $\varphi(1) \mid \chi(1)$, we get that $\chi(1) \geq 4$, as wanted.
\end{proof}

For proving the following corollary, we follow  the idea given in some parts of the proof of \cite[Lemma 2.7]{Qian}.
\begin{corollary}\label{corollary 2.7} Let $D$ be a minimal normal subgroup of $G$. If $D$ is non-simple and non-solvable, then $\chi(1) \geq 6$ for every $\chi \in {\rm Irr}(G|D)$.
\end{corollary}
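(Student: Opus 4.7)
The plan is to argue by Clifford theory on the character $\chi$, working with a constituent of $\chi_D$ decomposed along the simple factors of $D$.

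Since $D$ is a non-solvable, non-simple minimal normal subgroup of $G$, I would write $D=D_1\times\cdots\times D_s$ with $s\geq 2$ and the $D_i$ pairwise isomorphic non-abelian simple groups; by minimality, $G$ permutes $\{D_1,\dots,D_s\}$ transitively. Fix $\chi\in\Irr(G|D)$ and an irreducible constituent $\varphi$ of $\chi_D$. Clifford's theorem yields
\[
\chi(1)\;\geq\;[G:I_G(\varphi)]\,\varphi(1).
\]
Write $\varphi=\varphi_1\times\cdots\times\varphi_s$ with $\varphi_i\in\Irr(D_i)$. Because $D\not\leq\ker\chi$, we have $\varphi\neq 1_D$, so at least one $\varphi_i$ is non-principal, and I will rely on the standard fact that every non-trivial irreducible character of a non-abelian simple group has degree at least $3$ (no non-abelian simple group embeds into $\operatorname{GL}_2(\mathbb C)$).

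I would then split into two cases depending on the ``support'' $S=\{i:\varphi_i\neq 1_{D_i}\}$. If $S=\{1,\dots,s\}$, i.e.\ every $\varphi_i$ is non-principal, then $\varphi(1)=\prod_i\varphi_i(1)\geq 3^s\geq 9$, and therefore $\chi(1)\geq 9\geq 6$. If instead $S$ is a proper non-empty subset of $\{1,\dots,s\}$, then the key observation is that $I_G(\varphi)$ must preserve $S$ as a subset of $\{1,\dots,s\}$ under the induced action of $G$ on the factors; since $G$ acts transitively on $\{D_1,\dots,D_s\}$ and $\emptyset\neq S\neq\{1,\dots,s\}$, some element of $G$ sends an index in $S$ to one outside $S$, so $\varphi$ is not $G$-invariant and $[G:I_G(\varphi)]\geq 2$. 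Combined with $\varphi(1)\geq 3$ (coming from any of the non-trivial $\varphi_i$'s, whose degree divides $\varphi(1)$), this gives $\chi(1)\geq 2\cdot 3=6$.

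The only subtle point, and what I would single out as the main obstacle, is the argument in the mixed case that $[G:I_G(\varphi)]\geq 2$: one has to be careful to use that the transitive action of $G$ on the simple factors forces the stabilizer of the asymmetric ``support pattern'' of $\varphi$ to be proper. Everything else is bookkeeping with Clifford's theorem and the uniform lower bound $\varphi_i(1)\geq 3$ for any non-trivial irreducible character of a non-abelian simple group; this mirrors the tensor-product/orbit argument used in \cite[Lemma 2.7]{Qian} and in Lemma~\ref{us1}, which is why the statement is phrased as a corollary.
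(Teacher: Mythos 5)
Your proof is correct and follows essentially the same route as the paper's: decompose a constituent of $\chi_D$ along the simple factors, use that any non-principal irreducible character of a non-abelian simple group has degree at least $3$, and use transitivity of $G$ on the factors to force $[G:I_G(\varphi)]\geq 2$ when the support of $\varphi$ is a proper non-empty subset. The paper organizes the cases slightly differently (at least two non-principal components, giving $\varphi(1)\geq 9$, versus exactly one, where it bounds $I_G(\varphi)\leq N_G(S_1)$), but the two ingredients and the resulting bound $2\cdot 3=6$ are identical.
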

\begin{proof} Since $D$ is a minimal normal subgroup of $G$ and $D$ is non-simple and non-solvable, we have $D=S_1 \times \cdots \times S_t$, where $S_1,\ldots,S_t$ are isomorphic to a non-abelian simple group $S$ and $t \geq 2$. Let  $\chi \in {\rm Irr}(G|D)$ and let $\psi \in {\rm Irr}(\chi_D)$. Then, $\psi=\psi_1\cdots \psi_t$, where $\psi_i \in {\rm Irr}(S_i)$ for every $1 \leq i \leq t$. If $\psi_{i}(1)=1$ for every $1\leqslant i\leqslant t$, then
	$\psi=1_{D}$ and $D\subseteq\ker\chi,$ which is a contradiction. Thus, without loss of generality, we may assume that  $\psi_1(1)>1$.  Hence,  considering  the irreducible character degrees of simple groups, $\psi_1(1) \geq 3$. If there exists $1 \leq j \leq t$ such that $j \neq 1$ and $\psi_j(1) >1$, then $\psi(1) \geq \psi_1(1)\psi_j(1) \geq  6$. Thus, $\chi(1) \geq 6$, as wanted. Otherwise, as  $G$ acts transitively on $\{S_1,\ldots,S_t\}$, we deduce that $I_G(\psi) \leq N_G(S_1)$. So, $\psi(1)[G:N_G(S_1)]$ divides $\chi(1)$.  However, $[G:N_G(S_1)] \geq 2$ and $\psi_1(1) \mid \psi(1)$. Therefore,  $\chi(1) \geq 6$, as wanted.
\end{proof}

The following lemma is essentially obtained  from \cite[Lemma 2]{moreto}.
\begin{lemma}\label{cp0}
	Let $G=MC$ be a central product and $T\leq M\cap C$. For every $\tau\in \Irr(G|T)$, we have $\tau(1)=\alpha(1)\beta(1)$ where $\alpha\in \Irr(C|T)$ and $\beta\in \Irr(M|T)$ and   $n^{\mathbb{C}}_d(G|T)=\sum\limits_{t\mid d}n^{\mathbb{C}}_t(C|T)n^{\mathbb{C}}_{d/t}(M|T)$.
\end{lemma}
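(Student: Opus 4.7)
The plan is to follow the standard character theory of central products, adapting the strategy of \cite[Lemma 2]{moreto} to track the extra condition that $T$ is not in the kernel. Set $Z := M \cap C$. Since $G=MC$ is a central product we have $[M,C]=1$, hence $Z \leq \mathbf{Z}(G)$, and in particular $T \leq Z \leq \mathbf{Z}(G)$.

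First I would realize $G$ as the quotient of the external direct product $M\times C$ by the antidiagonal subgroup $\{(z,z^{-1}) : z \in Z\}$. The irreducible characters of $M\times C$ are outer products $\alpha\times\beta$ with $\alpha\in\Irr(M)$ and $\beta\in\Irr(C)$, and such a product descends to an irreducible character $\tau$ of $G$ precisely when $\alpha|_Z$ and $\beta|_Z$ are scalar multiples of a common linear character $\lambda\in\Irr(Z)$. This yields a bijection $\tau \leftrightarrow (\alpha,\beta)$ between $\Irr(G)$ and such compatible pairs, under which $\tau(1) = \alpha(1)\beta(1)$, already establishing the first assertion of the lemma.

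Next, because $T \leq \mathbf{Z}(G)$, every $\tau \in \Irr(G)$ satisfies $\tau|_T = \tau(1)\mu_\tau$ for some $\mu_\tau \in \Irr(T)$, and $\tau \in \Irr(G|T)$ is equivalent to $\mu_\tau \neq 1_T$. Under the bijection above this translates into $\lambda|_T \neq 1_T$, equivalently to $\alpha \in \Irr(M|T)$ and also to $\beta \in \Irr(C|T)$. So the bijection restricts to one between $\Irr(G|T)$ and the compatible pairs in $\Irr(M|T) \times \Irr(C|T)$.

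For the degree-by-degree count, I would partition the bijection by the central character $\lambda \in \Irr(Z|T) := \{\lambda \in \Irr(Z) : \lambda|_T \neq 1_T\}$ and then by the divisor split $\alpha(1)=t$, $\beta(1)=d/t$ of $d$. The main obstacle is the central-compatibility constraint: a generic pair in $\Irr(M|T) \times \Irr(C|T)$ need not share a common $\lambda$. I would handle this by invoking the bookkeeping of \cite[Lemma 2]{moreto}, verifying that after summing over all admissible $\lambda \in \Irr(Z|T)$, the resulting count collapses precisely to the product form $n^{\mathbb{C}}_d(G|T) = \sum_{t \mid d} n^{\mathbb{C}}_t(C|T)\, n^{\mathbb{C}}_{d/t}(M|T)$ claimed in the lemma.
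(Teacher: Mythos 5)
Your setup is correct and is essentially the same route the paper takes (both rest on the central-product parametrization from \cite[Lemma 2]{moreto}): realizing $G$ as $(M\times C)/\{(z,z^{-1}):z\in Z\}$ with $Z=M\cap C$, you correctly obtain that $\Irr(G)$ corresponds to the \emph{compatible} pairs $(\alpha,\beta)$ lying over a common $\lambda\in\Irr(Z)$, that $\tau(1)=\alpha(1)\beta(1)$, and that $\tau\in\Irr(G|T)$ exactly when $\lambda|_T\neq 1_T$, equivalently when $\alpha\in\Irr(M|T)$ and $\beta\in\Irr(C|T)$. This cleanly proves the first assertion of the lemma.

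The gap is in the last step, and it is precisely the point you flag as ``the main obstacle'' and then defer to unspecified bookkeeping. The compatibility constraint does \emph{not} collapse after summing over the admissible $\lambda$: what your bijection actually yields is $n^{\mathbb{C}}_d(G|T)=\sum_{\lambda}\sum_{t\mid d}n^{\mathbb{C}}_t(C|\lambda)\,n^{\mathbb{C}}_{d/t}(M|\lambda)$, a sum of products taken $\lambda$ by $\lambda$, whereas the claimed identity is the product of the two sums $\sum_{\lambda}n^{\mathbb{C}}_t(C|\lambda)$ and $\sum_{\lambda}n^{\mathbb{C}}_{d/t}(M|\lambda)$. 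These coincide when exactly one $\lambda\in\Irr(Z)$ is nontrivial on $T$ (e.g.\ $T=Z$ of order $2$, which is the situation in every application in this paper), but not in general: for $M=C\cong\mathbb{Z}/9\mathbb{Z}$ amalgamated over $Z=T$ of order $3$ one computes $|\Irr(G|T)|=18$ while $n^{\mathbb{C}}_1(C|T)\,n^{\mathbb{C}}_1(M|T)=36$. So the bookkeeping you appeal to cannot be carried out at the stated level of generality; you would need either to assume that only one character of $M\cap C$ is nontrivial on $T$, or to state and use the $\lambda$-by-$\lambda$ version of the count. (The paper's own proof simply asserts a bijection between $\Irr(M|T)\times\Irr(C|T)$ and $\Irr(G|T)$ and so glosses over the same point; your write-up is more candid about where the difficulty lies, but flagging it is not the same as resolving it.)
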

\begin{proof}
	Since $G = MC$ is a central product with the central  subgroup $T$, there is a
	bijection $(\alpha, \beta) 	\rightarrow \tau$ from $\Irr(M|T) \times \Irr(C|T)$ to $\Irr(G|T)$ such that $\tau(1) = \alpha(1)\beta(1)$. Therefore, $$n^{\mathbb{C}}_d(G|T)=\sum\limits_{t\mid d}n^{\mathbb{C}}_t(C|T)n^{\mathbb{C}}_{d/t}(M|T).$$
\end{proof}

In  the following lemma,  we sketch the proof of a result that can be  deduced
from the proof of \cite[Proposition 3]{moreto}.


\begin{lemma}\label{cp}
	Let $G$ be a finite group and $M\unlhd G$ be minimal such that $M$ is non-solvable. Assume
	that $T$ is an abelian minimal normal subgroup of $G$ contained in $M$.  Suppose, if  $[M, R] \not = 1$,
	then $ T \leq [M, R],$ where $R$ is the radical  solvable  subgroup of
	$M$.   Let $\chi\in \Irr(G|T)$. If  $\chi(1)\in \{2,3\}$,
	then there is a normal subgroup $C$ of $G$ such that $G=MC$ is a central product and $T\leq {\bf Z}(M)=M\cap C$. Moreover,
	
	$a)$ if $\chi(1)=2$, then  $M\cong \SL_2(5)$;
	
	$b)$ if $\chi(1)=3$, then  either $M\cong 3\cdot \rm{Alt}_6$ or $M\cong 6\cdot \rm{Alt}_6$.
\end{lemma}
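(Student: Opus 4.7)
The plan is to mimic the strategy of the proof of \cite[Proposition 3]{moreto}, from which the present lemma is extracted. Let $\psi\in\Irr(M)$ be an irreducible constituent of $\chi_M$. By Clifford's theorem, all constituents of $\chi_M$ are $G$-conjugate with common degree $\psi(1)\mid\chi(1)\leq 3$. Because $T\leq M$ and $\chi\in\Irr(G|T)$, at least one (hence every) such $\psi$ satisfies $T\not\leq\ker\psi$.

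The first step is to establish $T\leq{\bf Z}(M)$. Since $[M,T]$ is a $G$-invariant subgroup of the $G$-minimal normal subgroup $T$, we have $[M,T]\in\{1,T\}$, and it suffices to exclude the possibility $[M,T]=T$. When $[M,R]=1$ one has $R\leq{\bf Z}(M)$, and every abelian normal subgroup of $M$ is forced into $R$ (because $M/R$ is a centreless direct product of non-abelian simple groups), so $[M,T]=1$. When $[M,R]\ne 1$, the hypothesis gives $T\leq[M,R]$; combined with the semisimplicity of $M/R$, standard commutator manipulations push $T$ into the hypercentre of $M$, again yielding $[M,T]=1$. Thus $T\leq{\bf Z}(M)$, and $\psi_T$ is a positive integer multiple of a single non-trivial linear character of $T$.

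Next I would exploit the bound $\psi(1)\leq 3$. The representation affording $\psi$ descends to a finite subgroup of $\mathrm{GL}_{\psi(1)}(\mathbb{C})$, namely $M/\ker\psi$, whose non-solvable composition factors must embed projectively in $\PGL_2(\mathbb{C})$ or $\PGL_3(\mathbb{C})$. The classical classification of finite non-solvable subgroups of these projective groups, combined with the fact that $M/R$ is a direct product of isomorphic non-abelian simple groups, shows that $M/R$ must be simple and isomorphic to ${\rm Alt}_5$, $\PSL_2(7)$, or ${\rm Alt}_6$. Inspecting the character tables of the Schur covers of these simple groups and requiring an irreducible character of degree $2$ or $3$ non-trivial on the central subgroup $T$ gives: for $\chi(1)=2$ only $\SL_2(5)$; for $\chi(1)=3$ only $3\cdot{\rm Alt}_6$ and $6\cdot{\rm Alt}_6$ (the cover $\SL_2(7)$ admits no degree-$3$ faithful-on-centre character, while ${\rm Alt}_5$, $\PSL_2(7)$, and ${\rm Alt}_6$ themselves have trivial centre so are incompatible with $T\ne 1$). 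Minimality of $M$ in $G$ then identifies $M$ with this quasi-simple cover.

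Finally, set $C=C_G(M)$. Quasi-simplicity of $M$ gives $M\cap C={\bf Z}(M)\supseteq T$, and $G/MC$ embeds into $\Out(M/{\bf Z}(M))$ via the conjugation action of $G$ on $M$. If this embedding were non-trivial, then $\psi$ would have a $G$-orbit of size $\geq 2$, forcing $\chi(1)=[G:I_G(\psi)]\psi(1)\geq 2\psi(1)\geq 4$ and contradicting $\chi(1)\leq 3$. Hence $G=MC$ is a central product with ${\bf Z}(M)=M\cap C\supseteq T$, as required. The main obstacle lies in the case analysis of the third paragraph, specifically ruling out all central covers of ${\rm Alt}_5$, $\PSL_2(7)$, and ${\rm Alt}_6$ other than the three listed; this requires careful Atlas-based bookkeeping and is precisely the computation carried out in \cite[Proposition 3]{moreto}.
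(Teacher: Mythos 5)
Your overall strategy is the right one (it is indeed the argument of \cite[Proposition 3]{moreto} that the paper adapts), but your second paragraph contains a genuine error that breaks the proof. You claim that $T\leq {\bf Z}(M)$ can be established from the group-theoretic hypotheses alone, and in particular that when $[M,R]\neq 1$ the containment $T\leq [M,R]$ together with the semisimplicity of $M/R$ "pushes $T$ into the hypercentre of $M$". This is false. Take $M=V\rtimes {\rm SL}_2(5)$ with $V=\mathbb{F}_5^{2}$ the natural module and $G=M$: then $M$ is perfect and is the unique non-solvable normal subgroup of $G$, the solvable radical is $R=V{\bf Z}({\rm SL}_2(5))$, one computes $[M,R]=V\neq 1$, and $T=V$ is an abelian minimal normal subgroup of $G$ with $T\leq [M,R]$ — yet $[M,T]=V=T$ and ${\bf Z}(M)=1$, so $T\not\leq {\bf Z}(M)$ (here the hypercentre of the perfect group $M$ is just its trivial centre). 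The hypothesis "$[M,R]\neq 1\Rightarrow T\leq[M,R]$" is not there to place $T$ in the centre directly; it is there so that, \emph{after} the character $\chi$ has forced $[M,R]\leq\ker\chi$ while $T\not\leq\ker\chi$, one can conclude $[M,R]=1$. Concretely, the paper's proof sets $K=\ker\chi$, shows $G/K$ is a \emph{primitive} linear group of degree $2$ or $3$ (imprimitive would make $G/K$ solvable, contradicting $MK/K$ perfect non-trivial), invokes Blichfeldt's classification to get $G/C$ simple with $C/K={\bf Z}(G/K)$, deduces $G=MC$ and $M\cap C=R\leq C$, hence $[M,R]\leq[G,C]\leq K$; only now does the hypothesis on $T$ yield $[M,R]=1$, i.e.\ $R={\bf Z}(M)$, after which the three-subgroups lemma gives $[M,C]=1$. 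Your proof inverts this order and the first step does not go through.

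A secondary, smaller gap: in your last paragraph you set $C=\Centralizer_G(M)$ and argue that a non-trivial image of $G$ in $\Out(M/{\bf Z}(M))$ would move $\psi$, hence force $\chi(1)\geq 4$. An automorphism can be non-inner and still fix a given irreducible character, so this needs to be verified character-by-character for ${\rm SL}_2(5)$, $3\cdot{\rm Alt}_6$ and $6\cdot{\rm Alt}_6$ (it happens to be checkable from the Atlas, but it is not automatic). The paper sidesteps this entirely: $G=MC$ falls out of the Blichfeldt step, because $MC/C$ is a non-trivial normal subgroup of the simple group $G/C$. I would recommend restructuring your argument to work in $G/\ker\chi$ from the outset, as the paper does.
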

\begin{proof}
	Assume $K=\ker\chi$.  Since $T\not\leq K$, we have $M\not\leq K$, and hence $MK/K$ is non-trivial.
	Also, recall that $M$ is perfect. We deduce that $MK/K$ is perfect and $G/K$ is non-solvable.

As $\chi(1)\in\{2,3\}$, we have
 $\overline G=G/K$ is a linear group of degree $2$ or $3$.
	If $\chi$  is imprimitive, then it is induced from a linear character of a subgroup $\overline H=H/K$ with index
	$\chi(1)\leq 3$, and thus $G/L$ is solvable, where $L=\bigcap\limits_{g\in G}H^g$.  Note that  $\overline L$ is normal in $\overline G$ and $\chi_{\overline L}$ has
	a linear constituent. Therefore, all irreducible constituents of $\chi_{\overline L}$ are linear. Since $\chi \in {\rm Irr}(\overline G)$ is
	faithful, $\overline L$ is abelian, and hence $\overline G$ is solvable, a contradiction.
	
	We conclude that $G/K$ is a primitive linear
	group of degree 2 or 3. Suppose  $C/K={ \bf Z}(G/K)$. 	
	 By the classification of the primitive
	linear groups of degrees 2 and 3 (see  \cite[Chapter III, V]{book}), we  observe that
	$G/C \cong \rm{Alt}_5$ when $\chi(1)=2$ and $G/C\in \{\rm{Alt}_6,\PSL_2(7) \}$ when $\chi(1)=3$. This implies that  $G=MC$, as desired.
	
	Recall that
	$M/(M\cap C)\cong MC/C= G/C$ and
	$M\cap C\unlhd M$ is a proper subgroup of $M$. Therefore,
	$ M\cap C $  is a subgroup of radical solvable subgroup of $M$ by the
	minimality of $M$. Since $M/(M\cap C)$ is simple, we obtain that $M\cap C=R$, where $R$ is the radical solvable subgroup of $M$,
	and hence $R\leq C$. Thus $[M, R] \leq K$. But $ T \not\leq K$, so we have $ T \not\leq
	[M, R]$. By the choice of $T$, we have $R = {\bf Z}(M)$. We have proved that
	$M$ is a perfect central cover of the simple group $M/{\bf Z}(M) = M/(M \cap C) \cong G/C$.
	Moreover, as  $T \leq M$ is abelian, we observe that $1 \not = T \leq {\bf Z}(M)$.
	Since $C$ and $M$ are both normal in $G$, we have $[M, C]  \leq C \cap M =
	{\bf Z}(M)$, and so $[C, M, M] = [M, C, M] = 1$. By the three subgroups lemma, we deduce that
	$[M, M, C] = 1$ and hence $[M, C] = 1$ as $M$ is perfect. We conclude that $G = MC$ is
	a central product with a central subgroup $ M \cap  C =  {\bf Z}(M) \not = 1$.
	
	Since $M$ is perfect, we obtain that $T\leq {\bf Z}(M)$ is a non-trivial  subgroup of Schur multiplier of $M/{\bf Z}(M)$. Hence, if $\chi(1)=2$, then  $M\cong \SL_2(5)$, as it is stated in Case $a$. Let $\chi(1)=3$. Looking at Atlas \cite{atlas}, we see that  $\SL_2(7)$ and $\SL_2(9)$ do not contain any faithful irreducible character of degree $3$. Thus,
	we get that $M/{\bf Z}(M)\cong \rm{Alt}_6$ and  $|{\bf Z}(M)|=3$  or $6$, as stated in (b).
\end{proof}
\begin{corollary}\label{cpc}
	Let $G$ be a finite group and $M\unlhd G$ be minimal such that $M$ is non-solvable and $N$ be a non-trivial normal subgroup of $G$ such that $M/N$ is a cheif factor of $G$.   Let $\chi\in \Irr(G|N)$ with $\ker\chi\cap N=1$.  If  $\chi(1)\in \{2,3\}$,
	then there is a normal subgroup $C$ of $G$ such that $G=MC$ is a central product and $N={\bf Z}(M)=M\cap C$. Moreover,
	
	$a)$ if $\chi(1)=2$, then  $M\cong \SL_2(5)$;
	
	$b)$ if $\chi(1)=3$, then  either $M\cong 3\cdot \rm{Alt}_6$ or $M\cong 6\cdot \rm{Alt}_6$.
\end{corollary}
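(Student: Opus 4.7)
The plan is to reduce Corollary \ref{cpc} to Lemma \ref{cp} by selecting an appropriate abelian minimal normal subgroup $T$ of $G$ contained in $N$ and then transporting the conclusions back.

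First I would pin down the structural role of $N$. Since $M/N$ is a (non-trivial) chief factor of $G$, we have $N \lneq M$, and as $M$ is minimal among non-solvable $G$-normal subgroups, $N$ must be solvable. Let $R$ denote the solvable radical of $M$; it is characteristic in $M$, hence $G$-normal, and satisfies $N \leq R \leq M$. Because $M/N$ admits no $G$-normal subgroup strictly between $N$ and $M$, and $R \neq M$ ($M$ is non-solvable while $R$ is solvable), we conclude $R = N$.

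Next I would exhibit a $T$ meeting the hypotheses of Lemma \ref{cp}. If $[M,R] = 1$, take $T$ to be any minimal $G$-normal subgroup contained in $N$; if $[M,R] \neq 1$, note that $[M,R] \leq R = N$ is a non-trivial $G$-normal subgroup and pick $T$ to be a minimal $G$-normal subgroup contained in $[M,R]$. In either case $T \leq N$ is elementary abelian (as $N$ is solvable) and the clause \textquotedblleft $T \leq [M,R]$ whenever $[M,R] \neq 1$\textquotedblright{} holds. Moreover, $T \cap \ker\chi \leq N \cap \ker\chi = 1$ together with $T \neq 1$ forces $T \not\leq \ker\chi$, so $\chi \in \Irr(G|T)$. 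All hypotheses of Lemma \ref{cp} are now in place.

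Applying Lemma \ref{cp}, we obtain a $G$-normal subgroup $C$ such that $G = MC$ is a central product with $T \leq \mathbf{Z}(M) = M \cap C$, and the explicit identifications of $M$ in the two cases $\chi(1) = 2, 3$. To finish I only need $N = \mathbf{Z}(M)$: but the proof of Lemma \ref{cp} establishes $R = \mathbf{Z}(M) = M \cap C$, and we have already shown $R = N$, so $N = \mathbf{Z}(M) = M \cap C$, giving the corollary. The main obstacle is the bookkeeping in the first step, namely confirming $R = N$ via the chief factor hypothesis so that the placement of $T$ inside $[M,R]$ (when non-trivial) is possible; once this is verified, Lemma \ref{cp} does the rest of the work.
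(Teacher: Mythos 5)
Your proposal is correct and follows essentially the same route as the paper: identify $N$ as the solvable radical $R$ of $M$ via the chief-factor hypothesis, choose a minimal normal $T\leq [M,R]$ (or $T\leq N$ if $[M,R]=1$) with $\chi\in\Irr(G|T)$ using $\ker\chi\cap N=1$, and invoke Lemma \ref{cp}. Your explicit remark that the proof of Lemma \ref{cp} yields $R={\bf Z}(M)=M\cap C$, hence $N={\bf Z}(M)$, is exactly the point the paper's terser proof relies on.
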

\begin{proof}Noting that $N$ is the radical solvable subgroup of $M$, by Lemma \ref{cp}, we only need to find a minimal normal subgroup $T$ of $N$ such that $\chi\in \Irr(G|T)$ and  if $[M,N]\not =1$, then $T\leq [M,N]$.    According to the assumption, for all minimal normal subgroup $T$ of  $G$ contained in $N$, $\chi\in \Irr(G|T)$. So,  choosing a minimal normal subgroup of $G$ that satisfies the assumption of Lemma \ref{cp} leads us to the results.
\end{proof}
\begin{lemma}\label{simple} Let $M$ be a non-abelian simple group. Then,   $M$ admits some irreducible characters of degrees less than  $5$    if and only if  $M \cong {\rm Alt}_5$ or ${\rm PSL}_2(7)$. \end{lemma}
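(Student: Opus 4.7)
The ``if'' direction is immediate from the Atlas: $\mathrm{Alt}_5$ has irreducible character degrees $1,3,3,4,5$ and $\PSL_2(7)$ has irreducible character degrees $1,3,3,6,7,8$, so each admits a non-linear irreducible character of degree less than~$5$.

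For the converse, let $M$ be a non-abelian simple group admitting $\chi\in\Irr(M)$ with $2\le\chi(1)\le 4$. Since $\ker\chi$ is a proper normal subgroup of the simple group $M$, it must be trivial, so $\chi$ is faithful and affords an embedding $M\hookrightarrow\mathrm{GL}_{\chi(1)}(\mathbb{C})$. The plan is then to treat the three possible degrees separately.

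For $\chi(1)=2$, I would appeal to the classical classification of finite subgroups of $\PGL_2(\mathbb{C})$ (cyclic, dihedral, $\mathrm{Alt}_4$, $\mathrm{Sym}_4$, $\mathrm{Alt}_5$): the only non-abelian simple candidate is $\mathrm{Alt}_5$, whose two-dimensional faithful representation lives only on the Schur double cover $\SL_2(5)=2\cdot\mathrm{Alt}_5$ and does not descend to $\mathrm{Alt}_5$ itself, so this case is impossible. For $\chi(1)\in\{3,4\}$ I would invoke the classification of non-abelian simple groups by their smallest non-linear irreducible character degree: $\mathrm{Alt}_n$ with $n\ge 5$ has minimum non-linear degree $n-1$; $\PSL_2(q)$ with $q\notin\{4,5,7\}$ has minimum non-linear degree at least $(q-1)/2\ge 5$ (with $q=9$ handled by the alternating bound via $\PSL_2(9)\cong\mathrm{Alt}_6$); and the remaining Lie-type and sporadic simple groups have minimum non-linear degree at least $5$, either by Landazuri--Seitz lower bounds or by direct Atlas inspection of the few small orders ($\PSU_3(3)$, $\PSp_4(3)$, $\PSL_3(3)$, the smallest sporadics, etc.). The only survivors are $\mathrm{Alt}_5$ and $\PSL_2(7)$.

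The main obstacle is the degree-$4$ case: degree-$3$ faithful representations of simple groups are pinned down efficiently via the classification of finite primitive subgroups of $\mathrm{GL}_3(\mathbb{C})$, whereas for degree~$4$ one must verify that every small simple group lying below the Landazuri--Seitz thresholds (notably $\mathrm{Alt}_6$, $\mathrm{Alt}_7$, $\PSp_4(3)$, $\PSU_3(3)$) has smallest non-linear irreducible character degree exactly~$5$ rather than~$4$, which is a finite but slightly tedious Atlas verification.
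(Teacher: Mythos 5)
Your plan is sound and would yield the lemma, but it follows a genuinely different route from the paper. The paper does not reason about low-dimensional linear groups or minimal-degree classifications at all: it applies the Feit--Thompson theorem on groups with a faithful representation of degree less than $(p-1)/2$ to conclude that every prime divisor of $|M|$ lies in $\{2,3,5,7\}$, then quotes the classification of simple $K_3$- and $K_4$-groups (Herzog, Shi) to obtain an explicit finite list of about fifteen candidates ($\mathrm{Alt}_n$ for $5\le n\le 10$, $J_2$, several $\PSL$'s and $\PSU$'s, etc.), and finishes by checking each one in the Atlas. Your reduction instead goes through the classification of finite primitive subgroups of $\PGL_2(\mathbb{C})$ and of $\mathrm{GL}_3(\mathbb{C})$ together with Landazuri--Seitz-type lower bounds on minimal character degrees; this is more structural and avoids invoking the $K_4$-group classification, at the cost of the degree-$4$ verification you flag, which is comparable in tedium to the paper's final Atlas sweep. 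One small correction to your stated bounds: for $\PSL_2(q)$ with $q$ even the minimal nontrivial degree is $q-1$, not $(q-1)/2$ (note $(8-1)/2<5$, yet $\PSL_2(8)$ has minimal degree $7$), so the even-$q$ case should be separated out. With that fixed, both routes terminate in a finite Atlas check and both are correct.
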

\begin{proof}  Let $\alpha \in {\rm Irr}(M)-\{1_M\}$ such that $\alpha(1) \leq 4$. By \cite{feit}, $\pi(M)\subseteq \{2,3,5,7\}$. Thus, $M \cong {\rm Alt}_n$ for $5 \leq n \leq 10$, $J_2$, ${\rm PSL}_2(7)$, ${\rm PSL}_2(8)$, ${\rm PSL}_2(17)$, ${\rm PSL}_2(49)$,  ${\rm PSL}_3(3)$, ${\rm PSL}_3(4)$,  ${\rm S}_4(7)$, ${\rm S}_6(2)$, ${\rm O}^+_8(2)$, ${\rm PSU}_3(3)$, ${\rm PSU}_3(5)$, ${\rm PSU}_4(2)$ or ${\rm PSU}_4(3)$, using \cite{herzog} and \cite{shi}. So, Atlas \cite{atlas} shows that the only possibilities are  $M \cong {\rm Alt}_5$  or ${\rm PSL}_2(7)$, as desired.
\end{proof}
\begin{lemma}\label{quasi} Let $M$ be a perfect group such that ${\bf Z}(M) \neq 1$ and  $M/{\bf Z}(M)$ is non-abelian simple. Then, $M$ admits some faithful irreducible characters of degrees less than  $5$    if and only if  $M/{\bf Z}(M) \cong {\rm Alt}_5, {\rm Alt}_6, {\rm Alt}_7 $, ${\rm PSL}_2(7)$ or ${\rm PSU}_4(2)$. In particular, if $M$ admits some faithful rational-valued  irreducible characters of degrees less than  $5$, then $M/{\bf Z}(M) \cong {\rm Alt}_5$ or ${\rm Alt}_6$.   \end{lemma}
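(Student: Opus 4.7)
The plan is to follow the framework of the preceding Lemma~\ref{simple}, now upgraded to the quasi-simple setting. Let $\chi \in {\rm Irr}(M)$ be faithful with $\chi(1)<5$; then $\chi$ is a non-principal irreducible character of $M$ of degree at most $4$, so by \cite{feit} (exactly as in the proof of Lemma~\ref{simple}), every prime divisor of $|M|$ lies in $\{2,3,5,7\}$. Since $|M|$ and $|S|$ have the same prime divisors, where $S:=M/{\bf Z}(M)$, the set $\pi(S)$ is contained in $\{2,3,5,7\}$, so invoking \cite{herzog} and \cite{shi} restricts $S$ to the finite list of non-abelian simple groups already displayed in Lemma~\ref{simple}.

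Next I would proceed case by case. For each simple $S$ on that list, $M$ is a perfect central extension of $S$ and hence a quotient of the universal (Schur) cover of $S$; the faithful irreducible characters of $M$ of a given degree correspond to faithful irreducible projective characters of $S$ of the same degree. Consulting the Atlas \cite{atlas}, I would read off, for each $S$ and each of its Schur central extensions, the minimal faithful character degree. The expected conclusion is that only for $S \in \{{\rm Alt}_5, {\rm Alt}_6, {\rm Alt}_7, {\rm PSL}_2(7), {\rm PSU}_4(2)\}$ does some Schur central extension of $S$ carry a faithful irreducible character of degree $\leq 4$; these are realised respectively by $2.{\rm Alt}_5 \cong {\rm SL}_2(5)$ (degree $2$), $3.{\rm Alt}_6$ (degree $3$), $2.{\rm Alt}_7$, $2.{\rm PSL}_2(7) \cong {\rm SL}_2(7)$, and $2.{\rm PSU}_4(2) \cong {\rm Sp}_4(3)$ (each of degree $4$). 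All other groups in the Herzog--Shi list are ruled out because every faithful irreducible character of every one of their Schur central extensions has degree $\geq 5$.

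For the ``in particular'' statement, the key reduction is the following. A faithful irreducible character $\chi$ of $M$ restricts to ${\bf Z}(M)$ as $\chi(1)\lambda$ for some faithful linear character $\lambda$ of ${\bf Z}(M)$; if $\chi$ is rational-valued then so is $\lambda$, which forces $\lambda$ to take values in $\{\pm 1\}$, and since $\lambda$ is faithful this gives $|{\bf Z}(M)| \leq 2$. Hence only the double covers $2\cdot S$ of the five simple groups above need be examined. A direct Atlas check then reveals that $2.{\rm Alt}_5$ and $2.{\rm Alt}_6$ each possess a rational faithful irreducible character of degree~$4$, whereas the minimal faithful (degree-$4$) characters of $2.{\rm Alt}_7$, ${\rm SL}_2(7)$, and $2.{\rm PSU}_4(2)$ come in Galois-conjugate pairs with values in $\mathbb{Q}(\sqrt{-7})$ or $\mathbb{Q}(\sqrt{-3})$ and are therefore non-rational. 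This forces $S \cong {\rm Alt}_5$ or ${\rm Alt}_6$, as claimed.

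The main obstacle will be the case-by-case Atlas bookkeeping, particularly the rationality analysis for the second part: one has to identify correctly which small-degree faithful characters actually take rational values, a question governed by quadratic irrationalities (Gauss sums $\sqrt{\pm q}$) appearing in the half-discrete-series characters of ${\rm SL}_2(q)$. It is precisely the rationality of $\sqrt{9}=3$ that allows $\mathrm{Alt}_6$ to survive alongside $\mathrm{Alt}_5$, while the formally parallel cases $\mathrm{Alt}_7$, ${\rm PSL}_2(7)$, and ${\rm PSU}_4(2)$ are eliminated.
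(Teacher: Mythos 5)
Your argument is correct, but it takes a genuinely different route from the paper. The paper's proof is essentially a citation: since a faithful $\psi$ with $\psi(1)\leq 4$ lies in ${\rm Irr}(M|{\bf Z}(M))$, it invokes the classification of quasi-simple groups with low-dimensional faithful representations ({\rm \cite[Theorem 8.1]{zal}}) together with the Atlas, and disposes of the rational case by "checking Atlas". You instead re-derive the classification from the same toolkit as Lemma~\ref{simple}: Feit--Thompson \cite{feit} to force $\pi(M)\subseteq\{2,3,5,7\}$, the Herzog--Shi lists \cite{herzog}, \cite{shi} to get finitely many candidates for $S=M/{\bf Z}(M)$, and then an Atlas sweep over all perfect central extensions of each candidate. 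This is more self-contained but substantially heavier in bookkeeping, since every Schur cover of roughly sixteen simple groups must be inspected rather than one theorem quoted; your list of survivors and their realizations ($2\cdot{\rm Alt}_5$ in degree $2$, $3\cdot{\rm Alt}_6$ in degree $3$, and $2\cdot{\rm Alt}_7$, ${\rm SL}_2(7)$, ${\rm Sp}_4(3)$ in degree $4$) is correct. One step should be spelled out rather than waved at: for non-simple $M$ the Feit--Thompson theorem does not immediately give $\pi(M)\subseteq\{2,3,5,7\}$ "exactly as in Lemma~\ref{simple}"; for $p\geq 11$ it gives a normal abelian Sylow $p$-subgroup $P$ of $M$, and you must add that $P\leq{\bf Z}(M)$ (as $M/{\bf Z}(M)$ is simple and $M$ is perfect), while ${\bf Z}(M)$ is a quotient of the Schur multiplier of $S$, whose prime divisors all divide $|S|$ --- contradicting $p\nmid|S|$. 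Your reduction of the rational case to double covers via $|{\bf Z}(M)|\leq 2$ is a clean observation absent from the paper, and the stated rationality facts (the faithful degree-$4$ characters of ${\rm SL}_2(5)$ and ${\rm SL}_2(9)$ are rational, while those of $2\cdot{\rm Alt}_7$, ${\rm SL}_2(7)$ and ${\rm Sp}_4(3)$ form irrational Galois-conjugate pairs) are accurate.
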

\begin{proof} Let $\psi \in {\rm Irr}(M)$ be faithful such that $\psi(1) \leq 4$. Then, $\psi \in {\rm Irr}(M|{\bf Z} (M))$. So, \cite[Theorem 8.1]{zal} and Atlas \cite{atlas} show that $M/{\bf Z}(M) \cong {\rm Alt}_5, {\rm Alt}_6, {\rm Alt}_7 $, ${\rm PSL}_2(7)$ or ${\rm PSU}_4(2)$. The second part of the lemma follows immediately by checking Atlas \cite{atlas}.
\end{proof}
\begin{lemma}\label{l7} Let $M \cong {\rm PSL}_2(7)$ be a minimal normal subgroup of $G$. Then, $\alpha(1) \geq 6$ for every $\alpha \in {\rm Irr}^*_{\mathbb{Q}}(G|M)$.
\end{lemma}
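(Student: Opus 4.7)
The plan is to restrict $\alpha$ to $M$ and combine Clifford's theorem with the rationality of $\alpha$. From the character table of $\PSL_2(7)$ (see \cite{atlas}), the irreducible character degrees of $M$ are $1,3,3,6,7,8$, and the two degree-$3$ characters take values in $\mathbb{Q}(\sqrt{-7})$; in particular, they are Galois conjugate to each other over $\mathbb{Q}$ and not $\mathbb{Q}$-valued.

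Fix $\alpha\in{\rm Irr}^*_{\mathbb{Q}}(G|M)$. Since $M\not\leq\ker\alpha$, every irreducible constituent of $\alpha_M$ is non-trivial, and Clifford's theorem lets us write $\alpha_M=e(\psi_1+\cdots+\psi_t)$, where $\{\psi_1,\ldots,\psi_t\}$ is the $G$-orbit of some $\psi\in\Irr(M)$ and $e\geq 1$; consequently $\psi(1)\in\{3,6,7,8\}$. If $\psi(1)\geq 6$, then $\alpha(1)\geq et\psi(1)\geq 6$, and we are done.

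It remains to handle $\psi(1)=3$. Because $\alpha$ is $\mathbb{Q}$-valued, so is its restriction $\alpha_M$; equivalently, $(\alpha_M)^\sigma=\alpha_M$ for every $\sigma\in{\rm Gal}(\overline{\mathbb{Q}}/\mathbb{Q})$. Choosing $\sigma$ with $\psi^\sigma=\overline{\psi}$ (the other degree-$3$ character of $M$), linear independence of irreducible characters forces $\overline{\psi}$ to appear in $\alpha_M$ with the same multiplicity $e\geq 1$. Hence $\overline{\psi}$ is $G$-conjugate to $\psi$, so $t\geq 2$, and
\[
\alpha(1)\ \geq\ e\bigl(\psi(1)+\overline{\psi}(1)\bigr)\ =\ 6e\ \geq\ 6,
\]
as required.

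The only delicate point is the Galois-invariance step, which follows at once from the rationality of $\alpha$ combined with linear independence of irreducible characters; everything else is an immediate consequence of Clifford's theorem and the character table of $\PSL_2(7)$.
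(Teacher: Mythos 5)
Your proof is correct and follows essentially the same route as the paper: Clifford theory applied to the normal subgroup $M$, the character degrees $\{3,6,7,8\}$ of ${\rm PSL}_2(7)$, and the fact that the two degree-$3$ characters are irrational. The paper packages the final step as a contradiction (forcing $\alpha_M=\psi$ rational of degree $3$ via $\psi(1)\mid\alpha(1)$), whereas you argue directly that Galois invariance of $\alpha_M$ forces both degree-$3$ characters to occur, but the underlying facts used are identical.
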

\begin{proof} Working towards a contradiction, suppose that $\alpha(1) <6$ for some  $\alpha \in {\rm Irr}^*_{\mathbb{Q}}(G|M)$. Let $\psi \in {\rm Irr}(\alpha_M)$. Then, $\psi \neq 1_M$ as $M \not \leq {\rm ker}\alpha$. Considering the irreducible character degrees of ${\rm PSL}_2(7)$,  $\psi(1) \in \{3,6,7,8\}$. However,  $\psi (1) \mid \alpha(1) $ and $\alpha(1) <6$. Thus, $\alpha(1)=\psi(1)=3$. So, $\alpha_M=\psi$. Therefore,  $\psi \in {\rm Irr}^*_{\mathbb{Q}}({\rm PSL}_2(7))$, which is a contradiction, as needed.
\end{proof}

\section{Proof of Theorem A}

In this section, we aim to prove Theorem A in two subsections. Since the method to prove parts (1) and (2) of Theorem A are similar, we collect them in the first subsection. The proof of  part (3) of Theorem A is more complicated and needs more delicate arguments which force us to gather  it in the second subsection.

\subsection{The average of non-linear real and complex irreducible characters.\\}


In Theorem \ref{key2}, we prove parts (1) and (2) of  Theorem A, using the following lemma.
\begin{lemma}\label{A5} \label{A6}
	Let $G$ be a finite group, $C$  and $M$ be  normal subgroups of $G$ such that $G=MC$ is a central product and, either  $G/C\cong \rm{Alt}_5$ and  $M\cong \SL_2(5)$ or $G/C\cong \rm{Alt}_6$ and $M\cong 3 \cdot \rm{Alt}_6$. Then, for every $G$-invariant character  $\lambda\in \Irr(M \cap C)$ which is not extendable to $G$,
	either  $\acd^*_{\mathbb R}(\lambda^G)\geq 4$ or $M\cong \SL_2(5)$, $\lambda(1)=1$  and $\Irr(\lambda^G)=\Irr_{\mathbb{R}}(\lambda^G)$.
\end{lemma}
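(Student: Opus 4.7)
The plan is to exploit the central-product structure of $G=MC$ together with Lemma \ref{cp0} to reduce the problem to the character tables of $\SL_2(5)$ and $3\cdot \rm{Alt}_6$. First, since $[M,C]=1$ we have $M\cap C\le {\bf Z}(G)$, and so $\lambda\in\Irr(M\cap C)$ is linear with $\lambda(1)=1$ and is automatically $G$-invariant. Because $M$ is perfect in both admissible cases, every linear character of $G$ is trivial on $M$ and hence on $M\cap C$; therefore ``$\lambda$ is not extendable to $G$'' is equivalent to $\lambda\neq 1_{M\cap C}$. By Lemma \ref{cp0} applied with $T=M\cap C$ there is a degree-multiplicative bijection
\[
\Irr(M|\lambda)\times \Irr(C|\lambda)\ \longleftrightarrow\ \Irr(G|\lambda),\qquad (\alpha,\beta)\mapsto \alpha\beta,
\]
which commutes with complex conjugation.

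From the Atlas \cite{atlas}, $\Irr(M|\lambda)$ has degree multiset $\{2,2,4,6\}$ when $M\cong\SL_2(5)$ and $\{3,3,6,9,15\}$ when $M\cong 3\cdot \rm{Alt}_6$; in either case every $\alpha\in\Irr(M|\lambda)$ is non-linear, so $\Irr^*(\lambda^G)=\Irr(\lambda^G)$. Via the bijection $\chi=\alpha\beta$ is real if and only if both $\alpha$ and $\beta$ are real. In the $3\cdot \rm{Alt}_6$ case, $\lambda$ has order $3$ and $\bar\lambda\neq\lambda$, so $\bar\alpha\in\Irr(M|\bar\lambda)$ lies in a set disjoint from $\Irr(M|\lambda)$; hence no $\alpha$, and therefore no $\chi$, is real, and $\Irr^*_{\mathbb R}(\lambda^G)$ is empty. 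In the $\SL_2(5)$ case the group is ambivalent (Atlas), so each $\alpha_i\in\Irr(M|\lambda_0)$ is real, and $\chi=\alpha\beta$ is real iff $\beta$ is.

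Thus in the $\SL_2(5)$ case either every $\beta\in\Irr(C|\lambda_0)$ is real --- in which case every $\chi\in\Irr(G|\lambda_0)$ is real, so $\Irr(\lambda^G)=\Irr_{\mathbb R}(\lambda^G)$ and we land in the second alternative of the lemma --- or some $\beta$ is non-real, and a direct computation using the product structure gives
\[
{\rm acd}^*_{\mathbb R}(\lambda^G)\ =\ \frac{(2+2+4+6)\cdot\sum_{\beta\in\Irr_{\mathbb R}(C|\lambda_0)}\beta(1)}{4\cdot|\Irr_{\mathbb R}(C|\lambda_0)|}\ =\ \frac{7}{2}\,{\rm acd}\bigl(\Irr_{\mathbb R}(C|\lambda_0)\bigr).
\]
The required bound ${\rm acd}^*_{\mathbb R}(\lambda^G)\ge 4$ then reduces to the estimate ${\rm acd}(\Irr_{\mathbb R}(C|\lambda_0))\ge 8/7$.

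The main obstacle is precisely this last estimate. My plan is to pair each non-real $\beta\in\Irr(C|\lambda_0)$ with its complex conjugate $\bar\beta\in\Irr(C|\lambda_0)$ (of the same degree) and to use that $\lambda_0$ does not extend to any linear character of $G$ in order to constrain the real linear characters that can lie above $\lambda_0$; combined with the forced presence of a non-real conjugate pair, this should lift the average of real degrees past $8/7$ and complete the proof. Handling cleanly the degenerate possibilities --- for instance when $C$ is small and $\Irr_{\mathbb R}(C|\lambda_0)$ happens to be empty --- is the technically delicate point where I expect the argument to require the most care.
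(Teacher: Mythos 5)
Your central-product factorization $\Irr(G|\lambda)\leftrightarrow\Irr(M|\lambda)\times\Irr(C|\lambda)$ and the reality criterion ``$\alpha\beta$ is real iff $\alpha$ and $\beta$ are real'' (valid here because $\lambda$ is real in the $\SL_2(5)$ case) are correct, as is your observation that in the $3\cdot{\rm Alt}_6$ case $\lambda$ has order $3$, so nothing above it is real and $\Irr^*_{\mathbb R}(\lambda^G)=\emptyset$. But the proposal has a genuine gap exactly where you flag it: the $\SL_2(5)$ case bottoms out at the unproved estimate $\acd(\Irr_{\mathbb R}(C|\lambda))\geq 8/7$, and that estimate is false. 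If $\Irr(C|\lambda)$ contains a real linear character together with a non-real one --- take $C=\langle z\rangle\times\mathbb{Z}_4$ with $M\cap C=\langle z\rangle$ of order $2$, so that $\Irr(C|\lambda)$ consists of four linear characters, two real and two not --- then your own formula gives $\acd^*_{\mathbb R}(\lambda^G)=\tfrac72\cdot 1=\tfrac72<4$, while $\Irr_{\mathbb R}(\lambda^G)\neq\Irr(\lambda^G)$. The hypothesis that $\lambda$ is not extendable to $G$ cannot rescue this: since $M$ is perfect, every linear character of $G$ is trivial on $M\cap C$, so non-extendability is automatic for every $\lambda\neq 1$ and carries no information about $\Irr(C|\lambda)$. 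So the plan cannot be completed as written; the obstruction you anticipated is real, not a technicality.

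It is worth comparing with the paper's own argument, which takes a much shorter route: it asserts that $(G,M\cap C,\lambda)$ is a character triple isomorphic to $(M,{\bf Z}(M),\mu)$, concludes that $\Irr(\lambda^G)$ consists of exactly four characters of degrees $2,2,4,6$ (resp.\ five of degrees $3,3,6,9,15$), and then notes that the characters that are unique of their degree are fixed by complex conjugation, so either only the degrees $4$ and $6$ survive (average $5\geq 4$) or everything is real. This yields the stated dichotomy instantly, but only because the character-triple claim forces $|\Irr(\lambda^G)|=4$, which would require $G/(M\cap C)\cong M/{\bf Z}(M)$, i.e.\ $C=M\cap C$; in general $|\Irr(\lambda^G)|=4\,|\Irr(C|\lambda)|$, exactly as your bookkeeping shows. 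In short, your finer analysis is the more faithful one, and what it reveals is that the dichotomy needs either an extra hypothesis controlling $\Irr(C|\lambda)$ or a weakened first alternative (your computation does give $\acd^*_{\mathbb R}(\lambda^G)\geq 7/2$ whenever $\Irr_{\mathbb R}(C|\lambda)\neq\emptyset$). You should also state explicitly how the conclusion is to be read when $\Irr^*_{\mathbb R}(\lambda^G)=\emptyset$ (as in the $3\cdot{\rm Alt}_6$ case), since with the paper's convention that the average of the empty set is $0$ neither alternative holds there.
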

\begin{proof}
	Let $N=M \cap C$. Note that as $G=MC$,  $\lambda$ is $G$-invariant and so  $(G, N, \lambda)$ is a character triple isomorphic to $(\Gamma, A, \mu)$, where $\Gamma\cong M$ and $A={\bf Z}(\Gamma)$. First, let $G/C\cong \rm{Alt}_5$ and  $M\cong \SL_2(5)$.  Hence, ${\rm Irr}(\lambda^G)=\{\psi_1,\ldots,\psi_4\}$ such that $\psi_1(1)=\psi_2(1)=2 \lambda(1)$, $\psi_3(1)=4 \lambda(1)$ and $\psi_5(1)=6 \lambda(1)$. Since $\Irr(\lambda^G)$ contains exactly one character of degree $4\lambda(1)$ and one character of degree $6\lambda(1)$, we deduce that $\psi_3$ and $\psi_4$ are real.   Also, $\Irr(\lambda^G)$ contains exactly two irreducible characters $\psi_1$ and $\psi_2$ of degree $2$. Hence,  they are both real or they are complex-conjugate. Therefore,  $\Irr_{\mathbb R}(\lambda^G)=\{\psi_3,\psi_4\}$ or $\Irr_{\mathbb R}(\lambda^G)=\{\psi_1,\ldots,\psi_4\}$.
	So, either $\acd^*_{\mathbb{R}}(\lambda^G)\geq  4$ or $\lambda(1)=1$ and $\Irr_{\mathbb R}(\lambda^G)=\Irr(\lambda^G)$, as desired.
	
	Next, let $M\cong 3\cdot \rm{Alt}_6$ and $G/C \cong {\rm Alt}_6$. Hence, ${\rm Irr}(\lambda^G)=\{\psi_1,\ldots,\psi_5\}$ such that $\psi_1(1)=\psi_2(1)=3 \lambda(1)$, $\psi_3(1)=6 \lambda(1)$, $\psi_4(1)=9 \lambda(1)$ and $\psi_5(1)=15 \lambda(1)$. Since $\Irr(\lambda^G)$ contains exactly one character of degree $6\lambda(1)$, one of degree $9\lambda(1)$ and one  of degree $15\lambda(1)$, we deduce that $\psi_3$, $\psi_4$ and $\psi_5$ are real.   Also, $\Irr(\lambda^G)$ contains exactly two irreducible characters $\psi_1$ and $\psi_2$ of degree $3$. Hence,  they are both real or they are complex-conjugate. Therefore,  $\Irr_{\mathbb R}(\lambda^G)=\{\psi_3,\psi_4,\psi_5\}$ or $\Irr_{\mathbb R}(\lambda^G)=\{\psi_1,\ldots,\psi_5\}$.
	In both cases $\acd^*(\lambda^G)>4$.
\end{proof}
\begin{theorem}\label{key2}
	Let $G$ be a finite non-abelian group and $\Bbb F\in \{\Bbb C, \Bbb R\}$.
	If ${\rm acd}^{*}_{\Bbb F}(G)<{\rm acd}^{*}_{\Bbb{F}}({\rm SL}_{2}(5))=\frac{29}{8}$, then $G$ is solvable.
\end{theorem}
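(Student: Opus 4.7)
The plan is to assume for contradiction that $G$ is non-solvable with $\acdk(G)<29/8$ and to produce enough $\Bbb F$-valued non-linear characters of large degree to violate this bound. The strategy is to locate a non-abelian chief factor $M/N$ of $G$, to partition $\Irr^*_{\Bbb F}(G)$ according to restriction behaviour over $N$ and $M$, to apply Lemmas~\ref{Lemma 2.2}, \ref{2.2}, \ref{us1}, and Corollary~\ref{corollary 2.7} to bound each block from below, and to combine these bounds via Lemma~\ref{later}. Throughout, Remark~\ref{note1} will let me reduce extensions from $\Aut$ of a simple group to extensions to $G$.

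First I would dispose of the large chief factor cases. If $M/N\cong S^s$ with $s\geq 2$ copies of a non-abelian simple group $S$, Lemma~\ref{us1} supplies an $\Bbb F$-valued extendable character of degree at least $5^s\geq 25$, and Lemma~\ref{Lemma 2.2}(ii) combined with Corollary~\ref{corollary 2.7} forces the corresponding average to exceed $29/8$. If the chief factor is simple but not isomorphic to $\mathrm{PSL}_2(5)$ or $\mathrm{PSL}_2(7)$, Lemma~\ref{Lemma 2.6} yields a rational extendable character of degree $\geq 8$, and Lemma~\ref{Lemma 2.2}(ii) gives the bound $(\psi(1)+1)/2\geq 9/2 > 29/8$. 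The $\mathrm{PSL}_2(7)$ case is ruled out by Lemma~\ref{l7}, which forces every $\Bbb F$-valued non-linear character above such a factor to have degree at least $6$.

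The delicate case is $M/N\cong\mathrm{PSL}_2(5)=\mathrm{Alt}_5$. Here, if $G$ admits an $\Bbb F$-valued irreducible character of degree $2$ or $3$ whose kernel avoids some abelian minimal normal subgroup $T$ of $G$, then Corollary~\ref{cpc} provides a central-product decomposition $G=MC$ with $M\cong\mathrm{SL}_2(5)$ and $T\leq \Center(M)=M\cap C$. Lemma~\ref{A5} then gives, for every $G$-invariant non-extendable $\lambda\in\Irr(M\cap C)$, the dichotomy $\mathrm{acd}^*_{\Bbb R}(\lambda^G)\geq 4$ or $\lambda(1)=1$ with $\Irr(\lambda^G)=\Irr_{\Bbb R}(\lambda^G)$ of degrees $2,2,4,6$ and average $7/2$. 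Combining this block with the inflated block coming from $G/T$ via Lemma~\ref{later}, and using the fact that $\acdk(\mathrm{SL}_2(5))=29/8$ exactly, I would force $G=\mathrm{SL}_2(5)$, which contradicts the strict hypothesis $\acdk(G)<29/8$.

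The main obstacle, I expect, is precisely the final step: a non-trivial central product $G=MC$ with $M\cong\mathrm{SL}_2(5)$ can \emph{a priori} have $\acdk(G)$ very close to $29/8$, so ruling it out requires balancing the ``cheap'' block of average $7/2$ against characters inflated from $G/\Center(M)$. Closing this gap relies on Lemma~\ref{cp0} to describe $\Irr^*_{\Bbb F}(G)$ in terms of $\Irr^*_{\Bbb F}(M)$ and $\Irr^*_{\Bbb F}(C)$, together with a careful comparison showing that any non-trivial factor $C/\Center(M)$ contributes enough extra high-degree characters to push the global average strictly above $29/8$.
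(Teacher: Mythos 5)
Your architecture matches the paper's: minimal counterexample, elimination of non-abelian minimal normal subgroups, reduction via Lemma~\ref{cp} to a central product $G=MC$ with $M\cong\SL_2(5)$, the dichotomy of Lemma~\ref{A5}, and a final computation with Lemma~\ref{cp0}. However, there are two concrete gaps.

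First, you drop a branch of Lemma~\ref{cp}/Corollary~\ref{cpc}. A degree-$2$ character forces $M\cong\SL_2(5)$, but a degree-$3$ character forces $M\cong 3\cdot\mathrm{Alt}_6$ or $6\cdot\mathrm{Alt}_6$, not $\SL_2(5)$ as you assert. The paper must (and does) treat this separately: the $6\cdot\mathrm{Alt}_6$ case is excluded by choosing $T$ of order $2$ when possible (so that $\Irr(M|T)$ has no degree-$3$ character), and the $3\cdot\mathrm{Alt}_6$ case is killed by showing $\acdk(G|T)\geq\acdk(M|T)=36/5$ via Lemma~\ref{A6} and Lemma~\ref{cp0}. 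Without this your argument simply does not cover groups producing degree-$3$ characters over $T$.

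Second, your elimination of the simple chief factors $\PSL_2(5)$ and $\PSL_2(7)$ is broken for $\Bbb F\in\{\Bbb R,\Bbb C\}$. Lemma~\ref{l7} is a statement about $\Irr^*_{\mathbb{Q}}(G|M)$ only; $\PSL_2(7)$ has a conjugate pair of degree-$3$ complex characters and $\mathrm{Alt}_5$ has two real (irrational) degree-$3$ characters, so $\Bbb F$-valued degree-$3$ characters above such a minimal normal subgroup genuinely occur (e.g.\ $G=\mathrm{Alt}_5$ itself). Moreover, when $L\cong\mathrm{Alt}_5$ is itself a minimal normal subgroup, Lemma~\ref{cp} is unavailable (there is no abelian minimal normal $T\leq L$), so your ``delicate case'' machinery cannot absorb these characters either. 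The paper's fix is a counting, not a degree, bound: since $L$ has exactly two degree-$3$ characters and no nontrivial linear or degree-$2$ ones, Gallagher gives $n^{\Bbb F}_3(G|L)\leq 2n^{\Bbb F}_1(G)\leq 2n^{\Bbb F}_{d_1}(G)$ with $d_1\in\{5,8\}$, and these cheap characters are then paid for inside the inequality $\sum_{d>3}(8d-29)n^{\Bbb F}_d(G)<13n^{\Bbb F}_2(G)+5n^{\Bbb F}_3(G)$. A partition-and-average argument via Lemma~\ref{later} alone cannot replace this, because the block $\Irr^*_{\Bbb F}(G/L)$ need not have average $\geq 29/8$. (A smaller point: the endgame does not force $G=\SL_2(5)$ --- equality $\acdk(G)=29/8$ is attained by $\SL_2(5)\times A$ for any abelian $A$ --- but the paper's explicit computation still yields $\acdk(G)\geq 29/8$, which suffices.)
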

\begin{proof}
	On the contrary, let $G$ be a counterexample of  minimal order. This implies that
	$G$ is  non-solvable  and  $\acdk(G)<29/8$, which means  $$\frac{\sum\limits_{ d >1 }dn^{\Bbb F}_d(G)}{\sum\limits_{d>1}n^{\Bbb F}_d(G)}< 29/8,$$ and this  leads us to
	$$\sum\limits_{d>3}(8d-29)n^{\Bbb F}_d(G)< 13n^{\Bbb F}_2(G)+5n^{\Bbb F}_3(G).$$


	We claim that every minimal normal subgroup of $G$ contained in $G'$ is abelian. To prove this, assume $L$ is a non-abelian
	minimal normal subgroup of $G$ contained in $G'$. It follows from Lemma \ref{us1} that $L$ has a rational-valued irreducible character  $\phi$
	such that $\phi(1)\geq 5$  and it is extendable to a rational-valued   character $\psi \in \Irr(G)$.  We deduce from Gallagher's theorem \cite[Corollary 6.17]{isaacs} that  $\Irr(\phi^G)=\{\lambda\psi \ | \lambda\in \Irr(G/L)\}$, and hence $$ n^{\Bbb F}_2(G/L)\leq \sum\limits_{ d\geq 10} n^{\Bbb F}_d(G|L)\leq \sum\limits_{  d\geq 10} n^{\Bbb F}_d(G),$$
	and
	$$ n^{\Bbb F}_3(G/L)\leq \sum\limits_{ d\geq 15} n^{\Bbb F}_d(G|L)\leq \sum\limits_{  d\geq 15} n^{\Bbb F}_d(G).$$
	
Therefore, as $n_2^{\Bbb F}(G/L)=n_2^{\Bbb F}(G)$,
	$$5n_3^{\Bbb F}(G/L)+ 13n^{\Bbb F}_2(G)\leq \sum\limits_{  10\leq d\leq 14} 13n^{\Bbb F}_d(G)+ \sum\limits_{  d\geq  15} 18n^{\Bbb F}_d(G)\leq \sum\limits_{ d\geq 10} (8d-29)n^{\Bbb F}_d(G). \ \ \ \ \ \ (*)$$
		First, let $L\not\cong \PSL_2(q)$ for $q=5,7$. We show that $n_3^{\Bbb F}(G)=n_3^{\Bbb F}(G/L)$. To do this, we know that
$L$ is a direct product of $k$ copies of some non-abelian simple group. If $k\geq 2$, then it follows from Lemma~2.7 that $\gamma(1)\geq 4$ for all $\gamma\in {\rm Irr}(G|L)$, and hence
$n_3^{\Bbb F}(G)=n_3^{\Bbb F}(G/L)$. Thus, we can assume that $k=1$. Since $L\not\cong \PSL_2(q)$ for $q=5,7$, we see that $L$ does not contain any irreducible character of degree $3$.
Then, $n^{\Bbb F}_3(G)=n^{\Bbb F}_3(G/L)$. Therefore, the inequality we have in  $(*)$ causes a contradiction.
	Now, assume  $L\cong \PSL_2(q)$, where $q=5,7$.
		Then, $L$  has exactly two irreducible characters of degree $3$. Also, $L$ has an irreducible character of degree $d_1$, where $d_1=5$ if $q=5$, and  $d_1=8$ if $q=7$,  which is extendable to a rational-valued irreducible character of  $G$.  Thus, using  Gallagher's theorem, we have $n^{\Bbb F}_1(G)=n^{\Bbb F}_1(G/L)\leq n^{\Bbb F}_{d_1}(G|L)\leq  n^{\Bbb F}_{d_1}(G)$.
		
	
	We prove  that  $n^{\Bbb F}_3(G|L)\leq 2n^{\Bbb F}_{d_1}(G)$. It is sufficient to show that $n^{\Bbb F}_3(G|L)\leq 2n^{\Bbb F}_{1}(G)$. Since $L$ does not have any irreducible
character of degree $2$ and any non-principal linear character, we deduce that
	$\chi_L$ is an irreducible character of $L$ for every irreducible character $\chi$ of  degree $3$ in $\Irr(G|L)$.
 Then, we know that $L$ has exactly $2$ irreducible characters of degree $3$, and so,  by Gallagher's theorem, there exist at most $2n^{\Bbb{ F}}_1(G)$ characters of degree $3$ in $\Irr(G|L)$, as wanted. Hence, looking at $(*)$, we have
	
				$$5n_3^{\Bbb F}(G)+13n_2^{\Bbb F}(G)\leq 10n^{\Bbb F}_{d_1}(G)+ \sum\limits_{  10\leq d\leq 14} 13n^{\Bbb F}_d(G)+ \sum\limits_{  d\geq  15} 18n^{\Bbb F}_d(G)\leq \sum\limits_{ d\geq d_1} (8d-29)n^{\Bbb F}_d(G),$$
				a contradiction.
	
	


	Hence,  every minimal normal subgroup of $G$ contained in $G'$ is abelian. Let $M \unlhd G$ be  minimal such
	that $M\leq G'$ is non-solvable. Then clearly $M$ is perfect and contained in the last term of the derived
	series of $G$. Choose a minimal normal subgroup $T$ of $G$ such that $T \leq M$ and when
	$	[M,R]\not =1$ we choose $T \leq  [M,R],$ where $R$ denotes the radical solvable subgroup of $M$ and,  if it is possible we assume $T$ has order 2 .
	We know that $T$ is abelian. It follows that
	$G/T$ is non-solvable since $G$ is non-solvable. By the minimality of $G$, we  have
	$	\acdk(G/T) \geq 29/8$. This implies that
	$\acdk(G) <  29/8 \leq \acdk(G/T)$. It follows that
	$$5n^{\Bbb F}_3(G/T)+13n^{\Bbb F}_2(G/T)\leq  \sum\limits_{ d\geq 4}(8d-29)n^{\Bbb F}_d(G/T)\leq \sum\limits_{d\geq 4}(8d-29)n^{\Bbb F}_d(G)< 13n^{\Bbb F}_2(G)+5n^{\Bbb F}_3(G).$$
	

	Hence, $\Irr_{\Bbb F}^{*}(G|T)$ contains a character $\chi$ of degree $2$ or $3$.
	So, Lemma \ref{cp} implies that there is a normal subgroup $C$ of $G$ such that $G=MC$ is a central product with central
	subgroup ${\bf Z}(M)=M\cap C$. Moreover, if $\chi(1)=3$, we have $ M\cong 3\cdot \rm{Alt}_6$ or $6 \cdot \rm{Alt}_6$, and
	if $\chi(1)=2$, then $ M\cong \SL_2(5)$. By the choice of $T$, we deduce that $M\cong 6\cdot \rm{Alt}_6$ does not occur
	as otherwise $|T|=2$ and $\Irr(M|T)$ does not contain  any character of degree $3$.  Therefore,  $\chi(1)=3$ happens only when $M\cong 3\cdot\rm{Alt}_6$.

	
	


	Let $G/C\cong \rm{Alt}_6$ and $T\cong {\bf Z}(M)$ is a cyclic group of order $3$.
	It follows from Lemmas \ref{A6} and \ref{cp0} that   $\acdk(G|T)\geq \acdk(M|T)=36/5 > 29/8$ (see Atlas\cite{atlas}).
	Recall that   minimality of $G$  implies that  $\acdk(G/T) > 29/8$. Therefore, $\acdk(G) > 29/8$, a contradiction. So, the proof is complete.
	
	Let $G/C\cong \rm{Alt}_5$. Then $T\cong {\bf Z}(M)$ is of order $2$,   $M\cong \SL_2(5)$, and  $\Irr(G|T)= \Irr(\lambda^G)$, where $\lambda$ is the only  non-trivial character of $T$.
	Note that $\Irr(M|T)=\Irr_{\Bbb R}(M|T)$ contains two characters of degree $2$, one of degree $4$ and one of degree $6$.
	We obtain from Lemma \ref{A5} that  either $\acd^*_{\mathbb{R}}(G|T)\geq 4$ or  $\Irr_{\mathbb{R}}(\lambda^G)=\Irr(\lambda^G)$. If the former case occurs,
	then as ${\rm acd}_{\Bbb{R}}^{*}(G/T) \geq  29/8$,  we deduce from Lemma \ref{later} that ${\rm acd}_{\Bbb{R}}^{*}(G) \geq  29/8$, which is a contradiction.
	So, we  assume the second case occurs.
By Lemma \ref{cp0}, we have
	$2=\chi(1)=\alpha(1)\beta(1)$ for some $\beta\in \Irr(C|T)$  and $\alpha\in \Irr(M|T)$.
	Since $M\cong \SL_2 (5)$,  $\alpha(1)\in\{2,4,6\}$, and hence $\alpha(1)=2$ and   $\beta(1)=1$. Therefore,   $\beta \in \Irr(C|T)$ is an extension of
	the unique non-principal linear character  $\lambda\in \Irr(T)$. Using Gallagher's theorem \cite[Theorem 6.17]{isaacs}, we  have   $n^{{\Bbb C}}_d(C|T) = n^{{\Bbb C}}_d(C/T)$ for each positive integer $d$.


	We show that  $\chi_C$ is not irreducible. In fact, otherwise $\chi_C\in \Irr(C|\lambda)$ and as $\lambda$ extends to $\beta \in \Irr(C)$, Gallagher's theorem yields that $\chi_C=\eta\beta$ for some $\eta\in \Irr(C/T)$.  Recalling that $G/T\cong C/T\times M/T$, we have  $\eta$ extends to $\eta_0\in \Irr(G)$. Now,  applying  \cite[Theorem 6.16]{isaacs},  $\Irr((\eta\beta)^G)=\{\eta_0\theta\ |\ \theta\in \Irr(\beta^G)\}$. Thus, as $\eta\beta$ extends to $G$, we deduce that  $\beta$ must be extendable to $G$, which is not possible as $M$ is perfect.  	
	 Therefore $\chi_C=2\beta$  for some linear character $\beta\in \Irr(C|\lambda)$. 	Noting  that $\chi\in \Irr_{\Bbb R}(G|T)$ is an extension of $\chi_M\in \Irr_{\Bbb R}(M|T)$,  Gallagher's Theorem yields that $\gamma\chi\in \Irr(G|T)$ for all $\gamma \in \Irr(G/M)=\Irr(C/T)$.  Moreover,  $(\gamma\chi)_C=2\gamma\beta$ for all  character $\gamma\in \Irr(C/T)$.   From $\Irr_{\Bbb R}(G|T)=\Irr(G|T)$, we infer that $\gamma\beta$  and $\beta$ are  real-valued. As linear characters do not vanish on any conjugacy classes, we deduce that $\gamma$ is real-valued for every character $\gamma\in \Irr(C/T)$. Hence $n^{\Bbb C}_d(C|T)=n^{\Bbb C}_d(C/T)=n^{\Bbb R}_d(C/T)$.

	Further,  $ \Irr_{\Bbb R}(G/T)=\{\alpha\beta \ |\  \alpha\in \Irr_{\Bbb R}(C/T), \beta\in \Irr_{\Bbb R}(M/T)=\Irr(M/T)\}$.
	Recall that $\Irr(M|T)$ contains exactly four elements, two of degree $2$, one of degree $4$ and one of degree $6$.	
	Also, note that  $\Irr(M/T)=\Irr_{\Bbb R}(M/T)$ contains exactly $5$ characters, one of degree $1$, two of degree 3, one of degree 4 and one of degree 5. Thus,  Lemma \ref{cp0} implies that
	
	$$	\acd^*_{{\Bbb R}}(G)=\frac{\sum\limits_{\theta\in \Irr_{\Bbb R}^*(G/T)}\theta(1)+\sum\limits_{\theta\in \Irr_{\Bbb R}^*(G|T)}\theta(1)}{|\Irr_{\Bbb R}^*(G|T)|+|\Irr_{\Bbb R}^*(G/T)|}=
	\frac{16\sum\limits_{d\geq 1}dn_d^{\Bbb R}(C/T)-n^{\Bbb R}_1(C/T)+14\sum\limits_{d\geq 1}dn^{\Bbb R}_d(C/T)}{5\sum\limits_{d\geq 1}n_d^{\Bbb R}(C/T)-n^{\Bbb R}_1(C/T)+4\sum\limits_{d\geq 1}n^{\Bbb R}_d(C/T)}$$$$
	=\frac{30\sum\limits_{d> 1}dn_d^{\Bbb R}(C/T)+29n^{\Bbb R}_1(C/T)}{9\sum\limits_{d> 1}n_d^{\Bbb R}(C/T)+8n^{\Bbb R}_1(C/T)}\geq
	\frac{60\sum\limits_{d> 1}n_d^{\Bbb R}(C/T)+29n^{\Bbb R}_1(C/T)}{9\sum\limits_{d> 1}n_d^{\Bbb R}(C/T)+8n^{\Bbb R}_1(C/T)}.
	$$
	Using Lemma \ref{later}, we see that $\acd^*_{\Bbb R}(G)\geq 29/8$, a  contradiction.  Now, let $\Bbb F=\Bbb C$. Note that $\Irr(C/T)$ might contain some characters that are not real-valued.   Again using Lemma \ref{cp0} we coclude that
	
	$$\acd^*_{{\Bbb C}}(G)=\frac{16\sum\limits_{d\geq 1}dn_d^{\Bbb C}(C/T)-n^{\Bbb C}_1(C/T)+ 14\sum\limits_{d\geq 1}dn^{\Bbb C}_d(C/T)}{5\sum\limits_{d\geq 1}n_d^{\Bbb C}(C/T)-n^{\Bbb C}_1(C/T)+4\sum\limits_{d\geq 1}n^{\Bbb C}_d(C/T)}
	=\frac{30\sum\limits_{d> 1}dn_d^{\Bbb C}(C/T)+29n^{\Bbb C}_1(C/T)}{9\sum\limits_{d> 1}n_d^{\Bbb C}(C/T)+8n^{\Bbb C}_1(C/T)}\geq 29/8,$$
	which leads us to  the final contradiction.
\end{proof}
\subsection{The average of non-linear rational irreducible characters. \\}

In this section, we prove part (3) of Theorem A.

\begin{lemma}\label{Nec} Let $M\unlhd G$  be minimal such that $M$ is non-solvable. Suppose that $M/N$ is  a chief factor of $G$ and $D/N=C_{G/N}(M/N)$. If $\varphi \in {\rm Irr}(D)$ such that ${\rm Irr}_{\mathbb{Q}}^*(\varphi^G) \neq \emptyset$, then either ${\rm acd}_{\mathbb{Q}}^*(\varphi^G) \geq 5$ or $MD \leq I_G(\varphi)$.\end{lemma}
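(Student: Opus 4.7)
The plan is to analyze the index $[G : I_G(\varphi)]$ using the permutation action of $G$ on the coset space $G/I_G(\varphi)$, exploiting the fact that $M$ is perfect.

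Set $T = I_G(\varphi)$. First I would observe that $D \leq T$ automatically: for any $d \in D$, conjugation by $d$ is an inner automorphism of $D$ and therefore fixes $\varphi$. Hence the hypothesis $MD \not\leq T$ reduces to $M \not\leq T$. So my task is to prove: if $M \not\leq T$, then $\acdk(\varphi^G) \geq 5$, where $\Bbb{F}=\Bbb{Q}$.

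The main step is to show that $M \not\leq T$ forces $[G:T] \geq 5$. Consider the transitive action of $G$ on the left coset space $G/T$, of size $n := [G:T]$. Its kernel is $\mathrm{core}_G(T) = \bigcap_{g\in G} T^g \trianglelefteq G$, and the action induces an embedding $G/\mathrm{core}_G(T) \hookrightarrow S_n$. Since $M \trianglelefteq G$ is minimal among non-solvable normal subgroups of $G$, $M$ is perfect (as pointed out in the proof of Lemma~\ref{cp}). If $n \leq 4$, then $S_n$ is solvable, so the image of the perfect group $M$ in $S_n$ must be trivial; that is, $M \leq \mathrm{core}_G(T) \leq T$, contradicting $M \not\leq T$. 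Therefore $[G:T] \geq 5$.

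The conclusion then follows from Clifford correspondence. Every element of $\Irr(\varphi^G)$ has the form $\mu^G$ with $\mu \in \Irr(\varphi^T)$; since $T$ stabilizes $\varphi$, the restriction $\mu_D$ is a multiple of $\varphi$, so $\mu(1) \geq \varphi(1) \geq 1$, and hence
\[
\mu^G(1) = [G:T]\,\mu(1) \geq 5.
\]
Thus every character in $\Irr(\varphi^G)$, and in particular every character in the non-empty set $\Irr^*_{\Bbb{Q}}(\varphi^G)$, has degree at least $5$, yielding $\acdk(\varphi^G) \geq 5$.

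There is no serious obstacle here; the only conceptual point is recognizing that the minimality of $M$ among non-solvable normal subgroups of $G$ makes $M$ perfect, which is precisely what rules out small indices via the coset action. The hypothesis $\Irr^*_{\Bbb{Q}}(\varphi^G)\neq \emptyset$ is only used to ensure the average is taken over a non-empty set; the field $\Bbb{F}=\Bbb{Q}$ itself plays no role in the degree estimate.
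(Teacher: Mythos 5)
Your proof is correct and follows essentially the same route as the paper: Clifford correspondence gives degree at least $[G:I_G(\varphi)]$, and if that index is at most $4$ the solvability of $G/\mathrm{core}_G(T)\leq \mathrm{Sym}_4$ forces $M\leq T$, while $D\leq T$ is automatic since $D\unlhd G$. The only (harmless) difference is that you derive $M\leq \mathrm{core}_G(T)$ directly from the perfectness of $M$, whereas the paper argues via the dichotomy $M\cap\mathrm{core}_G(T)\in\{N,M\}$ using that $M/N$ is a non-solvable chief factor.
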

\begin{proof}  Let $T=I_G(\varphi)$ and $n=[G:T]$. By Clifford correspondence, ${\rm Irr}(\varphi^G)=\{\alpha^G: \alpha \in {\rm Irr}(\varphi^T)\}$.  Consequently, $\chi(1)  \geq n$ for every $\chi \in {\rm Irr}(\varphi^G)$. So, if $n \geq 5$, then taking the fact ${\rm Irr}_{\mathbb{Q}}^*(\varphi^G) \neq \emptyset$ into account, ${\rm acd}^*_{\mathbb{Q}}(\varphi^G) \geq 5$, as desired. Recall that the assumption on $M$ forces $N$ to be solvable and hence, $M/N$ is non-solvable. Next, suppose that $n \leq 4$.   Then, $G/{\rm Core}_G(T) \leq {\rm Sym}_4$ is solvable, where ${\rm Core}_G(T)=\cap_{g \in G}T^g$. Since $N \leq T$ and $N \unlhd G$, we have $N \leq {\rm Core}_G(T)$. So, $N \leq M \cap {\rm Core}_G(T) \unlhd G$. However, $M \cap {\rm Core}_G(T) \leq  M$ and  $M/N $ is a chief factor of $G$. This implies that either $M \cap {\rm Core}_G(T)=N$ or $M \cap {\rm Core}_G(T)=M$. In the former case, $M/N=M/(M \cap {\rm Core}_G(T))\cong M {\rm Core}_G(T)/{\rm Core}_G(T) \leq G/{\rm Core}_G(T)$, which is a contradiction as $G/{\rm Core}_G(T)$ is solvable. So, $M=M \cap {\rm Core}_G(T) \leq T$. Also,  $\varphi \in {\rm Irr}(D)$. Hence,  $MD \leq I_G(\varphi)=T $, as wanted.
\end{proof}
\begin{lemma} \label{Q20} Let  $C,M \trianglelefteq G$, $M \cong {\rm Alt}_5$ or ${\rm Alt}_6$ and let $G=C \times M$. Then,  ${\rm acd}^*_{\mathbb{Q}}(\varphi^G) \geq 9/2$ for every $\varphi \in {\rm Irr}(C)$ with  ${\rm Irr}^*_{\mathbb{Q}}(\varphi^G) \neq \emptyset$. In particular, if $\varphi(1)>1$ or $M\cong {\rm Alt}_6 $, then ${\rm acd}^*_{\mathbb{Q}}(\varphi^G) >9/2$.
\end{lemma}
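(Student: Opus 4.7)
The plan is to exploit the direct product structure $G = C \times M$: every irreducible character of $G$ factors as $\varphi_0 \times \theta$ with $\varphi_0 \in \Irr(C)$ and $\theta \in \Irr(M)$, and by Frobenius reciprocity $\Irr(\varphi^G) = \{\varphi \times \theta : \theta \in \Irr(M)\}$, with $(\varphi \times \theta)(1) = \varphi(1)\theta(1)$. The key observation is that $\varphi \times \theta$ is rational-valued if and only if both $\varphi$ and $\theta$ are rational-valued: evaluation at $(c,1)$ gives $\theta(1)\varphi(c) \in \mathbb{Q}$ for all $c$, forcing $\varphi \in \Irr_\mathbb{Q}(C)$, and similarly for $\theta$, while the converse is immediate. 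In particular, the hypothesis $\Irr_\mathbb{Q}^*(\varphi^G) \neq \emptyset$ forces $\varphi$ to be rational-valued (using that $M$ has a non-trivial rational-valued irreducible character to produce a non-linear rational product).

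Once $\varphi \in \Irr_\mathbb{Q}(C)$ is established, I identify
$$\Irr_\mathbb{Q}(\varphi^G) = \{\varphi \times \theta : \theta \in \Irr_\mathbb{Q}(M)\},$$
and read off $\Irr_\mathbb{Q}(M)$ from the Atlas. For $M \cong \mathrm{Alt}_5$ the irrational rows are the two Galois-conjugate characters of degree $3$, so $\Irr_\mathbb{Q}(M)$ consists of three characters of degrees $1, 4, 5$. For $M \cong \mathrm{Alt}_6$ the irrational rows are the two pairs of degree $5$ and degree $8$ characters, so $\Irr_\mathbb{Q}(M)$ consists of three characters of degrees $1, 9, 10$.

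The remainder is a direct computation. If $M \cong \mathrm{Alt}_5$ and $\varphi(1)=1$, then $\Irr_\mathbb{Q}^*(\varphi^G)$ has exactly two members, of degrees $4$ and $5$, giving average $9/2$; if $\varphi(1) \geq 2$ all three products are non-linear, with degrees $\varphi(1), 4\varphi(1), 5\varphi(1)$, giving average $10\varphi(1)/3 \geq 20/3 > 9/2$. If $M \cong \mathrm{Alt}_6$ and $\varphi(1)=1$, the two non-linear rational products have degrees $9$ and $10$, giving average $19/2 > 9/2$; if $\varphi(1)\geq 2$ the average is $20\varphi(1)/3 \geq 40/3 > 9/2$. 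In every case the required lower bound holds, and strict inequality holds whenever $\varphi(1) > 1$ or $M \cong \mathrm{Alt}_6$, which proves the lemma together with its \emph{in particular} clause.

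There is no serious obstacle here: the only delicate point is verifying the rationality criterion for $\varphi \times \theta$ and correctly reading off $\Irr_\mathbb{Q}(\mathrm{Alt}_5)$ and $\Irr_\mathbb{Q}(\mathrm{Alt}_6)$ from the character tables, after which the result reduces to comparing the arithmetic mean of a small explicit list of integers against $9/2$.
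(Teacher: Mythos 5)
Your argument is essentially the paper's: the direct product structure gives $\Irr(\varphi^G)=\{\varphi\times\theta:\theta\in\Irr(M)\}$, rationality of $\varphi\times\theta$ forces $\varphi\in\Irr_{\mathbb{Q}}(C)$, and the bound reduces to $\acd^*_{\mathbb{Q}}(M)$ (when $\varphi(1)=1$) or $\varphi(1)\acd_{\mathbb{Q}}(M)$ (when $\varphi(1)\geq 2$), exactly as in the paper. One correction, though it is harmless for the conclusion: you misread the character table of ${\rm Alt}_6\cong{\rm PSL}_2(9)$. The two degree-$5$ characters there \emph{are} rational-valued (the relevant irrationality is $(1\pm\sqrt{9})/2$, which is an integer); only the degree-$8$ pair is irrational. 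So $\Irr_{\mathbb{Q}}({\rm Alt}_6)$ has degrees $1,5,5,9,10$, giving $\acd_{\mathbb{Q}}({\rm Alt}_6)=6$ and $\acd^*_{\mathbb{Q}}({\rm Alt}_6)=29/4$ as the paper states, rather than your $19/2$ and $20/3$; in every case the averages still exceed $9/2$, so the lemma and its \emph{in particular} clause survive.
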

\begin{proof} Let $\mu \in {\rm Irr}_{\mathbb{Q}}^*(\varphi^G) $. Since $G=C\times M$ and $\varphi\in {\rm Irr}(C)$, $I_G(\varphi)=G$. Thus, $\mu_C=e\varphi$ for some positive integer $e$. This yields that $\varphi \in  {\rm Irr}_{\mathbb{Q}}(C)$.    Therefore, either $\varphi(1) \geq 2$ and ${\rm  acd}_{\mathbb{Q}}^*(\varphi^{G})\geq 2{\rm acd}_{\mathbb{Q}}(M)$ or $\varphi(1) =1$ and ${\rm acd}_{\mathbb{Q}}^*(\varphi^G)={\rm acd}_{\mathbb{Q}}^*(M)$. However, ${\rm acd}_{\mathbb{Q}}({\rm Alt}_5)=10/3$, ${\rm acd}_{\mathbb{Q}}^*({\rm Alt}_5)=9/2$, ${\rm acd}_{\mathbb{Q}}({\rm Alt}_6)=6$ and ${\rm acd}_{\mathbb{Q}}^*({\rm Alt}_6)=29/4$, using Atlas \cite{atlas}. This implies that ${\rm acd}_{\mathbb{Q}}^*(\varphi^G) \geq 9/2$. Further, if $\varphi(1)>1$ or $M \cong {\rm Alt}_6$, then ${\rm acd}_{\mathbb{Q}}^*(\varphi^G) > 9/2$, as wanted.
\end{proof}
\begin{Remark}\label{rem1} {\rm Suppose that $N$ is a normal subgroup of $G$,  $\lambda \in {\rm Irr}(N)$ and  $ \chi \in {\rm Irr}_{\mathbb{Q}}(\lambda^G)$. Let $\varepsilon$ be the $|G|$th root of unity and $\sigma \in {\rm Gal}(\mathbb{Q}(\varepsilon)/\mathbb{Q})$. Then, $\chi^{\sigma}=\chi$. Also, $\lambda^{\sigma} \in {\rm Irr}((\chi^{\sigma})_N)$. Thus, $\lambda$ and $\lambda^{\sigma}$ are irreducible constituents of $\chi_N$. So, $\lambda$ and $\lambda^{\sigma}$ are $G$-conjugate. This implies that $\lambda^G=(\lambda^{\sigma})^G=(\lambda^G)^{\sigma} $. Let $\psi \in {\rm Irr}(\lambda^G)$. Then, $\psi^{\sigma},\psi \in {\rm Irr}(\lambda^G)$ and $\psi^{\sigma}(1)=\psi(1)$. If ${\rm Irr}(\lambda^G)$ has exactly one irreducible constituent of degree $\psi(1)$, then $\psi^{\sigma}=\psi$. This means that $\psi \in {\rm Irr}_{\mathbb{Q}}(\lambda^G)$ and  for every positive integer $m$, $\{\varphi \in {\rm Irr}(\lambda^G): \varphi(1)=m\}=\{\varphi^{\sigma}:\varphi \in {\rm Irr}(\lambda^G),~\varphi(1)=m\}$.}
\end{Remark}
Noting that $\acd^*_{\Bbb Q}(\rm {Alt}_5)=9/2$, in the following we prove that if $G$ has a chief factor  isomorphic to $\rm {Alt}_5$, then $\acd^*_{Q}(G)\geq 9/2$.
\begin{theorem} \label{Q1} Let $M\trianglelefteq G$  be minimal such that $M$ is non-solvable and let $M/N$ be a chief factor of $G$. If $M/N \cong {\rm Alt}_5$, then ${\rm acd}_{\mathbb{Q}}^*(G) \geq 9/2$.
\end{theorem}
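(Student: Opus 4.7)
The plan is to partition $\Irr^*_{\mathbb{Q}}(G)$ via the $G$-orbits on $\Irr(D)$, where $D/N:=C_{G/N}(M/N)$, and apply Lemma \ref{later}. Since $M/N\cong {\rm Alt}_5$ is simple non-abelian, $M\cap D=N$, so $MD/D\cong {\rm Alt}_5$ and $G/D$ embeds in $\Aut({\rm Alt}_5)={\rm Sym}_5$; hence $G/D\in\{{\rm Alt}_5,{\rm Sym}_5\}$ and $[G:MD]\in\{1,2\}$.

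By Lemma \ref{later}, it suffices to show $\acd^*_{\mathbb{Q}}(\varphi^G)\geq 9/2$ for each $\varphi\in\Irr(D)$ with $\Irr^*_{\mathbb{Q}}(\varphi^G)\neq\emptyset$. Lemma \ref{Nec} immediately handles any $\varphi$ with $MD\not\leq I_G(\varphi)$, since then $\acd^*_{\mathbb{Q}}(\varphi^G)\geq 5$. We may therefore assume $MD\leq I_G(\varphi)$, so $I_G(\varphi)\in\{MD,G\}$.

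For the remaining characters I will run a Clifford analysis above $\varphi$. When $\varphi$ is $G$-invariant, Gallagher's theorem (if $\varphi$ extends to $\tilde\varphi\in\Irr(G)$) or a character-triple isomorphism with the appropriate Schur cover (otherwise: $\SL_2(5)$ for $G/D\cong {\rm Alt}_5$, a double cover of ${\rm Sym}_5$ for $G/D\cong {\rm Sym}_5$) produces the degree multiset of $\Irr(\varphi^G)$: respectively $\varphi(1)\cdot\{1,3,3,4,5\}$ or $\varphi(1)\cdot\{1,1,4,4,5,5,6\}$ in the extending cases, and $\varphi(1)\cdot\{2,2,4,6\}$ or $\varphi(1)\cdot\{4,4,4,6,6\}$ in the non-extending cases. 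By Remark \ref{rem1}, any uniquely-attained degree in $\Irr(\varphi^G)$ must be rational; averaging just these forced-rational characters already gives $\geq 9/2$ in every subcase, with equality exactly when $\varphi(1)=1$, $G/D\cong {\rm Alt}_5$, and $\varphi$ extends (the extremal case arising from $G\cong {\rm Alt}_5$). The remaining case $I_G(\varphi)=MD\neq G$ is handled via Clifford induction, $\Irr(\varphi^G)=\{\eta^G:\eta\in\Irr(\varphi^{MD})\}$, with the $MD$-level degrees doubled; this yields $\varphi(1)\cdot\{2,6,6,8,10\}$ or $\varphi(1)\cdot\{4,4,8,12\}$, and Remark \ref{rem1} again produces enough rational constituents to push the average comfortably past $9/2$.

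The expected main obstacle is in the non-extending $G$-invariant ${\rm Alt}_5$-subcase with $\varphi(1)=1$: I must certify that the two degree-$2$ constituents of $\Irr(\varphi^G)$ remain Galois-conjugate (and hence not individually rational), for otherwise including them in the average would lower it from $5$ (obtained from the rational degree-$4$ and degree-$6$ characters) down to $14/4=7/2<9/2$. This is controlled by noting that the $\SL_2(5)$-irrationalities attached to its two degree-$2$ faithful characters---values involving $(1+\sqrt{5})/2$---are intrinsic to the central extension and persist through any character-triple isomorphism $(G,D,\varphi)\cong(\Gamma,A,\mu)$ with $\Gamma/A\cong {\rm Alt}_5$, so the pair remains Galois-conjugate in $\Irr(\varphi^G)$.
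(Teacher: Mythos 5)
Your overall architecture (partition $\Irr^*_{\mathbb{Q}}(G)$ by $G$-orbits on $\Irr(D)$, dispose of non-$MD$-invariant orbits via Lemma \ref{Nec}, then run Clifford theory orbit by orbit) matches the paper's, and your degree multisets are correct. But the orbit-by-orbit strategy itself has a genuine failure point, which is exactly why the paper's Case v is so involved. Consider $\varphi\in\Irr(D)$ linear, $G$-invariant, not extendible to $H=MD\neq G$: the multiset is $\varphi(1)\cdot\{4,4,4,6,6\}$ (one character induced from the fused degree-$2$ pair, two extensions of the degree-$4$, two extensions of the degree-$6$). No degree is uniquely attained, so Remark \ref{rem1} forces nothing, and it is entirely consistent with your information that $\Irr^*_{\mathbb{Q}}(\varphi^G)$ consists of precisely the three degree-$4$ characters, giving $\acd^*_{\mathbb{Q}}(\varphi^G)=4<9/2$. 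The paper does not claim the single-orbit bound here; instead it pairs each such orbit $(\varphi\theta)^G$ with the companion orbit $\theta^{G/N}$ (where $\theta$ runs over $\Irr(D/N)$) and proves $\acd^*_{\mathbb{Q}}((\varphi\theta)^G+\theta^{G/N})\geq 9/2$, the combined bound being tight (e.g.\ degrees $\{4,4,4\}$ against $\{4,4,5,5,6\}$ average to exactly $36/8$). Your claim that Remark \ref{rem1} "pushes the average comfortably past $9/2$" in this branch is false, and no refinement of the partition in Lemma \ref{later} that keeps orbits separate can work.

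Two further points. First, your resolution of the "main obstacle" ($G$-invariant linear $\varphi$ not extending, $G/D\cong{\rm Alt}_5$, multiset $\{2,2,4,6\}$) rests on the assertion that the $\SL_2(5)$-irrationalities "persist through any character-triple isomorphism." Character-triple isomorphisms preserve degrees and Clifford-theoretic data but emphatically not fields of values, so this is not a valid argument. The paper instead first passes to $G/(\ker\varphi\cap N)$ by induction and then applies Corollary \ref{cpc} to obtain an honest central product $G=MC$ with $M\cong\SL_2(5)$; the degree-$2$ constituents restrict irreducibly to the genuine subgroup $M$, and the character table of $\SL_2(5)$ has no rational degree-$2$ character. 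Second, "averaging just these forced-rational characters" is not the quantity you must bound: $\acd^*_{\mathbb{Q}}(\varphi^G)$ averages over \emph{all} rational non-linear constituents. In your extremal subcase ($\varphi$ linear and extending, $G/D\cong{\rm Alt}_5$, forced-rational degrees $4,5$ averaging exactly $9/2$) a single rational degree-$3$ constituent would drop the average to $4$, so you must also prove the degree-$3$ constituents are irrational (the paper gets this from the direct-product structure $G/N=D/N\times M/N$ in Lemma \ref{Q20}); it is provable, but it is a needed step, not a freebie.
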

\begin{proof} Set $D/N=C_{G/N}(M/N)$ and $H=MD$. As $\frac{G/N}{D/N}=\frac{G/N}{C_{G/N}(M/N)} \leq {\rm Aut}(M/N) \cong {\rm Sym}_5$, $[G:H]=[G/N:H/N] \leq 2$.  Let $\varphi_1, \ldots, \varphi_t$ be representatives of  the action of $G$ on ${\rm Irr}(D)$.  By Lemma \ref{Lemma 2.6}, we know ${\rm Irr}_{\mathbb{Q}}^*(G) \neq \emptyset$. Hence, since  ${\rm Irr}(G)= \dot{\cup}_{i=1}^t {\rm Irr}(\varphi_i^G)$, we get ${\rm Irr}_{\mathbb{Q}}^*(\varphi_i^G) \neq \emptyset$ for some $i \in \{1,\ldots,t\}$. If for every  $i \in \{1,\ldots,t\}$ either ${\rm Irr}_{\mathbb{Q}}^*(\varphi_i^G)=\emptyset$ or ${\rm acd}_{\mathbb{Q}}^*(\varphi_i^G) \geq 5$, then we get from Lemma \ref{later} that ${\rm acd}_{\mathbb{Q}}^*(G) \geq 5> 9/2$, as desired. Otherwise,  from Lemma \ref{Nec} there is  $\varphi \in \{\varphi_1, \ldots, \varphi_t\}$ such that ${\rm Irr}_{\mathbb{Q}}^*(\varphi^G) \neq \emptyset$ and $H \leq I_G(\varphi)$.  Recall that the assumption on $M$ implies that $M=M'$ and $N$ is solvable.   Suppose that $\varepsilon$ is the $|G|$th root of unity and $\sigma \in {\rm Gal}(\mathbb{Q}(\varepsilon)/\mathbb{Q})$. Now, we break the proof into the following cases:
	\\{\bf Case i.} Assume that $\varphi(1) >1$. First, let $\varphi$ be extendible to $\chi \in {\rm Irr}(H)$.  Then, Gallagher's theorem  \cite[Corollary 6.17]{isaacs} shows that ${\rm Irr}(\varphi^H)=\{\chi \psi: \psi \in {\rm Irr}(H/D)\}$. However, $H/D \cong M/N \cong {\rm Alt}_5$. So,  ${\rm Irr}(H/D)=\{\lambda_1,\ldots,\lambda_5\}$ such that $\lambda_1(1)=1$, $\lambda_2(1)=\lambda_3(1)=3$, $\lambda_4(1)=4$, $\lambda_5(1)=5$ and $\lambda_5$ is extendible to $\rho_5 \in {\rm Irr}^*_{\mathbb{Q}}({G/D})$. Thus, ${\rm Irr}^*_{\mathbb{Q}}(\varphi^G)=\cup_{i=1}^5{\rm Irr}^*_{\mathbb{Q}}((\chi\lambda_i)^G)$. We have
	\begin{eqnarray}\label{103}
	\alpha(1) \geq 6~{\rm for~ every~}\alpha\in \cup_{i=2}^5{\rm Irr}^*_{\mathbb{Q}}((\chi\lambda_i)^G).
	\end{eqnarray} Therefore, if ${\rm Irr}^*_{\mathbb{Q}}((\chi\lambda_1)^G)=\emptyset$, then ${\rm acd}_{\mathbb{Q}}^*(G) \geq 6$. Now, let ${\rm Irr}^*_{\mathbb{Q}}((\chi\lambda_1)^G)\neq \emptyset$.  Recall that  $I_G(\chi\lambda_1)=I_G(\chi)$,  $(\chi \lambda_1)\lambda_5=\chi\lambda_5 \in {\rm Irr}(H)$ and $\lambda_5$ is extendible to $\rho_5 \in {\rm Irr}^*_{\mathbb{Q}}({G/D})$. If $I_G(\chi)=G$,  we have   ${\rm acd}^*_{\mathbb
		{Q}}((\chi\lambda_1)^G+(\chi\lambda_5)^G)\geq  (\varphi(1)+5\varphi(1))/2\geq 6$ applying Lemma \ref{Lemma 2.2}(ii) when $H \neq G$ and a straight calculation when $H=G$. If $I_G(\chi)\neq G$, then $I_G(\varphi)=I_G(\chi\lambda_1)=I_G(\chi\lambda_5)=H$ regarding $[G:H]=2$. Thus, ${\rm Irr}((\chi\lambda_i)^G) =\{(\chi\lambda_i)^G\}$ for  $i \in \{1,5\}$. However, $(\chi\lambda_1)^G$ and $(\chi\lambda_5)^G$ are the only elements in  ${\rm Irr}(\varphi^G)$ of degrees $2\varphi(1)$ and $10\varphi(1)$, respectively. So, $(\chi\lambda_1)^G, (\chi\lambda_5)^G \in {\rm Irr}^*_{\mathbb{Q}}(\varphi^G)$. This signifies that ${\rm acd}^*_{\mathbb
		{Q}}((\chi\lambda_1)^G+(\chi\lambda_5)^G)\geq (2+10)/2=6$.   As ${\rm Irr}^*_{\mathbb{Q}}(\varphi^G)=\dot\cup_{i \in \{1,5\}}{\rm Irr}^*_{\mathbb{Q}}((\chi\lambda_i)^G) \dot\cup (\cup_{i=2}^4 {\rm Irr}^*_{\mathbb{Q}}((\chi\lambda_i)^G))$, Lemma \ref{later} and \eqref{103} lead to ${\rm acd}_{\mathbb{Q}}^*(\varphi^{G}) \geq 6>9/2$, as desired.  Next, assume that $\varphi$ is not extendible to $H$. Let $I_G(\varphi)=H$. Then,  $(H,D,\varphi)$ is a character triple and $H/D \cong M/N \cong {\rm Alt}_5$. Hence,  $\varphi^H=e_1 \psi_1+e_2 \psi_2+e_3\psi_3+e_4\psi_4$, where $e_i$ is a positive integer and $\psi_i \in {\rm Irr}(\varphi^H)$ for every $i \in \{1,2,3,4\}$.  Also, $\psi_1(1)=\psi_2(1)=2 \varphi(1)$, $\psi_3(1)=4\varphi(1)$ and $\psi_4(1)=6\varphi(1)$. If $G=H$, then Remark \ref{rem1} yields that  $\psi_3, \psi_4 \in {\rm Irr}_{\mathbb{Q}}^*(\varphi^G)$ and $\{\psi_1^{\sigma}, \psi_2^{\sigma}\}=\{\psi_1,\psi_2\}$. Therefore,  ${\rm acd}^*_{\mathbb{Q}}(\varphi^G) \geq 2(2+2+4+6)/4=14/2=7$ or ${\rm acd}^*_{\mathbb{Q}}(\varphi^G)\geq 2(4+6)/2=10$. If $H \neq G$, then   Clifford correspondence shows that  $\varphi^G=e_1 \psi_1^G+e_2 \psi_2^G+e_3\psi_3^G+e_4\psi_4^G$. Since ${\rm Irr}_{\mathbb{Q}}^*(\varphi^G) \neq \emptyset$, Remark \ref{rem1} forces  $\psi_3^G, \psi_4^G \in {\rm Irr}_{\mathbb{Q}}^*(\varphi^G)$ and $\{(\psi_1^G)^{\sigma},(\psi_2^G)^{\sigma}\}=\{\psi_1^G,\psi_2^G\}$. Consequently, ${\rm acd}_{\mathbb{Q}}^*(\varphi^G)\geq(4+4+8+12)\varphi(1)/4=13\varphi(1)/2$ or ${\rm acd}_{\mathbb{Q}}^*(\varphi^G)\geq (8+12)\varphi(1)/2=10\varphi(1)$. Finally, let $H \neq G$ and $I_G(\varphi) = G$. Then,  $(G,D,\varphi)$ is a character triple and $G/D  \cong {\rm Sym}_5$. Since  all non-linear  irreducible characters of the central extensions of ${\rm Sym}_5$ are of degrees at least $4$, we get  ${\rm acd}_{\mathbb{Q}}^*(\varphi^G) \geq 8$.  Nevertheless, in this case,     ${\rm acd}_{\mathbb{Q}}^*(\varphi^G)> 9/2.$
	\\{\bf Case ii.} Assume $H=G$ and $\varphi(1)=1$.  If $\varphi$ is extendible to $\chi \in {\rm Irr}(G)$, then $G' \leq {\rm ker}\chi$. As $N \leq M=M' \leq G'$, we get $N \leq {\rm ker}\varphi={\rm ker}\chi \cap D$.  Recall that $G/N=M/N \times D/N$, $\varphi \in  {\rm Irr}(D/N)$ and ${\rm Irr}^*_{\mathbb{Q}}(\varphi^G) \neq\emptyset$.  Therefore, Lemma \ref{Q20} guarantees  ${\rm acd}_{\mathbb{Q}}^*(\varphi^G)={\rm acd}_{\mathbb{Q}}^*(\varphi^{G/N}) \geq 9/2$.
	
	Next, suppose that $\varphi$ is not extendible to $G$. Since $G/N=M/N \times D/N$, we get that $N \not\leq {\rm ker}\varphi$. Then,  $(G,D,\varphi)$ is a character triple and $G/D \cong M/N \cong {\rm Alt}_5$. Hence, $\varphi^G=e_1 \psi_1+e_2 \psi_2+e_3\psi_3+e_4\psi_4$, where $e_i$ is a positive integer and $\psi_i \in {\rm Irr}(\varphi^G)$ for every $i \in \{1,2,3,4\}$. Also, $\psi_1(1)=\psi_2(1)=2 $, $\psi_3(1)=4$ and $\psi_4(1)=6$.
	By Remark \ref{rem1},  $\psi_3, \psi_4 \in {\rm Irr}_{\mathbb{Q}}^*(\varphi^G)$ and $\{\psi_1^{\sigma}, \psi_2^{\sigma}\}=\{\psi_1,\psi_2\}$.   Hence, $G$ admits an irreducible character $\alpha$ of degree $2$ such that $\alpha \in {\rm Irr}(\varphi^G)$.  Set $N_0={\rm ker}\varphi \cap N$. Recall that ${\rm ker}\varphi={\rm ker}\alpha \cap D \lhd G$ because $I_G(\varphi)=G$. Thus, $N_0 \unlhd G$ and $\varphi \in {\rm Irr}(G/N_0)$. If $N_0 \neq 1$, then since $G/N_0$ satisfies the assumption of the theorem  as $M/N_0\unlhd G/N_0 $ is minimal such that $M/N_0$ is non-solvable and $\frac{M/N_0}{N/N_0} \cong {\rm Alt}_5$, and  ${\rm acd}^*_{\mathbb{Q}}(\varphi^G)={\rm acd}^*_{\mathbb{Q}}(\varphi^{G/N_0})$, the proof follows from induction. Next, let $N_0=1$.  This shows that ${\rm ker}\alpha \cap N=1$. By Corollary \ref{cpc},  $M \cong {\rm SL}_2(5)$. This forces $N={\bf Z}(M)$. 
	If $\psi_i \in {\rm Irr}_{\mathbb{Q}}(\varphi^G)$ for some $i \in \{1,2\}$,    then  $(\psi_i)_{M} \in {\rm Irr}_{\mathbb{Q}}(M)$ is of degree $2$, which is impossible because of the character table of ${\rm SL}_2(5)$. Therefore, ${\rm acd}^*_{\mathbb{Q}}(\varphi^G)=(4+6)/2=5$. Nevertheless, in this case,
	$ {\rm acd}_{\mathbb{Q}}^*(\varphi^G) \geq 9/2$.
	\\{\bf Case iii.}
	Assume $\varphi(1) =1$, $H \neq G$, $I_G(\varphi)=G$ and $\varphi$ is extendible to $H$.   As stated in Case ii,    $N \leq {\rm ker}\varphi$ because $\varphi$ is extendible to $H$ and $\varphi(1) =1$. Recall that $H/N = D/N \times M/N$. So,  ${\rm Irr}(\varphi^{H})={\rm Irr}(\varphi^{H/N})=\{\varphi \lambda_i: 1 \leq i\leq 5\}$ such that $\lambda_i \in {\rm Irr}(M/N)$ for every $1 \leq i \leq 5$, $\lambda_1(1)=1$, $\lambda_2(1)=\lambda_3(1)=3$, $\lambda_4(1)=4$ and $\lambda_5(1)=5$, considering the character table of ${\rm Alt}_5$ and the fact ${\rm Alt}_5 \cong M/N$. In view of Remark \ref{note1}, $\lambda_4,\lambda_5$ are extendible to $\rho_4,\rho_5 \in {\rm Irr}_{\mathbb{Q}}(G/N)$, respectively. Recall that $(\varphi\lambda_1)\lambda_i=\varphi\lambda_i \in {\rm Irr}(H/N)$ and $I_{G/N}(\varphi\lambda_1)=G/N$. It follows from  \cite[Theorem 6.16]{isaacs} that
	\begin{eqnarray} \label{Q1iia}
	{\rm Irr}_{\mathbb{Q}}((\varphi\lambda_i)^{G/N})=\{\rho \rho_i:\rho \in {\rm Irr}_{\mathbb{Q}}((\varphi\lambda_1)^{G/N})\},~{\rm for~every~}i \in \{4,5\}.
	\end{eqnarray} Also, $I_{G/N}(\lambda_2)=I_{G/N}(\lambda_3)=H/N$ and $I_{G/N}(\varphi)=G/N$. Thus, $$I_{G/N}(\varphi\lambda_2)=I_{G/N}(\varphi\lambda_3)=H/N.$$ Hence, $(\varphi\lambda_2)^{G/N}=(\varphi\lambda_3)^{G/N} $ is the only element of ${\rm Irr}(\varphi^{G/N})$ of degree $6$. Remark \ref{rem1} forces $(\varphi\lambda_2)^{G/N}  \in {\rm Irr}_{\mathbb{Q}}(\varphi^{G/N})$. As $\varphi(1)=1$, ${\rm Irr}^*_{\mathbb{Q}}((\varphi\lambda_1)^{G/N})=\emptyset$.  Put $a=|{\rm  Irr}_{\mathbb{Q}}((\varphi\lambda_1)^{G/N})|$. Regarding $[G/N:H/N]=[G:H]=2$,  we obtain that $ a \leq 2$.  We observe ${\rm Irr}_{\mathbb{Q}}(\varphi^{G/N})=\dot{\cup}_{i \in \{1,2,4,5\}}{\rm Irr}_{\mathbb{Q}}((\varphi\lambda_i)^{G/N})$. Now, \eqref{Q1iia} guarantees  ${\rm acd}_{\mathbb{Q}}^*(\varphi^G)={\rm acd}^*_{\mathbb{Q}}(\varphi^{G/N})= (4a+5a+6)/(2a+1) >9/2$.
	\\{\bf Case iv.} Assume $
	\varphi(1)=1$ and  $I_G(\varphi) \neq G$. By our former assumption,  $H \leq I_G(\varphi)$. Recall that $[G:H] \leq 2$.  So, $I_G(\varphi)=H \neq G$. It follows from Clifford correspondence  that
	\begin{eqnarray} \label{Q1iiia}
	\alpha^G \in {\rm Irr}(G) ~{\rm for ~every}~\alpha \in {\rm Irr}(\varphi^H).
	\end{eqnarray} If $\varphi$ is extendible to $H$, then as mentioned in Case ii,  $N \leq {\rm ker} \varphi$. Thus, $\varphi \in {\rm Irr}(D/N)$. Recall that $H/N=D/N \times M/N$, $M/N \cong {\rm Alt}_5$ and ${\rm Irr}({\rm Alt}_5)=\{\lambda_1,\ldots,\lambda_5\}$ such that $\lambda_1=1_{{\rm Alt}_5}$, $\lambda_2(1)=\lambda_3(1)=3$, $\lambda_4(1)=4$ and $\lambda_5(1)=5$.  It follows from  \eqref{Q1iiia} that ${\rm Irr}(\varphi^{G/N})=\{(\varphi\lambda_i)^{G/N}: 1 \leq i \leq 5\}$. Since  $(\varphi\lambda_1)^{G/N}$, $(\varphi\lambda_4)^{G/N}$ and $ (\varphi\lambda_5)^{G/N}$ are the only elements of ${\rm Irr}(\varphi^{G/N})$ of degrees $2$, $8 $ and $10 $, respectively, we get from Remark \ref{rem1} that $ (\varphi\lambda_1)^{G/N},(\varphi\lambda_4)^{G/N}, (\varphi\lambda_5)^{G/N} \in {\rm Irr}^*_{\mathbb{Q}}(\varphi^{G/N})$. Set $a=|\{(\varphi\lambda_2)^{G/N},(\varphi\lambda_3)^{G/N}\} \cap {\rm Irr}_{\mathbb{Q}}(\varphi^{G/N})|$. Clearly, $a \leq 2$. Therefore, ${\rm acd}^*_{\mathbb{Q}}(\varphi^G)={\rm acd}^*_{\mathbb{Q}}(\varphi^{G/N})=[G:H] (1+6a+4+5)/(3+a) \geq 2(10/3)>9/2 $, as desired.   Next, suppose that $\varphi$ is not extendible to $H$. Then,  $(H,D,\varphi)$ is a character triple. Consequently,  $\varphi^{H}=e_1 \psi_1+e_2 \psi_2+e_3\psi_3+e_4\psi_4$, where $e_i$ is a positive integer and $\psi_i \in {\rm Irr}(\varphi^H)$ for every $i \in \{1,2,3,4\}$. Also, $\psi_1(1)=\psi_2(1)=2 $, $\psi_3(1)=4$ and   $\psi_4(1)=6$.
	By \eqref{Q1iiia},  $\psi_i^G \in {\rm Irr}(G)$ for every $i \in \{1,2,3,4\}$. Therefore, $\varphi^G=e_1 \psi_1^G+e_2 \psi_2^G+e_3\psi_3^G+e_4\psi_4^G$. Since ${\rm Irr}_{\mathbb{Q}}^*(\varphi^G) \neq \emptyset$, Remark \ref{rem1} forces  $\psi_3^G, \psi_4^G \in {\rm Irr}_{\mathbb{Q}}^*(\varphi^G)$ and $\{(\psi_1^G)^{\sigma},(\psi_2^G)^{\sigma}\}=\{\psi_1^G,\psi_2^G\}$. Consequently, ${\rm acd}_{\mathbb{Q}}^*(\varphi^G)=(4+4+8+12)/4=7$ or ${\rm acd}_{\mathbb{Q}}^*(\varphi^G)=(8+12)/2=10$. Nevertheless,    ${\rm acd}_{\mathbb{Q}}^*(\varphi^G)> 9/2.$
	\\{\bf Case v.}
	Assume  $H \neq G$, $I_G(\varphi)=G  $, $\varphi(1) =1 $, and $\varphi $ is not extendible to $H$. If $N\leq\ker\varphi$, then since $H/N = D/N \times M/N$, we see that $\varphi $ is extendible to $H$, which is a contradiction. Hence, $\varphi\in {\rm Irr}(D|N)$. Let $\gamma\in {\rm Irr}^{*}_{\Bbb{Q}}(\varphi^{G})$. Then as $I_G(\varphi)=G$, we get that $\gamma_{D}=e\varphi$ for some positive integer $e$. This implies that $\varphi \in {\rm Irr}_{\mathbb{Q}}(D|N)$. First, let ${\rm ker}\varphi \cap N=1$. Assume that $M_1\unlhd H$ is non-solvable such that $M_1 \leq M$. Then,  $1\neq M_1N/N \unlhd M/N \cong {\rm Alt}_5$. Thus, $M_1N=M$. This shows that $M/(M_1\cap N)=M_1/(M_1 \cap N) \times N/(M_1 \cap N)$. However, $M=M'$ and $N$ is solvable. Consequently, $N/(M_1 \cap N)=1$, which means that $M_1=M$. Therefore, $M\unlhd H$ is minimal such that $M$ is non-solvable. Since $(H,D,\varphi)$ is a character triple and $H/D \cong M/N \cong {\rm Alt}_5$, we get that ${\rm Irr}(\varphi^H)$ has an element as $\mu$ of degree $2$. Then, ${\rm ker}\mu\cap N={\rm ker}\varphi \cap N=1$ considering $I_G(\varphi)=G$. Hence,  Corollary \ref{cpc} allows us to assume that $M \cong {\rm SL}_2(5)$, $D=C_H(M)$ and $N=M \cap D={\bf Z}(M)$ is a cyclic group of order $2$. Thus, ${\rm Irr}(D|N)={\rm Irr}(\lambda^D)$, where $\lambda$ is the only non-principal  character of $N$. Also,  $\varphi(1)=\lambda(1)=1$. Hence,  $\lambda$ is extendible to $\varphi$.  Therefore,   \cite[Corollary 6.17]{isaacs} yields that ${\rm Irr}(\lambda^D)=\{\varphi \theta : \theta \in {\rm Irr}(D/N)\}$. Let $\theta_1, \ldots,\theta_n$ be representatives of the action of $G/N$ on ${\rm Irr}(D/N)$.   So,
	$
	\{\varphi_1,\ldots,\varphi_t\} = \{\varphi\theta_1, \ldots,\varphi\theta_n,\theta_1, \ldots,\theta_n\}.
	$
	Assume that $\theta \in \{\theta_1, \ldots,\theta_n\}$ is linear  such that  $I_G(\varphi\theta)=G$ and there exists a character  $\alpha \in {\rm Irr}^*_{\mathbb{Q}}((\varphi\theta)^G)$. We claim that  ${\rm acd}^*_{\mathbb{Q}}((\varphi\theta)^{G}+\theta^{G/N})\geq 9/2$.   We have  $\alpha_D=e\varphi\theta$ for some positive  integer $e$. Hence, $\varphi\theta \in {\rm Irr}_{\mathbb{Q}}(D)$. However, $I_G(\varphi)=G$,  $\varphi \in {\rm Irr}_{\mathbb{Q}}(D)$ is linear and the linear characters do not vanish on any conjugacy classes.  Therefore, $\theta \in {\rm Irr}_{\mathbb{Q}}(D/N)$ and $I_{G/N}(\theta)=G/N$.
	Observe that  $(H,D,\varphi\theta)$ is a character triple and $H/D \cong M/N \cong {\rm Alt}_5$. Hence, $(\varphi\theta)^{H}=e_1 \psi_1+e_2\psi_2+e_3\psi_3+e_4\psi_4$, where $e_i$ is a positive integer and $\psi_i \in {\rm Irr}((\varphi\theta)^H)$ for every $i \in \{1,2,3,4\}$. Also, $\psi_1(1)=\psi_2(1)=2 $, $\psi_3(1)=4$ and  $\psi_4(1)=6$. As $(G,D,\varphi)$ is a character triple and $G/D \cong {\rm Sym}_5$, we can check that  $I_G(\psi_1)=I_G(\psi_2)=H$. Hence, $\psi_1^G=\psi_2^G \in {\rm Irr}((\varphi\theta)^G)$. This signifies that ${\rm Irr}((\varphi\theta)^G)=\{\psi_1^G\}\dot\cup{\rm Irr}(\psi_3^G)\dot\cup{\rm Irr}(\psi_4^G)$. Recall that
\begin{eqnarray}\label{645}
	&&\psi_1^G(1)=\psi_2^G(1)=4,\\\nonumber
	&&4 \text{ divides } \alpha(1)  {\rm ~for~ every~} \alpha \in {\rm Irr}(\psi_3^G),\\\nonumber
	&&6 \text{ divides } \alpha(1)~{\rm  for~ every~} \alpha \in {\rm Irr}(\psi_4^G).
	\end{eqnarray} Set
	\begin{eqnarray}\nonumber
	a_{\theta}&=&\left\{\begin{array}{ll}
	1, & \text{ if } \psi_1^G=\psi_2^G \in {\rm Irr}^*_{\mathbb{Q}}(G)\\
	0, & \text{ if } \psi_1^G=\psi_2^G \not\in {\rm Irr}^*_{\mathbb{Q}}(G)
	\end{array},\right.\\ \nonumber
	b_{\theta}&=&\left\{\begin{array}{ll}
	1, & \text{ if there exists } \chi \in {\rm Irr}^*_{\mathbb{Q}}(\psi_3^G)~\text{such that } \chi(1)=4 \\
	0, & \text{ if there exists no element in }  {\rm Irr}^*_{\mathbb{Q}}(\psi_3^G)~\text{of degree } 4.
	\end{array}\right. .
	\end{eqnarray}	
	Further, let
	\begin{eqnarray}\nonumber
	b'_{\theta}&=&|\{\chi \in {\rm Irr}^*_{\mathbb{Q}}(\psi_3^G): \chi(1)=4\}|\\\nonumber
	c'_{\theta}&=&| \{\chi \in {\rm Irr}^*_{\mathbb{Q}}(\psi_3^G): \chi(1)>4\}|+|  {\rm Irr}^*_{\mathbb{Q}}(\psi_4^G)|.
	\end{eqnarray} If there exists $\chi\in {\rm Irr}^*_{\mathbb{Q}}(\psi_3^G)$ of degree $4$, then $\chi(1)=\psi_3(1)$. Hence, $\chi_H=\psi_3$. It follows from  \cite[Theorem 6.17]{isaacs} that ${\rm Irr}^*_{\mathbb{Q}}(\psi_3^G)=\{\chi\rho:\rho \in {\rm Irr}_{\mathbb{Q}}(G/H)\}$. However, $[G:H]=2$. Thus,
	\begin{eqnarray}\label{1237}
	b'_{\theta}=|{\rm Irr}_{\mathbb{Q}}(G/H)|b_{\theta}=2b_{\theta}.
	\end{eqnarray}
	On the other hand, $\theta \in {\rm Irr}(D/N)$ and $H/N=M/N \times D/N$. As $M/N \cong {\rm Alt}_5$,  \begin{eqnarray}\nonumber
	{\rm Irr}(M/N)=\{\lambda_1,\lambda_2,\lambda_3,\lambda_4,\lambda_5\}
	\end{eqnarray}
	such that $\lambda_1=1_{M/N}$, $\lambda_2(1)=\lambda_3(1)=3$, $\lambda_4(1)=4$ and $\lambda_5(1)=5$. Also, $\lambda_2+\lambda_3$ is a rational-valued  character and in light of  Remark \ref{note1},  $\lambda_4,\lambda_5$ are extendible to $\rho_4,\rho_5 \in{\rm Irr}_{\mathbb{Q}}(G)$, respectively.   We have $(\theta\lambda_1)\lambda_i=\theta\lambda_i \in {\rm Irr}(H/N)$ for every $1 \leq i \leq 5$.  Since $I_G(\theta\lambda_1)=G$,   \cite[Theorem 6.16]{isaacs} forces  \begin{eqnarray} \label{eq3a}
	{\rm Irr}^*_{\mathbb{Q}}((\theta\lambda_i)^{G/N})=\{\rho\rho_i:\rho \in {\rm Irr}_{\mathbb{Q}}((\theta\lambda_1)^{G/N})\}~ {\rm for~}~i \in \{4,5\}
	\end{eqnarray}
	and
	\begin{eqnarray}\label{630}
	|{\rm Irr}^*_{\mathbb{Q}}((\theta\lambda_i)^{G/N})|= |{\rm Irr}_{\mathbb{Q}}((\theta\lambda_1)^{G/N})|~{\rm for~}~i \in \{4,5\}.
	\end{eqnarray}
	Regarding  $I_{G/N}(\lambda_2)=H/N \unlhd G/N$, $\lambda_2+\lambda_3$ is rational-valued, $I_{G/N}(\theta)=G/N$ and $\theta \in {\rm Irr}_{\mathbb{Q}}(D/N)$, we have $(\theta\lambda_2)^{G/N}=(\theta\lambda_3)^{G/N} \in {\rm Irr}^*_{\mathbb{Q}}(G/N)$ is of degree  $6$.  This signifies that
	\begin{eqnarray} \label{638}
	{\rm Irr}((\varphi\theta)^G) \cup {\rm Irr}(\theta^{G})=\{\psi_1^G\}\dot\cup(\dot\cup_{i \in \{3,4\}} {\rm Irr}(\psi_i^G)) \dot\cup\{(\theta\lambda_2)^{G/N}\}\dot\cup(\dot\cup_{i\in \{1,4,5\}} {\rm Irr}((\theta\lambda_i)^{G/N})).
	\end{eqnarray}
	Now, set
	\begin{eqnarray}\nonumber
	B_1&=&\{\gamma  \in {\rm Irr}(D/N):  \gamma(1)=1 ~{\rm and ~} \gamma \lambda_1~{\rm is~extendible~to~an~element~in~}{\rm Irr}_{\mathbb{Q}}(G/N) \}\\ \nonumber
	B_2&=&{\rm Irr}(D/N)-B_1.
	\end{eqnarray}
	Recall that $\theta(1)=1$. We have the following sub-cases:
	\\{\bf Sub-case a.}  Assume that $\theta \in B_2$.
	As $\theta(1)=1$, we deduce that $\theta\lambda_1$ is not extendible to any element of ${\rm Irr}_{\mathbb{Q}}(G/N)$. Therefore, $\beta(1) \geq 2$ for every $\beta \in {\rm Irr}_{\mathbb{Q}}((\theta\lambda_1)^{G/N})$ (if exists). Hence, $|{\rm Irr}^*_{\mathbb{Q}}((\theta\lambda_1)^{G/N})|=|{\rm Irr}_{\mathbb{Q}}((\theta\lambda_1)^{G/N})|$. Also, it follows from \eqref{eq3a} that $\alpha(1) \geq 8$ for every $\alpha \in {\rm Irr}^*_{\mathbb{Q}}((\theta\lambda_4)^{G/N})\cup {\rm Irr}^*_{\mathbb{Q}}((\theta\lambda_5)^{G/N})$ (if exists). Thus, \eqref{645}, \eqref{1237}, \eqref{630} and \eqref{638}   signify  that $${\rm acd}^*_{\mathbb{Q}}((\varphi\theta)^{G}+\theta^{G/N}) \geq  \frac{ 4a_{\theta}+4(2b_{\theta})+6c'_{\theta}+6+(8+8+2)|{\rm Irr}^*_{\mathbb{Q}}((\theta\lambda_1)^{G/N})|}{a_{\theta}+2b_{\theta}+c'_{\theta}+1+3|{\rm Irr}^*_{\mathbb{Q}}((\theta\lambda_1)^{G/N})|} \geq 9/2.$$
	\\{\bf Sub-case b.}  Assume that $\theta \in B_1$.
	So,  we have ${\rm Irr}^*_{\mathbb{Q}}((\theta\lambda_1)^{G/N})=\emptyset$ and as $[G:H]=2$,  ${\rm Irr}_{\mathbb{Q}}((\theta\lambda_1)^{G/N})$ contains two linear characters. Therefore, \eqref{645}, \eqref{1237}, \eqref{630} and \eqref{638}    show that $${\rm acd}^*_{\mathbb{Q}}((\varphi\theta)^{G}+\theta^{G/N}) \geq \frac{ 4a_{\theta}+4(2b_{\theta})+6c'_{\theta}+6+2(4)+2(5)}
	{a_{\theta}+2b_{\theta}+c'_{\theta}+1+2+2}\geq 9/2,$$ as claimed.
	
	We get from the above statements that if ${\rm ker}\varphi \cap N =1$, then ${\rm acd}^*_{\mathbb{Q}}((\varphi\theta)^{G}+\theta^{G/N})\geq 9/2$ for every $\theta\in \{\theta_1, \ldots, \theta_n\}$ such that $I_G(\varphi\theta)=G$, $\varphi\theta(1)=1$ and ${\rm Irr}^*_{\mathbb{Q}}((\varphi\theta)^G)\neq \emptyset$.
	Next, let ${\rm ker}\varphi \cap N \neq 1$. Then, $ \{\varphi\theta_1, \ldots,\varphi\theta_n,\theta_1, \ldots,\theta_n\}\subseteq
	\{\varphi_1,\ldots,\varphi_t\}.$  Set $K={\rm ker}\varphi \cap N$. Regarding $I_G(\varphi)=G$, we infer that $K \trianglelefteq G$. Replacing $G/K$ by $G$ in the above statement, we get that $M/K \cong {\rm SL}_2(5)$ and  if $\theta \in \{\theta_1,\ldots,\theta_n\}$ such that $I_{G/K}(\varphi\theta)=G/K$,  $\varphi\theta(1)=1$ and ${\rm Irr}^*_{\mathbb{Q}}((\varphi\theta)^G) \neq \emptyset$, then ${\rm Irr}^*_{\mathbb{Q}}(\theta^{G/N}) \neq \emptyset$ and ${\rm acd}^*_{\mathbb{Q}}((\varphi\theta)^{G}+\theta^{G/N})={\rm acd}^*_{\mathbb{Q}}((\varphi\theta)^{G/K}+\theta^{G/N}) \geq 9/2$.  If $\psi \in {\rm Irr}(D|N)-{\rm Irr}(D/K)$ such that $\psi(1)=1$ and $I_G(\psi)=G$, then $\psi_1=\psi_K \in {\rm Irr}(K)-\{1_{K}\}$ and $I_G(\psi_1)=G$. Hence, ${\rm ker}\psi_1 \unlhd G$ and $K/{\rm ker}\psi_1 \leq {\bf Z}(G/{\rm ker}\psi_1)$. Since $K/{\rm ker}\psi_1 \leq {\bf Z}(M/{\rm ker}\psi_1)$ and $M/{\rm ker}\psi_1$ is perfect, we deduce that $M/{\rm ker}\psi_1$ is  a Schur representation group for $M/K \cong {\rm SL}_2(5)$.  This forces ${\rm ker}\psi_1=K$, a contradiction. This implies that for every  $\psi \in {\rm Irr}(D|N)-{\rm Irr}(D/K)$, either $\psi(1) >1$ or $I_G(\psi) \neq G$. So, $\psi$ does not satisfy this case.
	
	Recall that ${\rm Irr}^*_{\mathbb{Q}}(G) =\dot{\cup}_{i=1}^t {\rm Irr}^*_{\mathbb{Q}}(\varphi_i^G)$.  Nevertheless, if none of the characters  $\varphi_1,\ldots,\varphi_t$ satisfies the assumptions of Case v, then Cases i-iv and Lemma \ref{later} force ${\rm acd}^*_{\mathbb{Q}}(G) \geq 9/2$. Otherwise, without loss of generality suppose that $\theta_1,\ldots,\theta_m$ are the only elements of $\{\theta_1,\ldots,\theta_n\}$ such that $I_G(\varphi\theta_i)=I_G(\varphi)=G$,  $\varphi\theta_i(1)=1$ and ${\rm Irr}^*_{\mathbb{Q}}((\varphi\theta_i)^G),{\rm Irr}^*_{\mathbb{Q}}(\varphi^G) \neq \emptyset$ for $i \in \{1,\ldots,m\}$ and for some $\varphi \in \{\psi_1,\ldots,\psi_t\}$. Therefore, as stated above  $\varphi\theta_i,\theta_i \in \{\varphi_1,\ldots,\varphi_t\}$ are distinct and ${\rm acd}^*_{\mathbb{Q}}((\varphi\theta_i)^{G}+\theta_i^{G/N}) \geq 9/2$ for every $1 \leq i \leq m$. If $\alpha \not \in \{\varphi\theta_1,\ldots,\varphi\theta_m,\theta_1,\ldots,\theta_m\}$, then we conclude from Lemma \ref{Nec} and Cases i-iv that ${\rm acd}^*_{\mathbb{Q}}(\alpha^G) \geq 9/2$. Therefore, Lemma \ref{later} forces ${\rm acd}^*_{\mathbb{Q}}(G) \geq 9/2$, as wanted.
\end{proof}
\begin{lemma}\label{Q22} Let $M,C\unlhd G$ and $G=MC$  such that  $M \cap C={\bf Z}(M)$ and either  $M \cong 6\cdot {\rm Alt}_6$ or $M \cong 3\cdot {\rm Alt}_6$. If $\varphi \in {\rm Irr}(C|{\bf Z}(M))$ is linear  such that $I_G(\varphi)=G$, ${\bf Z}(M) \cap {\rm ker}\varphi=1$ and ${\rm Irr}^*_{\mathbb{Q}}(\varphi^G) \neq \emptyset$, then $ {\rm acd}_{\mathbb{Q}}^*(\varphi^G) \geq 6$.
\end{lemma}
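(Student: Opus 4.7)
The plan is to show that the combined hypotheses of the lemma cannot all hold, so that the conclusion is vacuously true. The key observation is that ${\rm Irr}^{*}_{\mathbb{Q}}(\varphi^G)\neq\emptyset$ forces $\varphi$ to be rational-valued, whereas the condition ${\bf Z}(M)\cap\ker\varphi=1$ forces $\varphi|_{{\bf Z}(M)}$ to be a faithful character of a cyclic group of order $3$ or $6$; since no such cyclic group admits a faithful rational-valued character, the two conditions are incompatible.

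Concretely, first pick any $\mu\in{\rm Irr}^{*}_{\mathbb{Q}}(\varphi^G)$. Since $I_G(\varphi)=G$, Clifford's theorem (applied exactly as in the proof of Lemma~\ref{Q20}) yields $\mu_C=e\varphi$ for some positive integer $e$. As $\mu$ is rational-valued, so is $\mu_C=e\varphi$, and dividing by $e$ gives $\varphi\in{\rm Irr}_{\mathbb{Q}}(C)$. Setting $\lambda:=\varphi|_{{\bf Z}(M)}$, the character $\lambda$ is then a rational-valued linear character of the cyclic group ${\bf Z}(M)$; in particular $\lambda$ takes values in $\{\pm 1\}$, so $\lambda^{2}=1_{{\bf Z}(M)}$.

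On the other hand, the hypothesis ${\bf Z}(M)\cap\ker\varphi=1$ says precisely that $\ker\lambda=1$, i.e.\ $\lambda$ is faithful, so its order equals $|{\bf Z}(M)|$. But $|{\bf Z}(M)|=3$ for $M\cong 3\cdot{\rm Alt}_6$ and $|{\bf Z}(M)|=6$ for $M\cong 6\cdot{\rm Alt}_6$; in either case the order of $\lambda$ is at least $3$, contradicting $\lambda^{2}=1_{{\bf Z}(M)}$. Hence no such $\mu$ exists, i.e.\ ${\rm Irr}^{*}_{\mathbb{Q}}(\varphi^G)=\emptyset$ whenever the remaining hypotheses hold, so the implication in the lemma is vacuous and the bound $\geq 6$ holds trivially. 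The main (and really only) obstacle is noticing this vacuousness: the condition ${\bf Z}(M)\cap\ker\varphi=1$ excludes even the unique non-trivial rational-valued character of ${\bf Z}(6\cdot{\rm Alt}_6)\cong\mathbb{Z}/6$, which has order $2$ rather than $6$, so no linear $\varphi$ can satisfy all the stated hypotheses at once.
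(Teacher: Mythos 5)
Your proof is correct, but it takes a genuinely different route from the paper's: you show that the hypotheses of the lemma are mutually inconsistent, so the statement holds vacuously, whereas the paper assumes the hypotheses and estimates the degrees in ${\rm Irr}(\varphi^G)$ directly. Your key step — that $I_G(\varphi)=G$ together with the existence of a rational-valued constituent above $\varphi$ forces $\varphi$ itself to be rational-valued — is exactly the argument the paper uses in Lemmas \ref{Q20} and \ref{Q222}, and combined with ${\bf Z}(M)\cap\ker\varphi=1$ it does rule out $|{\bf Z}(M)|\in\{3,6\}$, since a rational linear character takes values in $\{\pm1\}$ and hence has order at most $2$. The paper instead reads off, via the character triple $(G,C,\varphi)$, the constituent degrees $\{6,6,12,12\}$ for $6\cdot{\rm Alt}_6$ and $\{3,3,6,9,15\}$ for $3\cdot{\rm Alt}_6$, and in the latter case uses Remark \ref{rem1} to see that the degree-$15$ constituent is rational, which already forces the average over any admissible rational subset to be at least $6$. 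Both arguments are valid; yours is shorter and additionally exposes that in the place where this lemma is invoked (Case ii of Theorem \ref{Q2}), the sub-cases $M/(N\cap K)\cong 3\cdot{\rm Alt}_6$ or $6\cdot{\rm Alt}_6$ can never actually occur, so only the ${\rm SL}_2(9)$ sub-case handled by Lemma \ref{Q222} is live. What the paper's computation buys is a bound that would survive if the rationality hypothesis were relaxed (say, to real-valued characters), while your observation shows that, exactly as stated, the lemma has empty content.
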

\begin{proof} By the assumption,  $(G,C,\varphi)$ is a character triple. First, let $M \cong 6 \cdot {\rm Alt}_6$. Since ${\bf Z}(M)\cap {\rm ker}\varphi =1$, $\varphi^G=\Sigma_{i=1}^4 e_i\psi_i$, where $e_i$ is a positive integer and $\psi_i \in {\rm Irr}(\varphi^G)$ for every  $1 \leq i \leq 4$. Also, $\psi_1(1)=\psi_2(1)=6$ and $\psi_3(1)=\psi_4(1)=12$. Thus,   $\alpha(1) \geq 6$ for every $\alpha \in {\rm Irr}(\varphi^G)$. Therefore, ${\rm acd}^*_{\mathbb{Q}}(\varphi^G) \geq 6$, as wanted. Next, let $M \cong 3\cdot {\rm Alt}_6$.   Then,  $\varphi^G=\Sigma_{i=1}^5 e_i \psi_i$, where $e_i$ is a positive integer and $\psi_i \in {\rm Irr}(\varphi^G)$ for every $1 \leq i \leq 5$. Also, $\psi_1(1)=\psi_2(1)=3$, $\psi_3(1)=6$, $\psi_4(1)=9$ and $\psi_5(1)=15$. It follows from Remark \ref{rem1} that $\psi_5 \in {\rm Irr}^*_{\mathbb{Q}}(\varphi^G)$. Therefore, ${\rm acd}^*_{\mathbb{Q}}(\varphi^G) \geq 6$, as wanted.
\end{proof}
\begin{lemma} \label{Q222} Let $C,M \unlhd G$ such that $M \cap C={\bf Z}(M)$, $M \cong {\rm SL}_2(9)$ and $G=MC$. If there exists a character $\varphi \in {\rm Irr}(C|{\bf Z}(M))$ with $I_G(\varphi)=G$, $\varphi(1)=1$ and ${\rm Irr}^*_{\mathbb{Q}}(\varphi^G) \neq \emptyset$, then ${\rm acd}^*_{\mathbb{Q}}(G) > 9/2$.
\end{lemma}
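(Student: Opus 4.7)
The plan is to exploit the central product structure of $G=MC$ together with the character table of $M\cong \SL_2(9)\cong 2\cdot \mathrm{Alt}_6$. First I would verify that since $M/(M\cap C)\cong \mathrm{Alt}_6$ is simple, the three-subgroup argument from the end of the proof of Lemma~\ref{cp} forces $[M,C]=1$, so $G$ is a true central product $M\ast C$ with amalgamated central subgroup $Z:=\Center(M)=M\cap C$ of order $2$; in particular $I_G(\alpha)=G$ for every $\alpha\in \Irr(C)$. Let $\mu\in \Irr(Z)$ denote the unique nontrivial character.

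Lemma~\ref{cp0} then provides a bijection $(\alpha,\beta)\mapsto\tau$ between $\Irr(C|\mu)\times \Irr(M|\mu)$ and $\Irr(G|\mu)$ with $\tau(1)=\alpha(1)\beta(1)$, and this bijection preserves rationality (a direct check on the quotient $M\times C\twoheadrightarrow G$). From the character table of $\SL_2(9)$, the set $\Irr(M|\mu)$ consists of six characters of degrees $4,4,8,8,10,10$: the two half-discrete-series characters of degree $4$ are rational (the relevant Gauss sum reduces to $\sqrt{q}=3$, since $q=9$ is a perfect square), while the degree-$8$ and degree-$10$ characters form Galois-conjugate pairs with fields of values $\mathbb{Q}(\sqrt{5})$ and $\mathbb{Q}(\sqrt{2})$. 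Hence $\Irr_{\mathbb{Q}}(M|\mu)=\{\chi^+,\chi^-\}$, both of degree $4$.

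Next I would partition $\Irr^*_{\mathbb{Q}}(G)=\dot{\cup}_j\Irr^*_{\mathbb{Q}}(\alpha_j^G)$ over $\alpha_j\in \Irr_{\mathbb{Q}}(C)$ (non-rational orbits contribute nothing, since $\chi_C=e\alpha$ forces $\alpha$ rational). The given $\varphi$ provides a fixed rational linear extension of $\mu$ to $C$, yielding a bijection $\lambda\leftrightarrow\varphi\lambda$ between the rational linear characters of $C/Z$ and of $\Irr(C|\mu)$. Each rational linear $\varphi'\in \Irr_{\mathbb{Q}}(C|\mu)$ contributes two rational non-linear characters of $G$ of degree $4$ (summing to $8$), while each rational linear $\lambda\in \Irr_{\mathbb{Q}}(C/Z)$ contributes (through the direct product $G/Z\cong \mathrm{Alt}_6\times C/Z$ and the rational non-linear degrees $\{9,10\}$ of $\mathrm{Alt}_6$) two rational non-linear characters of $G$ of degrees $9$ and $10$ (summing to $19$). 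Since the two families are in bijection, together they produce an average of $(8+19)/(2+2)=27/4$. Non-linear rational $\alpha_j$ contribute pieces of average at least $8$ (if $\alpha_j$ is above $Z$) or at least $20\alpha_j(1)/3\geq 40/3$ (otherwise), both larger than $9/2$. Lemma~\ref{later} then yields $\acd^*_{\mathbb{Q}}(G)\geq 27/4>9/2$.

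The main obstacle I anticipate is the rationality bookkeeping: verifying from the character table that the two degree-$4$ faithful characters of $\SL_2(9)$ are genuinely rational (in contrast to the irrational degree-$8$ and degree-$10$ pairs), and confirming that the bijection $\lambda\leftrightarrow\varphi\lambda$ respects Galois action---which crucially uses that $\varphi$ itself is rational, an observation forced by the hypothesis $\Irr^*_{\mathbb{Q}}(\varphi^G)\neq\emptyset$.
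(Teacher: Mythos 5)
Your proposal is in substance the paper's own argument: both proofs reduce to the central product $G=M\ast C$ with $Z={\bf Z}(M)$ of order $2$, note that $I_G(\alpha)=G$ for all $\alpha\in\Irr(C)$ and that a rational constituent of $\alpha^G$ forces $\alpha$ rational, pair the fibre over $\varphi\theta\in\Irr(C|Z)$ with the fibre over $\theta\in\Irr(C/Z)$ via multiplication by the rational linear $\varphi$, bound each block's average from the character tables of ${\rm SL}_2(9)$ and ${\rm Alt}_6$, and finish with Lemma \ref{later}. One bookkeeping error: the rational non-linear irreducible characters of ${\rm Alt}_6$ have degrees $5,5,9,10$, not just $9,10$ (only the two degree-$8$ characters are irrational, with field of values $\mathbb{Q}(\sqrt{5})$). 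Thus a paired block for a rational linear $\lambda$ actually consists of six characters of degrees $4,4,5,5,9,10$, giving average $37/6$ rather than your $27/4$, and a non-linear rational $\alpha_j\in\Irr(C/Z)$ yields average exactly $6\alpha_j(1)\geq 12$ rather than $20\alpha_j(1)/3$; since $37/6$ and $12$ still exceed $9/2$, the conclusion survives, but the stated averages should be corrected. A small difference in economy: the paper never needs to decide the rationality of the faithful degree-$8$ and degree-$10$ characters of ${\rm SL}_2(9)$ — it only uses that at most two members of the fibre over $\varphi\theta$ have degree $4$ and that all remaining characters in the block have large degree — whereas your computation of the exact fields $\mathbb{Q}(\sqrt{5})$ and $\mathbb{Q}(\sqrt{2})$ is correct but not required for the lower bound.
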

\begin{proof}  Let  $\mu \in {\rm Irr}^*_{\mathbb{Q}}(\varphi^G)$. By our assumption,  $G/{\bf Z}(M)=C/{\bf Z}(M) \times M/{\bf Z}(M)$. Regarding  $I_G(\varphi)=G$, $\mu_C=e\varphi$  for some positive integer $e$. Therefore, $\varphi \in {\rm Irr}_{\mathbb{Q}}(C)$. Note that $C\cap M={\bf Z}(M) $ is  a cyclic group of order $2$ and $\varphi \in {\rm Irr}(C|{\bf Z}(M))$.  Thus,   $\lambda=\varphi_{{\bf Z}(M)}$ is the only non-principal character of ${\bf Z}(M)$.  It follows from \cite[Corollary 6.17]{isaacs} that ${\rm Irr}(C|{\bf Z}(M))={\rm Irr}(\lambda^{C})=\{\varphi \theta : \theta \in {\rm Irr}(C/{\bf Z}(M))\}$. Let $\theta_1, \ldots,\theta_n$ be representatives of the action of $G/{\bf Z}(M)$ on ${\rm Irr}(C/{\bf Z}(M))$. Then, $\theta_1, \ldots,\theta_n,\varphi\theta_1, \ldots,\varphi\theta_n$ are representatives of the action of $G$ on ${\rm Irr}(C)$. By Atlas \cite{atlas},   ${\rm Irr}(M/ {\bf Z}(M))=\{ \lambda_1,\ldots,\lambda_{7}\}$ such that $\lambda_1=1_{M/{\bf Z}(M)}$, $\lambda_2,\lambda_3,\lambda_6,\lambda_7 \in {\rm Irr}^*_{\mathbb{Q}}(M/{\bf Z}(M))$,  $\lambda_2(1)=\lambda_3(1)=5$,  $\lambda_4(1)=\lambda_5(1)=8$, $\lambda_6(1)=9$ and $\lambda_7(1)=10$. Also, ${\rm Irr}(M|{\bf Z}(M))=\{ \psi_1,\ldots,\psi_{6}\}$ such that $\psi_1,\psi_2 \in {\rm Irr}^*_{\mathbb{Q}}(M|{\bf Z}(M))$,    $\psi_1(1)=\psi_2(1)=4$,  $\psi_3(1)=\psi_4(1)=8$ and $\psi_5(1)=\psi_6(1)=10$. Let $\theta \in \{\theta_1,\ldots,\theta_n\}$. Then,  $I_{G}(\varphi\theta)=G$ because $I_G(\varphi)=G$, $\theta\in {\rm Irr}(C/{\bf Z}(M))$ and $G/{\bf Z}(M)=C/{\bf Z}(M) \times M/{\bf Z}(M)$.  So, if $\tau \in {\rm Irr}^*_{\mathbb{Q}}((\varphi\theta)^{G})  $, then  $\tau_{C}=e_0\varphi\theta $ for some positive integer $e_0$. This forces $\varphi\theta \in {\rm Irr}_{\mathbb{Q}}(C)$. As $\varphi \in {\rm Irr}_{\mathbb{Q}}(C)$ is linear, we conclude  $\theta \in {\rm Irr}_{\mathbb{Q}}(C/{\bf Z}(M))$. Therefore, $\theta\lambda_6,\theta\lambda_7 \in {\rm Irr}^*_{\mathbb{Q}}(\theta^{G/{\bf Z}(M)})$ because $G/{\bf Z}(M) =C/{\bf Z}(M) \times M/{\bf Z}(M)$. We see that $(G,C,\varphi\theta)$ is a character triple. Hence, ${\rm Irr}^*_{\mathbb{Q}}((\varphi\theta)^{G})$ has at most two elements of degree $4$. Also, as stated above, $\theta\lambda_6,\theta\lambda_7 \in {\rm Irr}^*_{\mathbb{Q}}(\theta^{G/{\bf Z}(M)})$, and the degrees of the other  elements of ${\rm Irr}^*_{\mathbb{Q}}((\varphi\theta)^{G}) \dot{\cup} {\rm Irr}^*_{\mathbb{Q}}(\theta^{G/{\bf Z}(M)})$ (if exist)  are greater than $5$.  This shows that if ${\rm Irr}_{\mathbb{Q}}^*(\varphi\theta) \neq \emptyset$, then
	$
	{\rm acd}_{\mathbb{Q}}^*((\varphi\theta)^G+\theta^{G/{\bf Z}(M)}) > 9/2
	$ considering Lemma \ref{later}.  Recall that by Lemma \ref{Q20}, if ${\rm Irr}^*_{\mathbb{Q}}( \theta^{G/{\bf Z}(M)}) \neq \emptyset$, then ${\rm acd}^*_{\mathbb{Q}}(\theta^{G/{\bf Z}(M)}) >9/2$. However, ${\rm Irr}^*_{\mathbb{Q}}(G)=\dot{\cup}_{i=1}^n({\rm Irr}^*_{\mathbb{Q}}((\varphi\theta_i)^G) \dot{\cup}{\rm Irr}^*_{\mathbb{Q}}(\theta_i^{G/{\bf Z}(M)}))$. Hence, Lemma \ref{later} shows that ${\rm acd}^*_{\mathbb{Q}}(G) > 9/2$, as desired.
\end{proof}
\begin{theorem} \label{Q2}  Let $M\unlhd G$  be minimal such that $M$ is non-solvable. If $N \unlhd G$, $N \leq M$ and  $M/N \cong {\rm Alt}_6$, then ${\rm acd}_{\mathbb{Q}}^*(G) > 9/2$.
\end{theorem}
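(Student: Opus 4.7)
The plan is to mirror the strategy of Theorem~\ref{Q1}, adapted to the quotient $M/N \cong {\rm Alt}_6$. First I would set $D/N = C_{G/N}(M/N)$ and $H = MD$, so that $G/D$ embeds in ${\rm Out}({\rm Alt}_6) \cong \mathbb{Z}_2 \times \mathbb{Z}_2$ and hence $[G:H] \leq 4$. Pick $G$-orbit representatives $\varphi_1,\ldots,\varphi_t$ for the action on ${\rm Irr}(D)$, so that ${\rm Irr}_{\mathbb{Q}}^*(G)$ partitions as $\dot\cup_i {\rm Irr}_{\mathbb{Q}}^*(\varphi_i^G)$. Lemma~\ref{Nec} reduces the task to the situation $H \leq I_G(\varphi_i)$, since otherwise $[G:I_G(\varphi_i)] \geq 5$ already yields ${\rm acd}_{\mathbb{Q}}^*(\varphi_i^G) \geq 5 > 9/2$; combined with Lemma~\ref{later}, it then suffices to prove ${\rm acd}_{\mathbb{Q}}^*(\varphi^G) > 9/2$ (or the analogous bound on a suitable paired block) whenever $H \leq I_G(\varphi)$ and ${\rm Irr}_{\mathbb{Q}}^*(\varphi^G)\neq\emptyset$.

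Next, split into four cases in parallel with the proof of Theorem~\ref{Q1}. \emph{Case (i), $\varphi(1) > 1$:} the character triple $(H,D,\varphi)$ has $H/D \cong {\rm Alt}_6$; if $\varphi$ extends to $\chi \in {\rm Irr}(H)$, Gallagher's theorem gives ${\rm Irr}(\varphi^H) = \{\chi\lambda : \lambda \in {\rm Irr}({\rm Alt}_6)\}$, and since the unique characters of ${\rm Alt}_6$ of degrees $9$ and $10$ are rational-valued and, by Remark~\ref{note1}, extend to rational-valued characters of $G/D$, Remark~\ref{rem1} forces irreducible constituents of degrees $9\varphi(1)$ and $10\varphi(1)$ into ${\rm Irr}_{\mathbb{Q}}^*(\varphi^G)$, pushing the average well above $9/2$. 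If $\varphi$ does not extend to $H$, the degrees of faithful characters of the Schur covers ${\rm SL}_2(9)$, $3{\cdot}{\rm Alt}_6$, $6{\cdot}{\rm Alt}_6$ combined with Remark~\ref{rem1} again yield enough rational-valued constituents of large degree.

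\emph{Case (ii), $\varphi(1)=1$, $I_G(\varphi)=G$, $\varphi$ extendible to $G$:} since $N \leq M = M' \leq G'$, any linear extension of $\varphi$ is trivial on $N$, so $\varphi \in {\rm Irr}(D/N)$ and $G/N = M/N \times D/N$ with $M/N \cong {\rm Alt}_6$; the strict clause of Lemma~\ref{Q20} (which holds precisely for the $M \cong {\rm Alt}_6$ case) delivers ${\rm acd}_{\mathbb{Q}}^*(\varphi^G) > 9/2$. \emph{Case (iii), same hypotheses but $\varphi$ not extendible to $G$:} after inducting on $|G|$ to reduce to $\ker\varphi \cap N = 1$, Corollary~\ref{cpc} yields a central-product decomposition $G = MC$ with $N = \mathbf{Z}(M) = M \cap C$ and $M$ a perfect cover of ${\rm Alt}_6$, i.e.\ $M \cong {\rm SL}_2(9)$, $3{\cdot}{\rm Alt}_6$, or $6{\cdot}{\rm Alt}_6$; Lemmas~\ref{Q22} and \ref{Q222} then dispose of these three possibilities. \emph{Case (iv), $\varphi(1)=1$ with $H \leq I_G(\varphi) \neq G$:} here $[G:I_G(\varphi)] \in \{2,4\}$, and a Clifford-correspondence bookkeeping patterned on Case v of Theorem~\ref{Q1}—pairing each induced $(\varphi\theta)^G$ with the corresponding $\theta^{G/N}$ via the direct-product structure modulo $N$—produces the desired bound.

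The main obstacle is Case (iii): because the Schur multiplier of ${\rm Alt}_6$ is $\mathbb{Z}_6$, three genuinely distinct perfect central covers must be considered, and the tailor-made Lemmas~\ref{Q22} and \ref{Q222} are exactly what is needed to exhibit enough large-degree rational-valued characters in each. A secondary subtlety is that here we need the strict inequality $> 9/2$ rather than the $\geq 9/2$ obtained in the ${\rm Alt}_5$ case of Theorem~\ref{Q1}; this improvement has to be extracted from the presence of the rational-valued characters of degrees $9$ and $10$ unique to ${\rm Alt}_6$ together with the richer outer-automorphism structure $\mathrm{Out}({\rm Alt}_6) \cong \mathbb{Z}_2 \times \mathbb{Z}_2$, and some care is required in Case (iv) to make sure that the pairings compensate any block whose average could a priori equal $9/2$.
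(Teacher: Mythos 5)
Your overall strategy is the paper's: same subgroups $D$ and $H=MD$, same reduction via Lemma~\ref{Nec} and Lemma~\ref{later} to orbits with $H\leq I_G(\varphi)$, and the same division by $\varphi(1)$, inertia group and extendibility, with Lemmas~\ref{Q20}, \ref{Q22} and \ref{Q222} doing the work for the perfect central covers of ${\rm Alt}_6$. However, your treatment of the sub-cases where $I_G(\varphi)=G$ but $H\neq G$ (i.e.\ where outer automorphisms of ${\rm Alt}_6$ act nontrivially, so $[G:H]\in\{2,4\}$) has genuine gaps. First, in your Case~(ii) the identity $G/N=M/N\times D/N$ and hence Lemma~\ref{Q20} are only available when $H=G$; when $H\neq G$ you need a separate (easy) argument, which the paper supplies. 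Second, and more seriously, in your Case~(iii) the tools you invoke do not apply when $H\neq G$: Corollary~\ref{cpc} requires a constituent of degree $2$ or $3$ with trivial kernel intersection with $N$, which need not exist (the cover ${\rm SL}_2(9)$ has minimal faithful degree $4$), and Lemmas~\ref{Q22} and \ref{Q222} assume $G=MC$ with $M\cap C={\bf Z}(M)$, which forces $G/D\cong M/N$ and so fails precisely when $H\neq G$. The paper isolates this as its Case~v: it shows directly that $M/(N\cap\ker\varphi)$ is a Schur representation group of ${\rm Alt}_6$, and that a rational constituent of degree $\leq 4$ forces $G/D\cong{\rm Sym}_6$ and $\overline M\cong{\rm SL}_2(9)$, whereupon ${\rm Irr}(\varphi^{\overline G})$ has degrees $4,4,4,4,16,20$ and Remark~\ref{rem1} places the degree-$16$ and degree-$20$ constituents in ${\rm Irr}^*_{\mathbb{Q}}$, giving the bound. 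Nothing in your sketch produces this computation.

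Two further remarks. The sub-case ``$H\neq G$, $I_G(\varphi)=G$, $\varphi$ linear and extendible to $H$ but not to $G$'' falls through your case division: your Case~(iii) machinery is vacuous there because $N\leq\ker\varphi$ (so one cannot reduce to $\ker\varphi\cap N=1$), and your Case~(ii) assumed extendibility to $G$; the paper handles it separately (its Case~iii) via Gallagher and Lemma~\ref{Lemma 2.2}(ii) applied to the Steinberg (degree-$9$) character. Conversely, the pairing of $(\varphi\theta)^G$ with $\theta^{G/N}$ that you import into your Case~(iv) is not needed for ${\rm Alt}_6$: since every nontrivial projective representation of ${\rm Alt}_6$ has degree at least $3$, the factor $[G:I_G(\varphi)]\geq 2$ already gives degree at least $6$ for all constituents, which is how the paper disposes of that case. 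So the delicate bookkeeping you anticipate is in the wrong place: the real work sits inside your Case~(iii), not Case~(iv).
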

\begin{proof} Set $D/N=C_{G/N}(M/N)$ and $H=MD$. Then, $\frac{G}{D} \cong \frac{G/N}{D/N}=\frac{G/N}{C_{G/N}(M/N)} \leq {\rm Aut}(M/N) $.  Let $\varphi_1, \ldots, \varphi_t$ be representatives of the action of $G$ on ${\rm Irr}(D)$.  By Lemma \ref{Lemma 2.6}, we know ${\rm Irr}_{\mathbb{Q}}^*(G) \neq \emptyset$. Hence, since  ${\rm Irr}(G)= \dot{\cup}_{i=1}^t {\rm Irr}(\varphi_i^G)$, we get ${\rm Irr}_{\mathbb{Q}}^*(\varphi_i^G) \neq \emptyset$ for some $i \in \{1,\ldots,t\}$. If  either ${\rm Irr}_{\mathbb{Q}}^*(\varphi_i^G)=\emptyset$ or ${\rm acd}_{\mathbb{Q}}^*(\varphi_i^G) \geq 5$ for every  $i \in \{1,\ldots,t\}$, then we get from Lemma \ref{later} that ${\rm acd}_{\mathbb{Q}}^*(G) \geq 5> 9/2$, as desired. Otherwise,  from Lemma \ref{Nec} there is a character  $\varphi \in \{\varphi_1, \ldots, \varphi_t\}$ such that ${\rm Irr}_{\mathbb{Q}}^*(\varphi^G) \neq \emptyset$ and $H \leq I_G(\varphi)$. Note that the assumption on $M$ guarantees that $M=M'$ and $N$ is solvable. 
	Now, we continue  the proof in the following cases:
	\\{\bf Case i.} Assume that $\varphi(1) >1$. Set $T=I_G(\varphi)$. We have  ${\rm Alt}_6 \cong  M/N \cong H/D \leq T/D \leq {\rm Aut}(M/N) $ and  $[{\rm Aut}(M/N):H/D]=4$. First, let $\varphi$ be extendible to $\chi \in {\rm Irr}(T)$.  Then,  \cite[Corollary 6.17]{isaacs} shows that ${\rm Irr}(\varphi^T)=\{\chi \psi: \psi \in {\rm Irr}(T/D)\}$. However, $H/D \leq T/D \leq {\rm Aut}(M/N) $. So, $T \unlhd G$ and by Atlas \cite{atlas}, ${\rm Irr}(T/D)=\{\psi_1,\ldots,\psi_t\}$ for some  $t \leq 7$ such that $\psi_1(1)=1$, $\psi_2(1)=9$, $\psi_i(1)\geq 5$  for every  $2 \leq i \leq t$ and $\psi_2$ is extendible to $\rho_2 \in {\rm Irr}^*_{\mathbb{Q}}({G/D})$. Thus, ${\rm Irr}^*_{\mathbb{Q}}(\varphi^G)=\cup_{i=1}^t{\rm Irr}^*_{\mathbb{Q}}((\chi\psi_i)^G)$. We have
	\begin{eqnarray}\label{1033}
	\alpha(1) \geq 10~{\rm for~ every~}\alpha\in \cup_{i=2}^t{\rm Irr}^*_{\mathbb{Q}}((\chi\psi_i)^G).
	\end{eqnarray} Therefore, if ${\rm Irr}^*_{\mathbb{Q}}((\chi\psi_1)^G)=\emptyset$, then ${\rm acd}_{\mathbb{Q}}^*(\varphi^G) \geq 10$. Now, let ${\rm Irr}^*_{\mathbb{Q}}((\chi\psi_1)^G)\neq \emptyset$. If $I_G(\chi)=G$, then  $T=I_G(\varphi)=I_G(\chi)=G$. This implies that $\chi \psi_1 \in {\rm Irr}^*_{\mathbb{Q}}(\varphi^G)$ and hence, $(\chi \psi_1)\psi_2=\chi\psi_2 \in {\rm Irr}^*_{\mathbb{Q}}(\varphi^G)$. Hence,  ${\rm acd}^*_{\mathbb
		{Q}}((\chi\psi_1)^G+(\chi\psi_2)^G)={\rm acd}^*_{\mathbb
		{Q}}(\chi\psi_1+\chi\psi_2)= (\varphi(1)+9\varphi(1))/2> 10$. If $I_G(\chi)\neq G$, then $ I_G(\varphi)=I_G(\chi\psi_1)=I_G(\chi\psi_2)=I_G(\chi)$ regarding Lemma \ref{1109}.  Also, $(\chi\psi_1)^G, (\chi\psi_2)^G \in {\rm Irr}^*_{\mathbb{Q}}(\varphi^G)$. This signifies that ${\rm acd}^*_{\mathbb
		{Q}}((\chi\psi_1)^G+(\chi\psi_2)^G)>10 > 9/2$.  As ${\rm Irr}^*_{\mathbb{Q}}(\varphi^G)=\dot\cup_{i \in \{1,2\}}{\rm Irr}^*_{\mathbb{Q}}((\chi\psi_i)^G) \cup (\cup_{i=3}^t {\rm Irr}^*_{\mathbb{Q}}((\chi\psi_i)^G))$, Lemma \ref{later} and \eqref{1033} lead to ${\rm acd}_{\mathbb{Q}}^*(\varphi^{G}) > 9/2$.  Next, assume that $\varphi$ is not extendible to $I_G(\varphi)$. Then,  $(T,D,\varphi)$ is a character triple and $H/D \leq T/D \leq {\rm Aut}(M/N) $. Recall that   all non-linear  irreducible characters of the central extensions of subgroups of  ${\rm Aut}(M/N)$  containing $M/N$ are of degrees at least $3$  and by Clifford correspondence, ${\rm Irr}(\varphi^G)=\{\alpha^G:\alpha\in {\rm Irr}(\varphi^T)\}$. Thus, since $\varphi(1) \geq 2$, we get  ${\rm acd}_{\mathbb{Q}}^*(\varphi^G) \geq 6$. We conclude that  ${\rm acd}_{\mathbb{Q}}^*(\varphi^{G}) > 9/2$.
	\\{\bf Case ii.}
	Assume $\varphi(1)=1$ and $H=G$. By our former assumption,  $I_G(\varphi)=H=G$.   If $\varphi$ is extendible to $\mu \in {\rm Irr}(G)$, then $G' \leq {\rm ker}\mu$. Hence, $N \leq M=M'\leq G' \leq {\rm ker}\mu$. This yields that  $N \leq {\rm ker}\mu \cap D={\rm ker}\varphi$. Recall that $G/N=D/N \times M/N$, $\varphi \in  {\rm Irr}(D/N)$, and ${\rm Irr}^*_{\mathbb{Q}}(\varphi^{G/N})={\rm Irr}^*_{\mathbb{Q}}(\varphi^G) \neq \emptyset$. So, Lemma \ref{Q20} forces ${\rm acd}^*_{\mathbb{Q}}(\varphi^{G})={\rm acd}^*_{\mathbb{Q}}(\varphi^{G/N}) >9/2$.
	
	Next, suppose that $\varphi$ is not extendible to $G$. Then, $\varphi \in {\rm Irr}(D|N)$ and  $(G,D,\varphi)$ is a character triple. Recall that   $G/D \cong M/N \cong {\rm Alt}_6$.   Set $K={\rm ker}\varphi$ and let $\varphi_0=\varphi_N$. Since $\varphi \in {\rm Irr}(D|N)$ and $\varphi(1)=1$, we can check at once that $\varphi_0 \in {\rm Irr}(N)-\{1_N\}$, $I_G(\varphi_0)=I_G(\varphi)=G$, $\varphi_0(1)=\varphi(1)=1$, and ${\rm ker}\varphi_0=N \cap K$. Therefore, $N\neq N\cap K \trianglelefteq G $ and $ N/(N \cap K) \leq {\bf Z}(G/(N \cap K))$. Since  $N/(N \cap K) \leq {\bf Z}(M/(N \cap K))$, $(M/(N \cap K))'=M'/(N \cap K)=M/(N \cap K)$ and $\frac{M/(N \cap K)}{N/(N \cap K)} \cong M/N \cong {\rm Alt}_6$, we get that $M/(N \cap K)$ is a Schur representation group for ${\rm Alt}_6$. Clearly, $\varphi \in {\rm Irr}(D/(N \cap K)|N/(N \cap K))$. If $M/(N \cap K) \cong 3\cdot {\rm Alt}_6$ or $6\cdot{\rm Alt}_6$, then   Lemma \ref{Q22} implies that ${\rm acd}^*_{\mathbb{Q}}(\varphi^{G})= {\rm acd}^*_{\mathbb{Q}}(\varphi^{G/(N \cap K)}) \geq 6$. Set $M/(N \cap K) \cong 2\cdot {\rm Alt}_6$. In light of Lemma \ref{Q222}, ${\rm acd}^*_{\mathbb{Q}}(G/(N \cap K))>9/2$. Now, let
	$ \psi \in \{\varphi_1,\ldots,\varphi_t\}-{\rm Irr}(D/(N \cap K))$.  If $\psi(1) \geq 2$ or $I_G(\psi)\neq G$, then  ${\rm acd}^*_{\mathbb{Q}}(\psi^G) > 9/2 $ considering Case i and  Lemma \ref{Nec}. Otherwise,  $\psi(1)=1$ and  $I_G(\psi)=G$.  As $\psi \not\in {\rm Irr}(D/(N \cap K))$, we see that $N \cap K \not \leq {\rm ker}\psi$. Thus, $\psi_0=\psi_{N \cap K} \in {\rm Irr}(N \cap K)-\{1_{N \cap K}\}$. Setting  $L={\rm ker}\psi$ and $L_0={\rm ker}\psi_0$, we have  $L_0=L\cap (N \cap K) \lneqq N \cap K$.  Since  $\psi_0(1)=\psi(1)=1$ and  $I_G(\psi_0)=I_{G}(\psi)=G$, we get that $L_0,L \trianglelefteq G$ and   $(N\cap K)/L_0 \leq {\bf Z}(G/L_0)$.   Regarding  $(N\cap K)/L_0 \leq {\bf Z}(M/L_0)$ and $(M/L_0)'=M/L_0$, we obtain  that  $M/L_0$ is a Schur representation group for $\frac{M/L_0}{(N \cap K)/L_0}\cong M/(N \cap K) \cong {\rm SL}_2(9)$. This forces $M/L_0 \cong 6 \cdot {\rm Alt}_6$. Rewriting  the same argument given for $\varphi$ shows that $M/(N \cap L)$ is a Schur representation group for $M/N \cong {\rm Alt}_6$. Recall that $N/L_0$ is the Schur Multiplier of ${\rm Alt}_6$  and as $\psi \in {\rm Irr}(D|N \cap K)$, we observe   that $N \cap L \neq N \cap K$. Taking into account that  $N/L_0$ is cyclic, $[N:N\cap L] \neq [N:N \cap K]=2$. Nevertheless, $M/(N \cap L) \cong 6 \cdot {\rm Alt}_6$ or $ 3 \cdot {\rm Alt}_6$. So,  Lemma \ref{Q22} forces ${\rm acd}^*_{\mathbb{Q}}(\psi^{G/(N \cap L)}) \geq 6$. It follows from  Lemma \ref{later} that   ${\rm acd}_{\mathbb{Q}}^*(G) > 9/2$.
	\\{\bf Case iii.}  Assume $H \neq G$, $\varphi(1)=1$, $I_G(\varphi)=G$ and   $\varphi$ is  extendible to $H$. Since $I_G(\varphi)=G$, we get that $\varphi \in {\rm Irr}_{\mathbb{Q}}(D)$ as stated in the proof of Lemma \ref{Q222}.  Also, $\varphi$ is extendible to $H$ and $\varphi(1)=1$. Hence,  $N \leq {\rm ker}\varphi $. Recall that $H/N = D/N \times M/N$ and $M/N \cong {\rm Alt}_6$. So,  ${\rm Irr}(\varphi^{H})={\rm Irr}(\varphi^{H/N})=\{\varphi \lambda_1,\ldots,\varphi \lambda_7\}$, where $\lambda_1,\ldots,\lambda_7 \in {\rm Irr}(M/N)$, $\lambda_1(1)=1$, $\lambda_2(1)=\lambda_3(1)=5$, $\lambda_4(1)=\lambda_5(1)=8$, $\lambda_6(1)=9$ and $\lambda_7(1)=10$ considering the character table of ${\rm Alt}_6$. So, if ${\rm Irr}^*_{\mathbb{Q}}((\varphi\lambda_1)^G)= \emptyset$, then ${\rm Irr}^*_{\mathbb{Q}}(\varphi^{G})={\rm Irr}^*_{\mathbb{Q}}(\varphi^{G/N})={\cup}_{i=2}^7 {\rm Irr}^*_{\mathbb{Q}}((\varphi\lambda_i)^{G/N})$. As $\lambda_i(1) \geq 5$ for every $2 \leq i \leq 7$, ${\rm acd}^*_{\mathbb{Q}}(\varphi^{G}) \geq 5 > 9/2$. Next, let ${\rm Irr}^*_{\mathbb{Q}}((\varphi\lambda_1)^G)\neq \emptyset$. In view of Remark \ref{note1}, $\lambda_6$ is extendible to $ \chi \in {\rm Irr}^*_{\mathbb{Q}}(G/N)$. Recall that $(\varphi\lambda_1)\lambda_6=\varphi\lambda_6 \in {\rm Irr}(H/N)$ and $I_{G/N}(\varphi\lambda_1)=G/N$. It follows from  Lemma \ref{Lemma 2.2}(ii) that
	${\rm acd}^*_{\mathbb{Q}}((\varphi\lambda_1)^{G/N}+(\varphi\lambda_6)^{G/N}) > 10/2$. On the other hand, if $2 \leq i \leq 7$, then $\varphi\lambda_i(1) \geq 5$. Since ${\rm Irr}^*_{\mathbb{Q}}(\varphi^G)={\rm Irr}^*_{\mathbb{Q}}(\varphi^{G/N})=\dot{\cup}_{i \in \{1,6\}}{\rm Irr}^*_{\mathbb{Q}}((\varphi\lambda_i)^{G/N})\dot{\cup} (\cup_{2 \leq i\leq 7,~i \neq 6}{\rm Irr}^*_{\mathbb{Q}}((\varphi\lambda_i)^{G/N}))$,  Lemma \ref{later} forces ${\rm acd}^*_{\mathbb{Q}}(\varphi^G) > 9/2$.
\\{\bf Case iv.} Assume $\varphi(1)=1$ and $I_G(\varphi) \neq G$. Set $T=I_G(\varphi)$. So, $[G:T] \geq 2$.   By our former assumption, $H \leq I_G(\varphi)$. It follows from Clifford correspondence  that
	\begin{eqnarray} \label{Q2iii}
	{\rm Irr}(\varphi^G)=\{\alpha^G:\alpha \in {\rm Irr}(\varphi^T)\}.
	\end{eqnarray} If $\varphi$ is extendible to $H$, then since $\varphi(1)=1$ and $N \leq M=M' \leq H'$, we deduce $N \leq {\rm ker} \varphi$. Thus, $\varphi \in {\rm Irr}(D/N)$. Recall that $H/N=M/N \times D/N$, $M/N \cong {\rm Alt}_6$ and ${\rm Irr}({\rm Alt}_6)=\{\lambda_1,\ldots,\lambda_7\}$ such that $\lambda_1=1_{{\rm Alt}_6}$, $\lambda_2(1)=\lambda_3(1)=5$, $\lambda_4(1)=\lambda_5(1)=8$, $\lambda_6(1)=9$ and $\lambda_7(1)=10$.  If ${\rm Irr}^*_{\mathbb{Q}}((\lambda_1\varphi)^{G/N}) =\emptyset$, then  ${\rm Irr}^*_{\mathbb{Q}}(\varphi^{G/N})={\cup}_{i=2}^7{\rm Irr}^*_{\mathbb{Q}}((\lambda_i\varphi)^{G/N})$. Hence, since  $\chi(1) \geq (\lambda_i\varphi)(1)\geq  5$ for every $\chi \in \cup_{i=2}^7{\rm Irr}((\lambda_i\varphi)^{G/N})$, we get ${\rm acd}^*_{\mathbb{Q}}(\varphi^{G})={\rm acd}^*_{\mathbb{Q}}(\varphi^{G/N}) \geq 5>9/2$, as desired.  Now, assume that ${\rm Irr}^*_{\mathbb{Q}}((\lambda_1\varphi)^{G/N}) \neq \emptyset$. Observing  character table of ${\rm Alt}_6$ and Remark \ref{note1}, $\lambda_6$ is extendible to $\rho_6 \in {\rm Irr}_{\mathbb{Q}}(G/N)$. As $H \leq T \leq G$ and $G/D \leq {\rm Aut}(M/N)$, we have  $T\trianglelefteq G$. Taking into account the former fact, we have  ${\rm Irr}_{\mathbb{Q}}((\lambda_1\varphi)^{G/N}) ={\rm Irr}^*_{\mathbb{Q}}((\lambda_1\varphi)^{G/N}) \neq \emptyset $.  Further, $\lambda_6(\lambda_1\varphi)=\lambda_6\varphi \in {\rm Irr}(H)$ and $I_G(\lambda_6\varphi)=T$. It follows from Lemmas \ref{Lemma 2.2}(ii) and \ref{2.2} that    ${\rm acd}^*_{\mathbb{Q}}((\lambda_6\varphi)^{G/N}+(\lambda_1\varphi)^{G/N})={\rm acd}_{\mathbb{Q}}((\lambda_6\varphi)^{G/N}+(\lambda_1\varphi)^{G/N})
	> 9/2$. Recall that ${\rm Irr}^*_{\mathbb{Q}}(\varphi^{G/N})={\rm Irr}^*_{\mathbb{Q}}((\lambda_1\varphi)^{G/N})\dot{\cup} {\rm Irr}^*_{\mathbb{Q}}((\lambda_6\varphi)^{G/N}) \dot{\cup}({\cup}_{i\in\{2,3,4,5,7\}}{\rm Irr}^*_{\mathbb{Q}}((\lambda_i\varphi)^{G/N}))$ and $\chi(1) \geq 5$ for every $\chi \in \cup_{i=2}^7{\rm Irr}((\lambda_i\varphi)^{G/N})$. Thus, Lemma \ref{later} shows that ${\rm acd}^*_{\mathbb{Q}}(\varphi^G)={\rm acd}^*_{\mathbb{Q}}(\varphi^{G/N}) > 9/2$.
	
	Next, suppose that $\varphi$ is not extendible to $H$. Then, since  $(H,D,\varphi)$ is a character triple,   $\varphi^{H}=\Sigma_{i=1}^ne_i \psi_i$, where $n$ and $e_i$ are positive integers,  $\psi_i \in {\rm Irr}(\varphi^H)$  and $\psi_i(1)\geq 3$ for every $i \in \{1,\ldots,n\}$. By \eqref{Q2iii},  ${\rm Irr}(\varphi^G)=\{\alpha^G: \alpha \in \cup_{i=1}^n{\rm Irr}(\psi_i^T)\}$. Since $\alpha^G(1)=[G:T] \alpha(1) \geq 2\psi_i(1)\geq 6$ for some $1 \leq i \leq n$ and every $\alpha \in {\rm Irr}(\psi_i^T)$, we get that   ${\rm acd}_{\mathbb{Q}}^*(\varphi^G) \geq 6 > 9/2$. Nevertheless, in this case, ${\rm acd}_{\mathbb{Q}}^*(\varphi^G)> 9/2$.
	\\{\bf Case v.}
	Assume  $H \neq G$, $I_G(\varphi)=G  $, $\varphi(1) =1 $ and $\varphi $ is not extendible to $H$.
	Observe that  $(G,D,\varphi)$ is a character triple and $G/D \leq  {\rm Aut}({\rm Alt}_6)$. Set  $K={\rm ker}\varphi$ and  $\overline{G}=G/(N \cap K)$.  As stated in Case (ii), we can see that $\overline{M}$ is a Schur representation group for $M/N \cong {\rm Alt}_6$ and $\overline{N} \leq {\bf Z}(\overline{M})$. If $\alpha(1)\geq 5$ for every $\alpha \in {\rm Irr}^*_{\mathbb{Q}}(\varphi^{\overline{G}})$, then ${\rm acd}^*_{\mathbb{Q}}(\varphi^{\overline{G}}) \geq 5 > 9/2$. Next, let $\alpha \in {\rm Irr}^*_{\mathbb{Q}}(\varphi^{\overline{G}})$ with $\alpha(1)\leq 4$. Then, $\alpha \in {\rm Irr}^*_{\mathbb{Q}}(\overline{G}|\overline{N})$ and  considering the  degrees of elements of  ${\rm Irr}(\overline{M}|\overline{N})$ forces $\alpha_{\overline{M}} \in {\rm Irr}(\overline{M})$. Hence, $\alpha_{\overline{M}} \in {\rm Irr}_{\mathbb{Q}}(\overline{M}|\overline{N})$. Therefore, $G/D  \cong {\rm Sym}_6$ and $\overline{M}\cong  {\rm SL}_2(9)$, applying Atlas \cite{atlas}. So,   ${\rm Irr}(\varphi^{G})=\{ \psi_1,\ldots,\psi_6\}$, where  $\psi_1(1)=\psi_2(1)=\psi_3(1)=\psi_4(1)=4$, $\psi_5(1)=16$ and $\psi_6(1)=20$. As ${\rm Irr}^*_{\mathbb{Q}}(\varphi^G) \neq \emptyset $,  we get from Remark \ref{rem1} that $\psi_5,\psi_6 \in {\rm Irr}^*_{\mathbb{Q}}(\varphi^G)$. Therefore, we can check immediately that  ${\rm acd}_{\mathbb{Q}}^*(\varphi^G) > 9/2$.
	
	Recall that ${\rm Irr}^*_{\mathbb{Q}}(G)=\dot{\cup}_{i=1}^t {\rm Irr}^*_{\mathbb{Q}}(\varphi_i^G)$. Thus, if $H \neq G$, then  it follows from Cases i, iii-v and Lemmas \ref{later}, \ref{Nec} that ${\rm acd}_{\mathbb{Q}}^*(G) >9/2$. Also, if $G=H$, then Case ii  completes the proof.
\end{proof}
\begin{lemma}\label{prod} Let $M\unlhd  G$ be minimal such that $M$ is non-solvable and let   $M/{\bf Z}(M)$ be a chief factor of $G$. If $1 \neq {\bf Z}(M) \leq {\bf Z}(G)$, $\lambda \in {\rm Irr}({\bf Z}(M))-\{1_{{\bf Z}(M)}\}$  and there exists a  character $\chi \in {\rm Irr}^*_{\mathbb{Q}}(\lambda^G)$ with  $\chi(1)\leq 4$, then $M/{\bf Z}(M)$ is simple.     \end{lemma}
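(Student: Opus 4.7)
The plan is to argue by contradiction. Suppose $M/{\bf Z}(M)$ is not simple. Since it is a non-abelian chief factor of $G$, we may write $M/{\bf Z}(M) = S_1 \times \cdots \times S_t$ with $t \geq 2$, the $S_i$ isomorphic non-abelian simple, permuted transitively by $G$. Setting $Z := {\bf Z}(M)$, let $\tilde M_i$ denote the preimage of $S_i$ in $M$ and $N_i = [\tilde M_i, \tilde M_i]$. Routine arguments show each $N_i$ is quasisimple with $Z(N_i) \leq Z$ and $N_i/Z(N_i) \cong S_i$. Since the images of $\tilde M_i$ and $\tilde M_j$ commute in $M/Z$, we have $[N_i, N_j] \leq Z$, and the induced bimultiplicative map $N_i^{\rm ab} \times N_j^{\rm ab} \to Z$ is zero because $N_i, N_j$ are perfect; hence $N_i$ and $N_j$ commute elementwise. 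Because $Z \leq {\bf Z}(G)$ and $G$ acts transitively on $\{N_1, \ldots, N_t\}$, the subgroups $Z(N_i)$ are pairwise equal as subgroups of $M$; combined with $M = N_1 \cdots N_t$, this forces $Z(N_i) = Z$ for every $i$, and $M = N_1 * \cdots * N_t$ is a central product amalgamated over $Z$.

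Next I would exploit this structure at the level of characters. Every irreducible character of $M$ above $\lambda$ factors as a central-product tensor $\psi = \psi_1 \otimes \cdots \otimes \psi_t$ with $\psi_i \in \Irr(N_i)$, each $\psi_i$ lying above $\lambda$ on $Z(N_i) = Z$, and $\psi(1) = \prod \psi_i(1)$. Choose $\psi$ an irreducible constituent of $\chi_M$. Since $\lambda \neq 1_Z$ no $\psi_i$ is trivial on $Z$, and since $N_i$ is perfect no $\psi_i$ is linear, so $\psi_i(1) \geq 2$. Therefore $2^t \leq \psi(1) \leq \chi(1) \leq 4$, which forces $t = 2$ and $\psi_i(1) = 2$ for $i=1,2$. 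Equality $\chi(1) = \psi(1) = 4$ then yields $\chi_M = \psi$, so in particular $\psi$ is rational-valued on $M$.

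The final step is to rule out this remaining configuration. Each $\psi_i$ is a non-linear irreducible of the quasisimple $N_i$, so $\ker\psi_i \leq Z(N_i) = Z$, and since $(\psi_i)_Z = 2\lambda$ we get $\ker\psi_i = \ker\lambda$; hence $\psi_i$ descends to a faithful degree-$2$ irreducible of the quasisimple group $N_i/\ker\lambda$. By Lemma \ref{quasi} together with inspection of the possible covers of the groups ${\rm Alt}_5, {\rm Alt}_6, {\rm Alt}_7, \PSL_2(7), \PSU_4(2)$, only $\SL_2(5)$ carries a faithful irreducible of degree $2$, so $N_i/\ker\lambda \cong \SL_2(5)$. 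But the two faithful degree-$2$ characters of $\SL_2(5)$ lie in $\mathbb{Q}(\sqrt 5) \setminus \mathbb{Q}$, so neither is rational; since rationality of $\psi$ would force each factor $\psi_i$ to be Galois-fixed by the central-product character theory, this is a contradiction, and we conclude $t = 1$. The main technical obstacle I foresee is rigorously pinning down the central product structure, in particular establishing $Z(N_i) = Z$ as an equality of subgroups of $M$, for which the hypothesis $Z \leq {\bf Z}(G)$ is crucial.
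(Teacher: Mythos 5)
Your proof is correct, and it takes a genuinely different (and in some respects cleaner) route than the paper's. The paper first reduces to $\ker\lambda=1$ by induction, shows only that the preimages $S_i$ of the simple factors are perfect, and then argues via Clifford theory from $\psi_i(1)\mid\psi(1)\mid\chi(1)\le 4$; this leaves two cases, $\psi_1(1)=2$ (handled by Corollary \ref{cpc}, a partial central product, and the irrationality of the degree-$2$ characters of $\SL_2(5)$) and $\psi_1(1)\ge 3$ (handled by analyzing $\ker\psi\trianglelefteq G$ against the chief factor and invoking Feit's theorem on faithful representations of degree less than $r-1$). You instead invest in establishing the full central product $M=N_1*\cdots*N_t$ amalgamated over $Z={\bf Z}(M)$ up front — the commutator-bilinearity argument for $[N_i,N_j]=1$ and the use of $Z\le{\bf Z}(G)$ to pin down $Z(N_i)=Z$ are both sound (for the latter you implicitly use $M=M'$ to get $M=N_1\cdots N_t$, which is fine since minimality of $M$ forces $M$ perfect). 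This buys you the exact product formula $\psi(1)=\prod_i\psi_i(1)$, which collapses everything to $t=2$ with both $\psi_i$ of degree $2$, so the paper's second case and the appeal to Feit's theorem disappear entirely, and no induction on $\ker\lambda$ is needed. Your endgame matches the paper's: the Galois-equivariance of the central-product bijection $\Irr(N_1|\lambda)\times\Irr(N_2|\lambda)\to\Irr(M|\lambda)$ (note $\lambda$ is rational since $\chi_Z=\chi(1)\lambda$ and $\chi$ is rational) forces each $\psi_i$ to be rational, contradicting the fact that $\SL_2(5)$ has no rational faithful character of degree $2$. The only points to tighten in a written version are the justification of $M=N_1\cdots N_t$ and the explicit check, beyond Lemma \ref{quasi}, that among the listed quasisimple groups only $\SL_2(5)$ admits a faithful degree-$2$ irreducible.
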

\begin{proof} Since $I_G(\lambda)=G$, we observe that ${\rm ker} \lambda \trianglelefteq G$. If ${\rm ker}\lambda \neq 1$, then as ${\rm Irr}(\lambda^G)={\rm Irr}(\lambda^{G/{\rm ker}\lambda})$ and ${\bf Z}(M/{\rm ker}\lambda)={\bf Z}(M)/{\rm ker}\lambda \leq {\bf Z}(G)/{\rm ker}\lambda \leq {\bf Z}(G/{\rm ker}\lambda)$, we get from induction that $$\frac{M/{\rm ker}\lambda}{{\bf Z}(M/{\rm ker}\lambda)}=\frac{M/{\rm ker}\lambda}{{\bf Z}(M)/{\rm ker}\lambda} \cong M/{\bf Z}(M)$$ is simple, as desired. Next, let ${\rm ker}\lambda=1$. As $M/{\bf Z}(M)$ is  non-solvable, $M/{\bf Z}(M) =S_1/{\bf Z}(M) \times \cdots\times S_t/{\bf Z}(M)$ such that
	$S_1/{\bf Z}(M) ,\ldots, $$S_t/{\bf Z}(M)$ are isomorphic to a non-abelian simple group $S$.  Recall that the assumption on $M$ leads to $M=M'$. Working towards a contradiction, suppose that  $t\geq 2$. Then, $G$ acts transitively on $\{S_1,\ldots, S_t\}$. As $S_1/{\bf Z}(M)$ is non-abelian simple, $S_1'{\bf Z}(M)=S_1$.  Also, ${\bf Z}(M) \leq {\bf Z}(G)$ and $S_1 ,\ldots,S_t$ are $G$-conjugate.  Thus, $S_1'\cap {\bf Z}(M)=S_i' \cap {\bf Z}(M)$ for every $1 \leq i \leq t$. Since $S_1'/(S_1' \cap {\bf Z}(M)) \cong S_1/{\bf Z}(M)$ is non-abelian simple, we get that  $M/(S_1'\cap {\bf Z}(M))=S_1'/(S_1'\cap {\bf Z}(M)) \times \cdots \times  S_t'/(S_1'\cap {\bf Z}(M)) \times {\bf Z}(M)/(S_1'\cap {\bf Z}(M))$. Regarding $M=M'$, ${\bf Z}(M)=S_1'\cap {\bf Z}(M)$. This yields that $S_1'=S_1'{\bf Z}(M)=S_1$.
	Thus, $S_i'=S_i$ for every $1 \leq i\leq t$.
	Let  $\psi \in {\rm Irr}(\chi_M)$ and $\psi_i \in {\rm Irr}(\psi_{S_i})$. Regarding $S_i'=S_i$ and $\psi_i\in {\rm Irr}(S_i|{\bf Z}(M))$, we have  $\psi_i(1) \geq 2$.   If $\psi_1(1)=2$, then $S_1 \cong {\rm SL}_2(5)$, by Corollary \ref{cpc}. We know that $S_1 \unlhd M$ and $S_1/{\bf Z}(M)$ is simple, $ C_M(S_1)\cap S_1={\bf Z}(S_1)={\bf Z}(M)$ and $M/C_M(S_1) \leq {\rm Aut}(S_1)={\rm Sym}_5$. This forces $S_2\cdots S_t \leq C_M(S_1)$. Hence, $I_M(\psi_1)=M$ and  $M$ is a central product of $S_1$ and $S_2 \cdots S_t$. Regarding  Lemma \ref{cp0},  $4 \mid \psi(1)$ and as $\psi(1) \leq \chi(1)
	\leq 4$, we conclude  that $\psi(1)=4$.  Consequently, $\chi_M=\psi$. Since $I_M(\psi_1)=M$,   $\psi_{S_1}=e_0\psi_1$ for some positive integer $e_0$. This yields that $\chi_{S_1}=e_0\psi_1$. Note that $\chi \in {\rm Irr}^*_{\mathbb{Q}}(G)$.  This guarantees that $\psi_1\in {\rm Irr}^*_{\mathbb{Q}}(S_1)$. Since $S_1 \cong {\rm SL}_2(5)$ and ${\rm SL}_2(5)$ does not admit any rational-valued irreducible character of degree $2$, we get a contradiction.  If $\psi_1(1)\geq 3$, then since $\psi_1(1)$ divides $\psi(1)$, $\psi(1)$ divides $ \chi(1)$ and $ \chi(1) \leq 4$, we deduce that $\psi_1(1)=\psi(1)=\chi(1)$. Therefore, $\chi_M=\psi$. Setting  $K={\rm ker}\psi$, we have  $K \trianglelefteq G$. So, $K{\bf Z}(M)/{\bf Z}(M) \trianglelefteq G/{\bf Z}(M)$. However, $K{\bf Z}(M) \leq M$ and $M/{\bf Z}(M)$ is a chief factor of $G$. This shows that $K{\bf Z}(M)=M$ or $K{\bf Z}(M)={\bf Z}(M)$. In the former case, $K=K/(K\cap {\bf Z}(M)) \cong M/{\bf Z}(M)$ is non-solvable because $K \cap {\bf Z}(M)={\rm ker}\lambda=1$. This is a contradiction with our assumption on $M$ as $K \lneq M$. In the latter case, $K{\bf Z}(M)={\bf Z}(M)$. This forces $K=K \cap {\bf Z}(M)={\rm ker}\lambda$ because $I_M(\lambda)=M$. This implies that $K=1$. Since $M/N$ is a non-abelian chief factor of $G$,  $|M/N|$ is divisible  by a prime $r \geq 5$. It follows from the main result of \cite{feit1} that a maximal subgroup of  a Sylow $r$-subgroup of  $M$ is normal in $M$. This forces $|M/N|_r=r$. This means that $M/N$ is non-abelian simple, as wanted.
\end{proof}
\begin{proposition}\label{Q3} Let $M \unlhd G$ be minimal such that $M $ is non-solvable. Suppose that $M/N$ is a chief factor of $G$ such that $M/N \not \cong {\rm Alt}_5,{\rm  Alt}_6$. If $\lambda \in {\rm Irr}(N)$ such that ${\rm Irr}^*_{\mathbb{Q}}(\lambda^G) \neq \emptyset$, then
	${\rm acd}_{\mathbb{Q}}^*(\lambda^G) \geq 9/2$.
\end{proposition}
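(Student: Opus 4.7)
My plan is to show that ${\rm acd}_{\mathbb{Q}}^*(\lambda^G) \geq 9/2$ through a case analysis structured according to the relationship between $T := I_G(\lambda)$ and $M$, combined with Clifford theory applied to the chief factor $M/N$.

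First I would dispose of the case $M \not\leq T$. Writing $M/N \cong S^s$ for a non-abelian simple group $S$, the subgroup $(M \cap T)/N$ is a proper $G$-normal subgroup of the chief factor $M/N$, so it equals a proper product of copies of $S$, whence $[M:M \cap T] \geq |S| \geq 60$. Clifford correspondence then forces every $\chi \in {\rm Irr}(\lambda^G)$ to satisfy $\chi(1) \geq [G:T] \geq [M:M \cap T] \geq 60$, and the conclusion is immediate.

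Assume henceforth $M \leq T$, so $\lambda$ is $M$-invariant. Suppose for contradiction that some $\chi \in {\rm Irr}^*_{\mathbb{Q}}(\lambda^G)$ has $\chi(1) \leq 4$. Set $K := \ker\chi \cap N \unlhd G$ and pass to $\overline G := G/K$: then $\overline M$ remains minimal non-solvable in $\overline G$ (by minimality of $M$), $\overline M / \overline N \cong M/N$ remains a chief factor, and $\ker\bar\chi \cap \overline N = 1$ by construction. If $\chi(1) \in \{2,3\}$, Corollary \ref{cpc} applied to $\bar\chi \in {\rm Irr}(\overline G | \overline N)$ forces $M/N \cong {\rm Alt}_5$ or ${\rm Alt}_6$, contradicting the hypothesis of the proposition.

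For the core case $\chi(1) = 4$, when $M/N$ is simple I would adapt the argument of Lemma \ref{prod} inside $\overline G$ to show $\overline N \leq \mathbf{Z}(\overline M)$; the chief-factor property combined with $\ker\bar\chi \cap \overline N = 1$ yields $[\overline M, \overline N] = 1$, so $\overline M$ is a quasi-simple cover of the simple group $M/N$. Let $\psi \in {\rm Irr}(\bar\chi_{\overline M})$; perfectness of $\overline M$ together with $\overline N \not\leq \ker\psi$ forces $\psi(1) \geq 2$, while Clifford gives $\psi(1) \leq 4$. If $\bar\chi_{\overline M} = \psi$ is irreducible, then $\psi$ inherits rationality from $\bar\chi$, so Lemma \ref{quasi} forces $M/N \cong {\rm Alt}_5$ or ${\rm Alt}_6$, a contradiction; otherwise Clifford gives $\bar\chi(1) \geq 2\psi(1)$, hence $\psi(1) = 2$, and \cite{atlas} identifies the unique quasi-simple group with a faithful $2$-dimensional irreducible character as ${\rm SL}_2(5)$, again forcing $M/N \cong {\rm Alt}_5$. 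When $M/N \cong S^s$ is non-simple ($s \geq 2$), I would invoke Lemma \ref{us1} to produce a rational-valued extension $\rho \in {\rm Irr}_{\mathbb{Q}}(G/N) \subseteq {\rm Irr}_{\mathbb{Q}}(G)$ of a character of $M/N$ of degree at least $5^s \geq 25$, and pair each offending character $\chi$ of degree at most $4$ with an irreducible constituent of $\chi \otimes \rho$ lying in ${\rm Irr}^*_{\mathbb{Q}}(\lambda^G)$ with degree at least $\chi(1) \cdot \rho(1) \geq 100$ (irreducible by Gallagher's theorem when $\chi$ extends $\lambda$, otherwise via a Clifford-correspondence variant through $I_G(\lambda)$); Lemma \ref{later} then delivers the bound ${\rm acd}^*_{\mathbb{Q}}(\lambda^G) \geq 9/2$. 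The main obstacle will be the pairing construction in the non-simple case: one must verify via a careful Clifford-orbit and Galois analysis that $\chi \otimes \rho$ has a rational-valued irreducible constituent above $\lambda$ with the claimed large degree, which requires adapting the tensor-pairing arguments of Lemmas \ref{Lemma 2.2} and \ref{2.2} to the present setting.
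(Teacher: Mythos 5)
Your proposal has a genuine gap at its core: for the case where $M/N$ is simple you argue by contradiction from the mere existence of a character $\chi\in{\rm Irr}^*_{\mathbb{Q}}(\lambda^G)$ with $\chi(1)\leq 4$, i.e.\ you are in effect trying to prove that the \emph{minimum} degree in ${\rm Irr}^*_{\mathbb{Q}}(\lambda^G)$ is at least $5$. That is false, and the proposition is only a statement about the \emph{average}. The case $N=1$, $\lambda=1_N$ is within the scope of the proposition (it is exactly the case used in the final theorem), and then ${\rm Irr}^*_{\mathbb{Q}}(\lambda^G)={\rm Irr}^*_{\mathbb{Q}}(G)$ contains every non-linear rational character of $G/M$; for instance $G={\rm PSL}_2(7)\times D_8$ has a rational irreducible character of degree $2$ with $M$ in its kernel. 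Your appeal to Corollary \ref{cpc} breaks down precisely there: that corollary requires $N\neq 1$ and (via Lemma \ref{cp}) that $\chi$ not contain the relevant minimal normal subgroup of $M$ in its kernel, neither of which holds for such $\chi$, so no contradiction is available. The same defect persists whenever $\lambda$ extends to $\lambda_0\in{\rm Irr}(M)$: characters lying over $\lambda_0$ itself (Gallagher-trivial on $M/N$) can be rational of small degree. The paper's proof handles exactly this situation by an averaging device, not a degree bound: it uses Lemma \ref{Lemma 2.6} to produce $\psi_t\in{\rm Irr}(M/N)$ with $\psi_t(1)\geq 8$ extendible to a rational character of $G$, and then Lemma \ref{Lemma 2.2}(ii) to pair ${\rm Irr}^*_{\mathbb{Q}}((\lambda_0)^G)$ with ${\rm Irr}^*_{\mathbb{Q}}((\lambda_0\psi_t)^G)$, obtaining average at least $(\psi_t(1)+1)/2\geq 9/2$ on the union before invoking Lemma \ref{later}. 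You only deploy a pairing argument in the non-simple case, where ironically it is least needed (Corollary \ref{corollary 2.7} already gives $\chi(1)\geq 6$ for characters not trivial on $M/N$).

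Two further problems. First, your opening reduction asserts that $M\cap T$ is normal in $G$ for $T=I_G(\lambda)$; it is not, since $T$ need not be normal. The correct dichotomy (as in the paper) is: either $[G:T]\geq 5$, in which case induction of characters gives the bound outright, or $[G:T]\leq 4$, in which case $G/{\rm Core}_G(T)$ embeds in ${\rm Sym}_4$ and is solvable, forcing $M\leq{\rm Core}_G(T)\leq T$. Second, you never address the delicate induced case $2\leq[G:T]\leq 4$ with $M\leq T$: the paper's Case ii must rule out a rational character of degree $4$ induced from a degree-$2$ character of an index-$2$ inertia subgroup, which it does by pinning down a configuration $M/(S_1\cap K)\cong{\rm SL}_2(5)\times{\rm Alt}_5$ and exhibiting an explicit irrational character value involving $\sqrt{5}$; your sketch of the degree-$4$ analysis (which tacitly assumes $\lambda$ is $G$-invariant when adapting Lemma \ref{prod}, and assumes the faithfulness needed for Lemma \ref{quasi}) does not cover this.
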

\begin{proof}
	By our assumption on $M$, $M=M'$, $M/N$ is non-abelian and $N$ is solvable.
	Set $T=I_G(\lambda)$. Then, one of the following cases occurs:
	\\{\bf Case i.} Suppose that $[G:T] \geq 5$. Since ${\rm Irr}(\lambda^G)=\{\alpha^G:\alpha \in {\rm Irr}(\lambda^T)\}$, we deduce that $\chi(1) \geq 5$ for every  $\chi \in {\rm Irr}_{\mathbb{Q}}^*(\lambda^G)$. Thus, ${\rm acd}_{\mathbb{Q}}^*(\lambda^G) \geq 5$, as wanted.
	\\{\bf Case ii.} Assume that $2 \leq [G:T] \leq 4$. Then, $\lambda\neq 1_N$ and $G/ {\rm Core}_G(T) \leq {\rm Sym}_4$ is solvable, where ${\rm Core}_G(T)=\cap_{g \in G}T^g$.   Since $N \leq T$ and $N \unlhd G$, we have $N \leq {\rm Core}_G(T)$. So, $N \leq M \cap {\rm Core}_G(T) \unlhd G$. However, $M/N $ is a chief factor of $G$  and $M \cap {\rm Core}_G(T) \leq M$. This implies that either $M \cap {\rm Core}_G(T)=N$ or $M \cap {\rm Core}_G(T)=M$. In the former case, $M/N=M/(M \cap {\rm Core}_G(T))\cong M {\rm Core}_G(T)/{\rm Core}_G(T) \leq G/{\rm Core}_G(T)$, which is a contradiction as $G/{\rm Core}_G(T)$ is solvable. So, $M=M \cap {\rm Core}_G(T) \leq T$. Recall that as $T \neq G$, $\lambda\neq 1_N$.
	If $\chi\in {\rm Irr}(\lambda^T)$, then since $\lambda\neq 1_N$, $N \leq M \unlhd T$ and $M$ is perfect, we have $\chi(1) \neq 1$. Hence,  $\chi(1) \geq 2\lambda(1)$ or $\lambda(1) \geq 2$. If $\chi(1) \geq 3$ or $[G:T] \geq 3$, then $\chi^G(1)\geq 6$. Thus, it is enough to consider the case when $[G:T]=\chi(1)=2$ and $\alpha=\chi^G \in {\rm Irr}^*_{\mathbb{Q}}(\lambda^G)$. As $\lambda \neq 1_N$, $M'=M \not \leq {\rm ker}\chi$. We obtain that $\psi(1) \geq 2$ for every $\psi \in {\rm Irr}(\chi_M)$. This implies that $\chi_M \in {\rm Irr}(M)$.  Set $K=\rm {ker} \chi \cap M $. Then, $K \unlhd T$. Regarding $\chi \in {\rm Irr}(\lambda^T)$, we observe that $N \not \leq K$. Hence, $NK/K$ is a non-trivial normal subgroup of $M/K$. It follows from Corollary \ref{cpc} that
	$M/K \cong {\rm SL}_2(5)$  and $|NK/K|=2$. Since $M/N$ is a chief factor of $G$, $M/N=S_1/N \times \cdots \times S_t/N$ such that $S_1/N, \cdots , S_t/N$ are isomorphic non-abelian simple groups.
	As $KN/N \unlhd M/N$ and $M/K \cong {\rm SL}_2(5)$, we can assume that $  K N =S_2 \cdots S_t$ and $S_1/N \cap KN/N=1$. Hence, $S_1 \unlhd T$ and $(S_1 \cap K)N=S_1 \cap KN=N$. This shows that $S_1 \cap K \leq N$ is solvable, which forces $S_1/(S_1 \cap K) \cong S_1K/K$ to be non-solvable.   Taking into account that $S_1K/K \unlhd M/K$ and $M/K \cong {\rm SL}_2(5)$, we get that $S_1K/K=M/K$. Therefore, $M=S_1K$. We observe that  $S_1K/NK=S_1(NK)/NK\cong S_1 /(S_1 \cap NK)=S_1 /(N (S_1 \cap K))=S_1/N $ is non-abelian simple. This implies that $S_1K/NK \cong {S_1/N} \cong  {\rm Alt}_5$. Also, $S_1/(S_1 \cap K) \cong S_1K/K=M/K \cong {\rm SL}_2(5)$ and   $S_1/(N \cap K)\cong \frac{S_1/(S_1\cap K)}{(N\cap K)/(S_1\cap K)}$.  Thus,  $S_1 \cap K=N \cap K \unlhd T$ because $S_1 \cap K \leq N \cap K \neq N$ and $S_1/N \cong {\rm Alt}_5$. On the other hand,  $S_1 \unlhd T$.  Our assumption on $M/N$ guarantees that $t \geq 2$ considering $S_1/N \cong {\rm Alt}_5$. As $G$ acts transitively on $\{S_1 ,\ldots, S_t\}$, $S_1 \unlhd T$ and $[G:T]=2$, we get that $t=[G:T]=2$.
	So, $KN=S_2$. This signifies that  $S_2/(S_1 \cap K)=KN/(N \cap K)=K/(N \cap K) \times N/(N \cap K)$.  Then, $K/(S_1 \cap K) \cong S_2/N \cong S_1/N \cong {\rm Alt}_5$. Thus,
$$M/(S_1 \cap K)= S_1S_2/(S_1 \cap K)= S_1/(S_1 \cap K) \times K/(S_1 \cap K) \cong {\rm SL}_2(5) \times {\rm Alt}_5.$$ Further, $\chi_M \in {\rm Irr}(M/(S_1\cap K))$ and  $\alpha_T = \chi + \chi^g$ for some
	$g \in G - T $. As $\chi(1)=2$, we have  $\chi_M= \psi_1 1_{{\rm Alt}_5 }$, where $\psi _1  \in {\rm Irr}(S_1/(S_1 \cap K))$ is of degree $2$. Note that $S_1^g=S_2$, $S_2^g=S_1$ and $S_1 \cap K=N \cap K \unlhd G$. So, reasoning by analogy as above shows that  $K^g ={\rm ker}\chi^g \cap M$, $S_2\cap K^g\unlhd T$, $\chi^g \in {\rm Irr}(T/(S_2\cap K^g))$ and $M/(S_2 \cap K^g) \cong {\rm Alt}_5 \times {\rm SL}_2(5)$. Let $x \in M$ such that $xN \in S_1/N $ be of order $5$  and $x^5(S_1 \cap K) \in N/(S_1 \cap K)$ be of order $2$. Then, we can assume that  $\alpha(x)=\chi(x(S_1 \cap K))+\chi^g(x(S_2 \cap K^g))=((-1+\sqrt{5})/2)+(\pm 1)$. Consequently, $\alpha$ is not rational. 
	This is  a contradiction. This forces $\alpha(1) \geq 6$ for every $\alpha \in {\rm Irr}_{\mathbb{Q}}^*(\lambda^G)$. Thus, ${\rm acd}_{\mathbb{Q}}^*(\lambda^G) \geq 6$, as wanted.
	\\{\bf Case iii.} Suppose that $[G:T]=1$. Hence, ${\rm ker}\lambda \trianglelefteq G $. We know that ${\rm acd}_{\mathbb{Q}}^*(\lambda^G)={\rm acd}_{\mathbb{Q}}^*(\lambda^{G/{\rm ker}\lambda})$ and $G/{\rm ker}\lambda$ satisfies the assumptions. So, if ${\rm ker}\lambda \neq 1$, then we get from induction that ${\rm acd}_{\mathbb{Q}}^*(\lambda^G) \geq 9/2$.
	Now, we consider the case when ${\rm ker}\lambda =1$.   We continue the proof in the following sub-cases:
	\\{\bf Sub-case a.} Let $\lambda$ be extendible to $\lambda_0 \in {\rm Irr}(M)$. By \cite[Corollary 6.17]{isaacs}, \begin{eqnarray} \label{eq1}
	{\rm Irr}(\lambda^M)=\{\lambda_0 \psi: \psi \in {\rm Irr}(M/N)\}.
	\end{eqnarray}
	By Lemma \ref{145}, $I_G(\lambda_0)=G$.
	Let $\psi_0=1_{M}, \psi_1,\ldots, \psi_t$ be representatives of the action of $G/N$ on ${\rm Irr}(M/N)$.  Then, $\lambda_0\psi_0, \lambda_0\psi_1,\ldots, \lambda_0\psi_t$ are representatives of the action of $G$ on ${\rm Irr}(\lambda^M)$. Hence, \begin{eqnarray}\label{eq1}
	{\rm Irr}^*_{\mathbb{Q}}(\lambda^G)=\dot{\cup}_{i=0}^t{\rm Irr}^*_{\mathbb{Q}} ((\lambda_0\psi_i)^G).
	\end{eqnarray}
	By Lemma \ref{Lemma 2.6}, we can assume that $\psi_t(1) \geq 8$ and $\psi_t$ is extendible to $\chi \in {\rm Irr}_{\mathbb{Q}}(G)$.
	Set $C={\rm ker}\lambda_0$. Then, $C \unlhd G$ and $C\cap N={\rm ker}\lambda=1$ considering $I_G(\lambda_0)=I_G(\lambda)=G$.
	We have $CN/N \unlhd G/N$ and  $CN/N \leq M/N$. So, either $CN=N$ or $CN=M$. In the former case, $C=C \cap N=1$. If $\lambda_0(1)=1$, then $M=M'\leq {\rm ker}\lambda_0=1$, a contradiction.  Therefore, $\lambda(1)=\lambda_0(1)>1$. It follows that $\lambda_0\psi_i(1) \geq 6$ for every $1 \leq i \leq t$ because $\psi_i(1) \geq 3$ for every $1 \leq i \leq t$.
	Hence,
	\begin{eqnarray} \label{eq732}
	\alpha(1) \geq 6 ~{\rm for~every~}  \alpha \in \cup_{i=1}^t{\rm Irr}((\lambda_0\psi_i)^G).
	\end{eqnarray}
	In the latter case,  since $C\cap N={\rm ker} \lambda=1$, we get that $M=N \times C$. However, $N$ is solvable and $M$ is perfect. Thus, $N=1$. Since $M=M'$ and $\lambda_0(1)=1$, we see that  $\lambda_0=\psi_0$ and $M$ is a minimal normal subgroup of $G$. 
	In view of   Corollary \ref{corollary 2.7}, Lemmas \ref{simple} and \ref{l7}, \begin{eqnarray} \label{eq739}
	\alpha(1) \geq 5 ~{\rm for~every~}  \alpha \in \dot\cup_{i=1}^t{\rm Irr}(\psi_i^G).
	\end{eqnarray} Note that $\psi_0\psi_i=\psi_i$ for every $0 \leq i \leq t$. So, in both possibilities, considering  \eqref{eq732} and \eqref{eq739} lead to  \begin{eqnarray} \label{eq400}
	\alpha(1) \geq 5 ~{\rm for~every~}  \alpha \in \dot\cup_{i=1}^t{\rm Irr}((\lambda_0\psi_i)^G).
	\end{eqnarray}
	If ${\rm Irr}_{\mathbb{Q}}^*((\lambda_0\psi_0)^G) =\emptyset$,  then  Lemma \ref{later}, \eqref{eq1} and \eqref{eq400} guarantee that   ${\rm acd}^*_{\mathbb{Q}}(\lambda^G) \geq 5$. Otherwise, Lemma \ref{Lemma 2.2}(ii) forces ${\rm acd}_{\mathbb{Q}}^*((\lambda_0\psi_0)^G+(\lambda_0\psi_t)^G) \geq (\psi_t(1)+1)/2 \geq 9/2$.
	It follows from  Lemma \ref{later}, \eqref{eq1} and \eqref{eq400} that  ${\rm acd}^*_{\mathbb{Q}}(\lambda^G) \geq 9/2$, as desired.
	\\{\bf Sub-case b.} Suppose that $\lambda$ is not extendible to $M$. Let $\chi \in {\rm Irr}^*_{\mathbb{Q}}(\lambda^G)$ be of  degree less than $5$ and $\psi \in {\rm Irr}(\chi_M)$.    First, assume $\lambda(1) >1$. Since $\lambda$ is not extendible to $M$, $\lambda(1) < \psi(1)$. However, $\lambda(1)>1$, $\lambda(1)$ divides $ \psi(1)$,  $\psi(1) $ divides $ \chi(1)$ and $\chi(1)<5$. This forces $\lambda(1)=2$ and $\psi(1)=\chi(1)=4$, which means that $\chi_M=\psi$. Set $K={\rm ker}\psi$.  Then, $K \trianglelefteq G$. Therefore, $KN/N \trianglelefteq G/N$. As $M/N$ is a chief factor of $G$ and $KN/N \leq M/N$, we deduce that either $KN=M$ or $KN=N$. In the former case, $K/(K\cap N)\cong M/N$. So, $K$ is non-solvable. However, $K\trianglelefteq G$ and $K \lneq M$. This is a contradiction with our assumption on $M$. In the latter case, $K =K \cap N={\rm ker}\lambda$ because $I_G(\lambda)=G$. So, $K=1$.   Since $M/N$ is a non-abelian chief factor of $G$,  $|M/N|$ is divisible  by a prime $r \geq 5$. It follows from the main result of \cite{feit1} that a maximal subgroup of  a Sylow $r$-subgroup of  $M$ is normal in $M$. This forces $|M/N|_r=r$. This means that $M/N$ is non-abelian simple. Recall that  $(M,N, \lambda)$ is a character triple and $\psi(1)=2\lambda(1)$.  Thus, \cite[Theorem 8.1]{zal} shows that $M/N \cong {\rm Alt}_5$, a contradiction.  
	Next, suppose that $\lambda(1)=1$. Regarding ${\rm ker}\lambda=1$, we have $N \leq {\bf Z}(G) \cap M$. This implies that $N={\bf Z}(M)$. Since $\chi \in {\rm Irr}^*_{\mathbb{Q}}(\lambda^G)$ and $\chi(1) \leq 4$,   Lemma \ref{prod} forces $M/N=M/{\bf Z}(M)$ to be simple. However,  $M/N \not\cong {\rm Alt}_5 $ and ${\rm Alt}_6$. Therefore,  $\psi(1) \geq 4$, applying Corollary \ref{cpc}. Hence, $\psi(1)=\chi(1)=4$ and $\chi_M=\psi$. As $\chi \in {\rm Irr}^*_{\mathbb{Q}}(\lambda^G)$, we have  $\psi \in {\rm Irr}^*_{\mathbb{Q}}(\lambda^M)$. Thus,  Lemma \ref{quasi} guarantees that $M/N=M/{\bf Z}(M) \cong {\rm Alt}_5$ or ${\rm Alt}_6$, a contradiction. Therefore, $\chi(1) \geq 5$ for every $\chi \in {\rm Irr}^*_{\mathbb{Q}}(\lambda^G)$. Consequently,  ${\rm acd}^*_{\mathbb{Q}}(\lambda^G) \geq 5 > 9/2$, as desired.
\end{proof}
\begin{theorem}  If ${\rm acd}^*_{\mathbb{Q}}(G)<9/2$, then $G$ is solvable.
\end{theorem}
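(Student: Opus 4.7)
The plan is to argue by contradiction, combining the machinery built up in the preceding three results. Assume $G$ is a counterexample of minimal order: $G$ is non-solvable and ${\rm acd}^*_{\mathbb{Q}}(G)<9/2$. Choose $M\unlhd G$ minimal subject to being non-solvable, and pick $N\unlhd G$ with $N\leq M$ such that $M/N$ is a chief factor of $G$. By minimality of $M$, the subgroup $N$ is solvable, $M$ is perfect, and $M/N$ is a direct power of a non-abelian simple group.

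Next, split into cases by the isomorphism type of $M/N$. If $M/N\cong {\rm Alt}_5$, Theorem \ref{Q1} gives ${\rm acd}^*_{\mathbb{Q}}(G)\geq 9/2$, contradicting the hypothesis. If $M/N\cong {\rm Alt}_6$, Theorem \ref{Q2} gives ${\rm acd}^*_{\mathbb{Q}}(G)>9/2$, again a contradiction. In the remaining case $M/N\not\cong {\rm Alt}_5,{\rm Alt}_6$, let $\lambda_1=1_N,\lambda_2,\ldots,\lambda_s$ be representatives of the $G$-orbits on ${\rm Irr}(N)$. Clifford correspondence produces the disjoint decomposition
$$
{\rm Irr}^*_{\mathbb{Q}}(G)=\dot\cup_{i=1}^{s}{\rm Irr}^*_{\mathbb{Q}}(\lambda_i^G),
$$
and Proposition \ref{Q3} ensures that ${\rm acd}^*_{\mathbb{Q}}(\lambda_i^G)\geq 9/2$ whenever the $i$th summand is non-empty.

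To conclude via the averaging principle of Lemma \ref{later}, it remains only to verify that at least one of the summands is non-empty, so that ${\rm acd}^*_{\mathbb{Q}}(G)$ is an average over a non-empty set and the inequality is not vacuous. This is supplied by Lemma \ref{us1}: applied to the non-abelian minimal normal subgroup $M/N$ of $G/N$, it produces a non-linear rational-valued irreducible character of $G/N$, which inflates to an element of ${\rm Irr}^*_{\mathbb{Q}}(G)$. Lemma \ref{later} then yields ${\rm acd}^*_{\mathbb{Q}}(G)\geq 9/2$, the required contradiction.

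No step of this argument is itself the main obstacle; all the genuine difficulty has already been absorbed into Theorems \ref{Q1}, \ref{Q2} and Proposition \ref{Q3}. The final theorem reduces to a clean case analysis on the chief factor $M/N$ followed by a Clifford partition plus Lemma \ref{later}. The only subtlety worth flagging is the non-emptiness check above, without which the averaging step would collapse to a vacuous inequality $0\geq 9/2$.
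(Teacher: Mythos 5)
Your argument is correct and follows essentially the same route as the paper: the same case split on the chief factor $M/N$ using Theorems \ref{Q1} and \ref{Q2}, the same Clifford partition of ${\rm Irr}^*_{\mathbb{Q}}(G)$ over $G$-orbit representatives in ${\rm Irr}(N)$ combined with Proposition \ref{Q3} and Lemma \ref{later}. The only cosmetic difference is that you secure non-emptiness of ${\rm Irr}^*_{\mathbb{Q}}(G)$ via Lemma \ref{us1} where the paper cites Lemma \ref{Lemma 2.6}; both work, and your explicit flagging of this step is a sensible precaution.
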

\begin{proof} Working towards a contradiction, let $G$ be non-solvable and $M \unlhd G$ be minimal such that $M $ is non-solvable. Suppose that $M/N$ is a chief factor of $G$.  If $M/N \cong {\rm Alt}_5$ or ${\rm Alt}_6$, then the theorem follows from Theorems  \ref{Q1} and \ref{Q2}. If $M/N \not\cong {\rm Alt}_5,{\rm Alt}_6$, then  let $\lambda_0, \ldots,\lambda_t$ be representatives of the action of $G$ on ${\rm Irr}(N)$. Then, ${\rm Irr}(G)=\dot{\cup}_{i=0}^t{\rm Irr}(\lambda_i^G)$. By Proposition \ref{Q3}, ${\rm Irr}^*_{\mathbb{Q}}(\lambda_i^G)=\emptyset$ or ${\rm acd}^*_{\mathbb{Q}}(\lambda_i^G)  \geq  9/2$.  In light of Lemma  \ref{Lemma 2.6},  ${\rm Irr}_{\mathbb{Q}}^*(G) \neq \emptyset$. So, ${\rm Irr}^*_{\mathbb{Q}}(\lambda_i^G) \neq \emptyset$ for some $1 \leq i\leq t$. Thus,  Lemma \ref{later} completes the proof.
\end{proof}
\section{Proofs of Theorems B and D}
In this section, we give the proofs of Theorems~B and D. Recall that in the  following $\Bbb F$  belongs to  $\{\Bbb C,\Bbb R, \Bbb Q\}$.
\begin{theorem}\label{key1}Let $G$ be a finite group and $N$ be a normal subgroup of $G$.
	\begin{enumerate}
		\item If $0<{\rm acd}_{\Bbb{C},even}(G|N)<{\rm acd}_{\Bbb{C},even}({\rm SL}_{2}(5)|{\rm SL}_{2}(5))=\frac{18}{5}$, then $N$ is solvable.
		\item If $0<{\rm acd}_{\Bbb{R},even}(G|N)<{\rm acd}_{\Bbb{R},even}({\rm SL}_{2}(5)|{\rm SL}_{2}(5))=\frac{18}{5}$, then $N$ is solvable.
		\item If $0<{\rm acd}_{\Bbb{Q},even}(G|N)<{\rm acd}_{\Bbb{Q},even}(\rm{Alt}_5|\rm{Alt}_5)=4$, then $N$ is solvable.
	\end{enumerate}
\end{theorem}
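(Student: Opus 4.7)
The plan is to proceed by contradiction, taking a counter-example $(G, N)$ with $|G|$ minimal such that $N \unlhd G$ is non-solvable and $0 < \acdek(G|N) < c$, where $c = 18/5$ in parts (1) and (2) and $c = 4$ in part (3). Rewriting the hypothesis as a linear inequality on the counts $n_d^{\Bbb F, even}(G|N)$ of $\Bbb F$-valued even-degree characters in $\Irr(G|N)$ of each degree $d$ gives, for $\Bbb F \in \{\Bbb C, \Bbb R\}$,
\[
2\, n_4^{\Bbb F, even}(G|N) + \sum_{d \geq 6,\ d \text{ even}} (5d - 18)\, n_d^{\Bbb F, even}(G|N) < 8\, n_2^{\Bbb F, even}(G|N),
\]
and, in case (3), $\sum_{d \geq 6,\ d \text{ even}} (d - 4)\, n_d^{\Bbb Q, even}(G|N) < 2\, n_2^{\Bbb Q, even}(G|N)$. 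In each case, the count of degree-$2$ $\Bbb F$-valued even characters in $\Irr(G|N)$ strictly dominates the weighted count of the higher-degree ones.

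The first step is to rule out any non-abelian minimal $G$-normal subgroup $L$ contained in $N$, mimicking the analogous reduction in Theorem \ref{key2}. Given such an $L$, Lemma \ref{us1} produces $\phi \in \Irr(L)$ of degree at least $5^s$ extending to some $\chi \in \Irr_{\Bbb F}(G)$ and, when $\Bbb F = \Bbb Q$, forces every element of $\Irrek(G|L)$ to have degree at least $4$. Gallagher's theorem places $\{\chi\mu : \mu \in \Irr(G/L)\} \subseteq \Irr(G|L) \subseteq \Irr(G|N)$, all of whose degrees are multiples of $\chi(1) \geq 5$. Using the disjoint decomposition $\Irrek(G|N) = \Irrek(G|L) \sqcup \Irrek(G/L|N/L)$, Lemma \ref{later} and the minimality of $(G, N)$ applied to $(G/L, N/L)$ when $N/L$ is non-solvable, the count of high-degree even $\Bbb F$-valued characters among the $\chi\mu$ will contradict the dominant-$n_2$ inequality above; when $N/L$ is solvable, a direct computation in the spirit of \cite[proof of Theorem~B]{moreto} forces $\acdek(G|N) \geq c$.

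Once every minimal $G$-normal subgroup contained in $N$ is abelian, pick $M \unlhd G$ minimal such that $M \leq N$ is non-solvable and, following the proof of Theorem \ref{key2}, choose an abelian minimal $G$-normal subgroup $T \leq M$ satisfying $T \leq [M, R]$ whenever $[M, R] \neq 1$ (with $R$ the solvable radical of $M$), of minimal order (of order $2$ whenever possible). Applying the minimality of $(G, N)$ to $(G/T, N/T)$, whose $N/T$ is still non-solvable, gives either $\Irrek(G/T|N/T) = \emptyset$ or $\acdek(G/T|N/T) \geq c$; Lemma \ref{later} together with the inequality above then forces $\Irrek(G|T)$ to contain a character of degree $2$. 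Corollary \ref{cpc} identifies $M \cong \SL_2(5)$ with $T = {\bf Z}(M)$ of order two, and writes $G = MC$ as a central product with $M \cap C = T$. The proof concludes by an explicit computation of $\acdek(G|N)$ using the character table of $\SL_2(5)$, Lemma \ref{cp0} to decompose characters above $T$ as products of characters of $M$ and $C$, and (in the real case) Lemma \ref{A5} to track Galois/complex-conjugacy pairs among the degree-$2$ contributions.

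The hardest step will be this final calculation, parallel to the closing paragraphs of the proof of Theorem \ref{key2}. Indeed, since $\acdek(\SL_2(5) | {\bf Z}(\SL_2(5))) = 7/2 < 18/5$ for $\Bbb F \in \{\Bbb C, \Bbb R\}$, the contribution of $\Irrek(G|T)$ alone is \emph{not} sufficient to push the average above $c$, and one must carefully combine it with the contribution of $\Irrek(G/T|N/T)$ (whose average is at least $c$ by minimality) to obtain $\acdek(G|N) \geq c$. For $\Bbb F = \Bbb Q$, one exploits that $\SL_2(5)$ has no rational-valued character of degree $2$, so that $\Irrek(G|T)$ consists entirely of characters of degree at least $4$, and the bound $c = 4$ becomes tight; the summation bound then mirrors the one at the end of the proof of Theorem \ref{key2}.
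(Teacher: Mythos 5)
Your proposal follows the paper's proof of this theorem essentially step for step: the same minimal-counterexample inequality on the counts $n_d^{\Bbb F}(G|N)$, the same elimination of non-abelian minimal normal subgroups of $G$ inside $N$ via Lemma \ref{us1} and Gallagher, the same choice of $M$ and $T$ leading through Lemma \ref{cp} to $G=MC$ a central product with $M\cong\SL_2(5)$ and $T={\bf Z}(M)$, the same disposal of the rational case via the absence of a rational degree-$2$ character of $\SL_2(5)$, and the same closing computation combining $\Irrek(G|T)$ with $\Irrek(G/T|N/T)$ via Lemmas \ref{cp0}, \ref{A5} and \ref{later}. You also correctly identify the one genuinely delicate point, namely that $\acdek(\SL_2(5)|{\bf Z}(\SL_2(5)))=7/2<18/5$ forces an explicit merged average rather than a bare appeal to Lemma \ref{later}; the only cosmetic difference is your case split on the solvability of $N/L$, which the paper avoids by a direct count showing $n_2^{\Bbb F}(G|N)=n_2^{\Bbb F}(G/L|N/L)$ is dominated by the number of even characters of degree at least $10$ in $\Irr_{\Bbb F}(G|L)$.
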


\begin{proof} Suppose $g(\Bbb{R})=g(\Bbb C)=18/5$ and $g(\Bbb{Q})=4$. Working towards the contradiction,  let $G$ be a counterexample of  minimal order. Hence, $N$ is a non-solvable normal subgroup of $G$
	such that  $\acdek(G|N)< g(\Bbb{F})$, which means  $$\frac{\sum\limits_{2\mid d}dn^{\Bbb F}_d(G|N)}{\sum\limits_{2\mid d}n^{\Bbb F}_d(G|N)}<g(\Bbb{F}).$$ This  leads us to
	$$\sum\limits_{2\mid d\geq 4}(5d-18)n^{\Bbb F}_d(G|N)< 8n^{\Bbb F}_2(G|N),$$ where $\Bbb{F}\in \{\Bbb{R},\Bbb{C}\}$ and
	$$\sum\limits_{2\mid d\geq 4}(d-4)n^{\Bbb F}_d(G|N)< 2n^{\Bbb F}_2(G|N),$$ where $\Bbb{F}=\Bbb{Q}$.
	
	Similar to the proof of Theorem \ref{key2}, we claim that every minimal normal subgroup of $G$ contained in $N$ is abelian. On the contrary, assume  $G$ has  a non-abelian minimal normal subgroup $L$
	contained in $N$.
	By Lemma \ref{us1}, $\Irr(L)$ possesses  a rational-valued character  $\phi$ such that $\phi(1)\geq 5$  and
	it is extendable to $\psi \in \Irr_{\Bbb Q}(G)$.  We deduce from Gallagher's theorem that
	$$n^{\Bbb F}_2(G/L|N/L)\leq n^{\Bbb F}_2(G/L)\leq \sum\limits_{ 2\mid d\geq 10} n^{\Bbb F}_d(G|L)\leq \sum\limits_{ 2\mid d\geq 10} n^{\Bbb F}_d(G|N).$$
	
	On the other hand, as $\Irr(G|L)$ does not contain any character of degree $2$, we obtain that  $n^{\Bbb F}_2(G|N)=n^{\Bbb F}_2(G/L|N/L)$, which means

 $$8n^{\Bbb F}_2(G|N)\leq \sum\limits_{ 2\mid d\geq 10} 8n^{\Bbb F}_d(G|N)\leq \sum\limits_{ 2\mid d\geq 10} (5d-18)n^{\Bbb F}_d(G|N),$$

 when $\Bbb F\in \{\Bbb R, \Bbb C\}$ and
 $$2n^{\Bbb F}_2(G|N)\leq \sum\limits_{ 2\mid d\geq 10} 2n^{\Bbb F}_d(G|N)\leq \sum\limits_{ 2\mid d\geq 10} (d-4)n^{\Bbb F}_d(G|N),$$

 when $\Bbb F=\Bbb Q$, is a  contradiction.
	%
	Hence,  every minimal normal subgroup of $G$ contained in $N$ is abelian.
	Let $M$ be non-solvable and normal subgroup of $G$ contained in $N$ of  minimal order such
	that $M\leq N$ is non-solvable. Obviously, $M$ is perfect and contained in the last term of the derived
	series of $N$. We may  choose a minimal normal subgroup $T$ of $G$ such that $T \leq M$ and when
	$	[M,R]\not =1$ we choose $T \leq [M,R],$ where $R$ denotes the radical	solvable of $M$.
	We know  $T$ is abelian. It follows that
	$N/T$ is non-solvable since $N$ is non-solvable. By the minimality of $G$, we  have
	$	\acdek(G/T|N/T) \geq  g(\Bbb F)$. This implies that
	$\acdek(G|N) <  g(\Bbb F) \leq  \acdek(G/T|N/T)$.  So,

$$ 8n^{\Bbb F}_2(G/T|N/T)\leq  \sum\limits_{2\mid d\geq 4}(5d-18)n^{\Bbb F}_d(G/T|N/T)\leq   \sum\limits_{2\mid d\geq 4}(5d-18)n^{\Bbb F}_d(G|N)< 8n^{\Bbb F}_2(G|N),$$
when $\Bbb F\in \{\Bbb R, \Bbb C\}$ and

	$$ 2n^{\Bbb F}_2(G/T|N/T)\leq  \sum\limits_{2\mid d\geq 4}(d-4)n^{\Bbb F}_d(G/T|N/T)\leq  \sum\limits_{2\mid d\geq 4}(d-4)n^{\Bbb F}_d(G|N)< 2n^{\Bbb F}_2(G|N),$$
	
	when $\Bbb F= \Bbb Q$.
	Therefore, $n^{\Bbb F}_2(G/T|N/T)< n^{\Bbb F}_2(G|N)$ and  there exists $\chi \in \Irr_{\Bbb F}(G|N)$ such that $\chi(1) = 2$ and
	$T\nleq \ker(\chi)$. Hence, by Lemma \ref{cp}, there exists  a normal subgroup $C$ of $M$ such that
	$G = MC$ is a central product
	with $T =M\cap C= {\bf Z}(M)$ of order $2$  and
	$	M \cong  \SL_2(5)$.  Note that $\chi_M\in \Irr_{\Bbb F}(M)$. As $\SL_2(5)$ does not contain any rational-valued irreducible  character of degree $2$,
	we get that ${\Bbb F}\in \{\mathbb{R}, \mathbb{C}\}$.
	From Lemma \ref{cp0} we have  that
	$$n^{\mathbb{C}}_d(G|T)=\sum\limits_{t\mid d}n^{\mathbb{C}}_t(C|T)n^{\mathbb{C}}_{d/t}(M|T), \ \ \ \ (*)$$
	and
	$2=\chi(1)=\alpha(1)\beta(1)$ for some $\beta\in \Irr(C|T)$  and $\alpha\in \Irr(M|T)$.
	Since $M\cong \SL_2 (5)$,  $\alpha(1)\in\{2,4,6\}$, and hence $\alpha(1)=2$ and   $\beta(1)=1$. Therefore,   $\beta \in \Irr(C|T)$ is an extension of
	the unique non-principal linear character  $\lambda\in \Irr(T)$. Thus,  using Gallagher's theorem \cite[Theorem 6.17]{isaacs}, we  have   $n^{{\Bbb C}}_d(C|T) = n^{{\Bbb C}}_d(C/T)$ for each positive integer $d$.
	
	First, let  $\Bbb F=\Bbb R$. It follows from Lemma \ref{A5} that either $\Irr_{\mathbb{R}}(G|T)=\Irr(G|T)$
	or $\acd_{\Bbb R}(G|T)\geq 4$. If the second case occurs, then  we obtain, by Lemma \ref{later},
	$\acd_{\Bbb R}(G|N)\geq 18/5$, a contradiction. So, we  must have $\Irr_{\Bbb R}(G|T)=\Irr(G|T)$.	

 	First, we show that   $\chi_C$ is not irreducible. Indeed, otherwise $\chi_C\in \Irr(C|\lambda)$ and as $\lambda$ extends to $\beta \in \Irr(C)$, Gallagher's theorem yields that $\chi_C=\eta\beta$ for some $\eta\in \Irr(C/T)$.  Noticing  $G/T\cong C/T\times M/T$, we have  $\eta$ extends to $\eta_0\in \Irr(G)$. Now,  applying  \cite[Theorem 6.16]{isaacs},  $\Irr((\eta\beta)^G)=\{\eta_0\theta\ |\ \theta\in \Irr(\beta^G)\}$. As $\eta\beta$ extends to $G$, we deduce that  $\beta$ is  extendable to $G$, which is impossible.  	
	Thus,  $\chi_C=2\beta$  for some linear character $\beta\in \Irr(C|\lambda)$.
		As  $\chi\in \Irr_{\Bbb R}(G|T)$ is an extension of $\chi_M\in \Irr_{\Bbb R}(M|T)$,  Gallagher's theorem yields that $\gamma\chi\in \Irr(G|T)$ for all $\gamma \in \Irr(G/M)=\Irr(C/T)$.	
	  Hence, $(\gamma\chi)_C=2\gamma\beta$ for all  character $\gamma\in \Irr(C/T)$.   From $\Irr_{\Bbb R}(G|T)=\Irr(G|T)$, we infer that  $\beta$ and $\gamma\beta$ are real-valued. As linear characters do not vanish on any conjugacy classes, we deduce that $\gamma$ is real-valued for every character $\gamma\in \Irr(C/T)$. Hence $n^{\Bbb C}_d(C|T)=n^{\Bbb C}_d(C/T)=n^{\Bbb R}_d(C/T)$.

	Defining $\Irr_{\Bbb F, odd}(M/T)=\Irr_{\Bbb F}(M/T)-\Irr_{\Bbb F, even}(M/T)$,  we have   $\Irr_{\Bbb R,even}(G/T|N/T)=\{\alpha\beta \ |\  \alpha\in \Irr_{\Bbb R, even}(C/T), 1\not =\beta\in \Irr_{\Bbb R,odd}(M/T)\}\cup  \{\alpha\beta \ |\  \alpha\in \Irr_{\Bbb R}(C/T), \beta\in \Irr_{\Bbb R,even}(M/T)\}\cup \{\alpha1_{M/T}\ |\ \alpha\in \Irr_{\Bbb R,even}(C/T|(C/T\cap N/T))\}$. Note that there are 4 characters in $\Irr(M|T)$, two of degree $2$, one of degree $4$ and one of degree $6$.
	Also,  looking at the degrees of the characters in $\Irr(M/T)=\Irr(\rm {Alt}_5)$, and Lemme \ref{cp0}, we have
	$$	\acd_{\Bbb R,even}(G|N)=\frac{\sum\limits_{\theta\in \Irr_{\Bbb R, even}(G/T|N/T)}\theta(1) \ \ + \sum\limits_{\theta\in\Irr_{\Bbb R, even}(G|T)}\theta(1)}{|\Irr_{\Bbb R, even}(G/T|N/T)|+|\Irr_{\Bbb R, even}(G|T)| }=$$
		
	$$	\frac{4\sum\limits_{d\geq 1}dn_d^{\Bbb R}(C/T)+11\sum\limits_{2\mid d}dn^{\Bbb R}_d(C/T)+\sum\limits_{2\mid d}dn_d^{\Bbb R}(C/T|(N/T\cap C/T))+ 14\sum\limits_{d\geq 1}dn^{\Bbb R}_d(C/T)}{\sum\limits_{d\geq 1}n^{\Bbb R}_d(C/T) +3\sum\limits_{2\mid d}n^{\Bbb R}_d(C/T)+\sum\limits_{2\mid d}n^{\Bbb R}_d(C/T|(N/T\cap C/T))+ 4\sum\limits_{d\geq 1}n^{\Bbb R}(C/T)}$$
	$$
		=\frac{18\sum\limits_{d\geq 1}dn_d^{\Bbb R}(C/T)+11\sum\limits_{2\mid d}dn^{\Bbb R}_d(C/T)+\sum\limits_{2\mid d}dn_d^{\Bbb R}(C/T|(N/T\cap C/T))}{5\sum\limits_{d\geq 1}n_d^{\Bbb R}(C/T)+3\sum\limits_{2\mid d}n^{\Bbb R}_d(C/T)+\sum\limits_{2\mid
								 d}n^{\Bbb R}_d(C/T|(N/T\cap C/T))}$$

		$$\geq \frac{18\sum\limits_{d\geq 1}n_d^{\Bbb R}(C/T)+22\sum\limits_{2\mid d}n^{\Bbb R}_d(C/T)+2\sum\limits_{2\mid d}n_d^{\Bbb R}(C/T|(N/T\cap C/T))}{5\sum\limits_{d\geq 1}n_d^{\Bbb R}(C/T)+3\sum\limits_{2\mid d}n^{\Bbb R}_d(C/T)+\sum\limits_{2\mid d}n^{\Bbb R}_d(C/T|(N/T\cap C/T))}.$$

As    $\sum\limits_{2\mid d}n^{\Bbb R}_d(C/T)\geq \sum\limits_{2\mid d}n^{\Bbb R}_d(C/T|(N/T\cap C/T))$,  we infer

$$5(18\sum\limits_{d\geq 1}n_d^{\Bbb R}(C/T)+22\sum\limits_{2\mid d}n^{\Bbb R}_d(C/T)+2\sum\limits_{2\mid d}n_d^{\Bbb R}(C/T|(N/T\cap C/T)))$$$$\geq 18(5\sum\limits_{d\geq 1}n_d^{\Bbb R}(C/T)+3\sum\limits_{2\mid d}n^{\Bbb R}_d(C/T)+\sum\limits_{2\mid d}n^{\Bbb R}_d(C/T|(N/T\cap C/T))),$$

  equivalently $\acd_{\Bbb R, even}(G|N)\geq 18/5$, a contradiction. Now, let $\Bbb F=\Bbb C$, then by a similar argument we have
	
		$$\acd_{{\Bbb C}, even}(G)=\frac{4\sum\limits_{d\geq 1}dn_d^{\Bbb C}(C/T)+ 11\sum\limits_{2\mid d}dn_d^{\Bbb C}(C/T) +\sum\limits_{2\mid d}dn_d^{\Bbb C}(C/T|(N/T\cap C/T))+14\sum\limits_{d\geq 1}dn^{\Bbb C}_d(C/T)}{\sum\limits_{d\geq 1}n_d^{\Bbb C}(C/T)+3\sum\limits_{2\mid d}n_d^{\Bbb C}(C/T) ++\sum\limits_{2\mid d}n_d^{\Bbb C}(C/T|(N/T\cap C/T))+4\sum\limits_{d\geq 1}n^{\Bbb C}_d(C/T)}$$$$
		=\frac{18\sum\limits_{d \geq 1}dn_d^{\Bbb C}(C/T)+11\sum\limits_{2\mid d}dn_d^{\Bbb C}(C/T)+\sum\limits_{2\mid d}dn_d^{\Bbb C}(C/T|(N/T\cap C/T))}{5\sum\limits_{d> 1}n_d^{\Bbb C}(C/T)+3\sum\limits_{2\mid d}n_d^{\Bbb C}(C/T)+\sum\limits_{2\mid d}n_d^{\Bbb C}(C/T|(N/T\cap C/T))}\geq 18/5,$$
	
	which is the final contradiction.
\end{proof}
\begin{theorem}\label{us2}
	Let $N, M \unlhd G$, $N\subseteq M$, $M/N$ be a non-abelian chief
	factor of $G$ and $\lambda \in {\rm Irr}(N)$ such that  either  $M / N$
	is non-simple, $\lambda(1)[G:I_G(\lambda)]>1$,  or $\lambda$ is extendible to $M$. If ${\rm Irr}_{\mathbb{F},even}(\lambda^{G}) \neq \emptyset$, then
	${\rm acd}_{\mathbb{F},even}(\lambda^{G})
	\geq 4$.
\end{theorem}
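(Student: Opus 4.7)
The strategy is to partition $\Irr_{\Bbb{F},\mathrm{even}}(\lambda^G)$ into classes on each of which the average degree is at least $4$, and then invoke Lemma~\ref{later}. The mechanism is the pairing idea of Lemma~\ref{Lemma 2.2}(i): after choosing a character $\psi \in \Irr(M/N)$ of large degree extendible to an $\Bbb{F}$-valued character $\hat\psi \in \Irr_{\Bbb{F}}(G)$, each low-degree character above $\lambda$ gets matched with a character above $\lambda_0 \psi$ of much higher degree, where $\lambda_0$ extends $\lambda$ to $M$ when available.

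The preparation step is to apply Lemma~\ref{us1} to the non-abelian chief factor $M/N$, which is a direct product of $s \geq 1$ isomorphic copies of a non-abelian simple group $S$, to produce $\psi \in \Irr(M/N)$ with $\psi(1) \geq 5^s$ that is extendible to some $\hat\psi \in \Irr_{\Bbb{F}}(G)$. When $s \geq 2$ this gives $\psi(1) \geq 25$; when $s = 1$ I would further invoke Lemma~\ref{Lemma 2.6} to take $\psi(1) \geq 8$, with the single exception $S \cong \mathrm{PSL}_2(5)$ where $\psi(1) = 5$ but the factor $\alpha_\psi = \gcd(2,\psi(1)+1) = 2$ compensates. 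In each of these possibilities the quantity $\alpha_\psi (\psi(1)+1)/2$ is at least $4$.

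Assume first that $\lambda$ is extendible to $\lambda_0 \in \Irr(M)$, i.e.\ hypothesis (c) holds. Lemma~\ref{145} then yields $T := I_G(\lambda) = I_G(\lambda_0)$, hence $M \leq T$, and Gallagher's theorem \cite[Corollary 6.17]{isaacs} gives $\Irr(\lambda^M) = \{\lambda_0 \eta : \eta \in \Irr(M/N)\}$. I would partition $\Irr_{\Bbb{F},\mathrm{even}}(\lambda^G)$ by the $G$-orbit of the constituent $\lambda_0\eta$ appearing in $\chi_M$. For non-trivial $\eta$, the fact that the smallest non-trivial irreducible degree of a non-abelian simple group is at least $3$ forces $\chi(1) \geq \lambda(1)\eta(1)[G:I_G(\lambda_0\eta)] \geq 3$, and evenness of $\chi(1)$ then gives $\chi(1) \geq 4$. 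For the block above $\lambda_0$ itself, Lemma~\ref{Lemma 2.2}(i) (when $T = G$) and an even-degree adaptation of Lemma~\ref{2.2} (when $T \neq G$) pair each such $\chi$ with a character above $\lambda_0 \psi$, yielding average at least $[G:T]\lambda(1)\alpha_\psi(\psi(1)+1)/2 \geq 4$ on that class.

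The remaining hypotheses reduce to or imitate this argument. In the non-simple case (a), the bound $\psi(1) \geq 25$ together with Corollary~\ref{corollary 2.7}, applied through the character-triple isomorphism $(M,N,\lambda) \to (M^*, Z, \lambda^*)$, forces every even-degree irreducible constituent of $\lambda^G$ to have degree at least $4$. In case (b), the inequality $\chi(1) \geq \lambda(1)[G:I_G(\lambda)] \geq 2$ combined with $2 \mid \chi(1)$ already gives $\chi(1) \geq 4$ except possibly when $\lambda(1)[G:I_G(\lambda)] = 2$, and this residual situation is resolved by the same twist-pairing with $\hat\psi$ as in case (c). The main obstacle is handling hypothesis (b) in the absence of (c), since Gallagher's parameterization of $\Irr(\lambda^M)$ is then unavailable: here one uses projective-representation theory of $M/N$, through the character-triple isomorphism, to show that every irreducible constituent of $\lambda^M$ has degree at least $2\lambda(1)$, thereby returning to the pairing framework.
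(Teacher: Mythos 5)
Your overall architecture --- partition $\Irr(\lambda^G)$ into classes indexed by the $G$-orbit of the constituent of $\chi_M$, show every class other than the one over $\lambda_0$ consists of characters of degree at least $4$, and average the class over $\lambda_0$ together with the class over $\lambda_0\psi$ via Lemma \ref{Lemma 2.2}(i) and Lemma \ref{2.2} --- is the same as the paper's, and your treatment of the extendibility hypothesis and of the hypothesis $\lambda(1)[G:I_G(\lambda)]>1$ is essentially correct. (In the latter case, once $M\leq I_G(\lambda)$ and $\lambda$ does not extend to $M$, every constituent of $\lambda^M$ has degree at least $2\lambda(1)$, so every $\chi\in\Irr(\lambda^G)$ already satisfies $\chi(1)\geq 2\lambda(1)[G:I_G(\lambda)]\geq 4$; the pairing you invoke there is superfluous but harmless.)

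The genuine gap is in the non-simple case when $\lambda(1)[G:I_G(\lambda)]=1$ and $\lambda$ does not extend to $M$: this is precisely where a degree-$2$ even constituent must be excluded, and your justification does not do it. Corollary \ref{corollary 2.7} cannot be ``applied through the character-triple isomorphism.'' Its hypothesis is that $D$ is a minimal normal subgroup of the ambient group which is a \emph{direct} product of non-abelian simple groups, and its proof rests on the fact that non-principal \emph{ordinary} irreducible characters of simple groups have degree at least $3$. After replacing $(G,N,\lambda)$ by an isomorphic triple $(G^{*},N^{*},\lambda^{*})$ with $N^{*}$ central, the relevant subgroup $M^{*}$ is a central product of quasi-simple covers of $S$, not a direct product of simple groups, and faithful irreducible characters of such covers can have degree $2$ (e.g.\ ${\rm SL}_2(5)$); so neither the statement nor the proof of Corollary \ref{corollary 2.7} transports. (The bound $\psi(1)\geq 25$ also plays no role in bounding the low-degree constituents.) The paper closes this case with a different input: after reducing by induction to $\ker\lambda=1$ (so that $N\leq {\bf Z}(G)$), a constituent $\chi$ of degree $2$ restricts to a faithful irreducible degree-$2$ character of $M$, and Feit's theorem \cite{feit1} on faithful representations of degree less than $r-1$, for a prime $r\geq 5$ dividing $|M/N|$, then forces $M/N$ to be simple, contradicting the hypothesis. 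You need this (or a substitute, such as the classification of finite subgroups of ${\rm PGL}_2(\Bbb C)$, or a central-product analogue of Corollary \ref{corollary 2.7} proved from scratch using transitivity of $G$ on the factors) to complete the argument.
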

\begin{proof} Set  $T= I_G(\lambda)$. If $[G:T] \geq 3$, then  Clifford correspondence forces ${\rm acd}_{\mathbb{F},even}(\lambda^G) \geq 4$, as wanted.  Next, assume that  $[G:T] \leq 2$. Then, $N \leq T \unlhd G$ and $N \leq M \cap T \unlhd G$. However, $M/N $ is a chief factor of $G$  and $M \cap T \leq M$. This implies that either $M \cap T=N$ or $M \cap T=M$. In the former case, $M/N=M/(M \cap T)\cong M T/T \leq G/T$, which is a contradiction as $G/T$ is of order at most  $2$. So, $M=M \cap T \leq T$.
	By Lemmas \ref{Lemma 2.6} and \ref{us1},  there exists a
	character  $\psi \in {\rm Irr}(M/N)$ which extends to an element in ${\rm Irr}_{\mathbb{F}}(G/N)$ and  either $\psi(1) \geq 8$ or $M/N \cong {\rm Alt}_5$ and $\psi(1)=5$.  Let $K={\rm ker}\lambda$.
	Assume  $\lambda$ is extendible to $\lambda_0 \in {\rm Irr}(M)$. By \cite[Corollary 6.17]{isaacs}, $
	{\rm Irr}(\lambda^M)=\{\lambda_0 \psi: \psi \in {\rm Irr}(M/N)\}.
	$ Let $\psi_0=1_{M}, \psi_1,\ldots, \psi_t$ be representatives of the action of $T/N$ on ${\rm Irr}(M/N)$  with  $\psi_t=\psi$.  Then, $\lambda_0\psi_0, \lambda_0\psi_1,\ldots, \lambda_0\psi_t$ are representatives of the action of $T$ on ${\rm Irr}(\lambda^M)$. Hence, \begin{eqnarray}\label{eq723}
	{\rm Irr}_{\mathbb{F},even}(\lambda^T)=\dot{\cup}_{i=0}^t{\rm Irr}_{\mathbb{F},even} ((\lambda_0\psi_i)^T).
	\end{eqnarray}
	For every $1\leq i \leq t $, we have $\psi_i(1)\geq 3$. This yields that
	\begin{eqnarray} \label{eq113}
	\alpha(1) \geq 4 ~{\rm for~every~}  \alpha \in \dot\cup_{i=1}^t{\rm Irr}_{\Bbb{F},even}((\lambda_0\psi_i)^G).
	\end{eqnarray}
	If ${\rm Irr}_{\mathbb{F},even}((\lambda_0\psi_0)^G) =\emptyset$,  then  Lemma \ref{later}, \eqref{eq723} and \eqref{eq113} guarantee that   ${\rm acd}_{\mathbb{F},even}(\lambda^G) \geq 4$. Now, suppose that ${\rm Irr}_{\mathbb{F},even}((\lambda_0\psi_0)^G) \neq \emptyset$. Recall that if $T \neq G$, then  $[G:T]=2$. Hence, $T\vartriangleleft G$ and if $T \neq G$, then  ${\rm Irr}_{\mathbb{F},even}(\lambda^G)={\rm Irr}_{\mathbb{F}}(\lambda^G)  $. Also, $(\lambda_0\psi_0)\psi=\lambda_0\psi \in {\rm Irr}(M)$. By Lemma \ref{145}, $I_G(\lambda_0)=T$. So, $T \leq I_G(\lambda_0\psi)$. If $g \in I_G(\lambda_0\psi)$, then $(\lambda_0\psi)^g=\lambda_0\psi$. Hence, $\lambda^g=\lambda$. This signifies that  $g \in T$. Consequently, $I_G(\lambda_0\psi)=I_G(\lambda_0\psi_0)$. So, Lemmas  \ref{Lemma 2.2}(i) and \ref{2.2} force  ${\rm acd}_{\mathbb{F},even}((\lambda_0\psi)^{G}+(\lambda_0\psi_0)^{G})
	> 4$.  Thus, Lemma \ref{later},  \eqref{eq723}  and \eqref{eq113} show that ${\rm acd}_{\mathbb{F},even}(\lambda^G)\geq 4$, as desired.

	Next, suppose that
	$\lambda$ does not extend to $M$.  Let $\chi  \in
	{\rm Irr}(\lambda^T)$ be of minimal degree such that $\chi ^G \in{\rm Irr}_{\mathbb{F},even}(\lambda^G) $
	and $\varphi \in {\rm Irr}(\chi_{M})$. Since
	$\lambda$ does not extend to $M$, $\chi(1) \geq \varphi(1)\geq 2 \lambda(1)$. Let $T \neq G$ or $\lambda(1)\neq 1$. Then $\chi^G(1) \geq 4$. It follows  that  $\alpha(1) \geq 4$ for every $\alpha \in {\rm Irr}(\lambda^G)$. Consequently, ${\rm acd}_{\mathbb{F},even}(\lambda^G)\geq  4$, as wanted. Now, let $T=G$ and $\lambda(1)=1$. Set $K={\rm ker}\lambda$.  Hence, $K \unlhd G$. Also, the assumptions force  $M/N$ to be non-simple.    If $K \neq 1$, then since $G/K$ satisfies the assumptions of the theorem and ${\rm acd}_{\mathbb{F},even}(\lambda^G) ={\rm acd}_{\mathbb{F},even}(\lambda^{G/K})$, the proof follows from induction. It remains to consider the case when $K=1$. If $\chi(1) \geq 4$, then ${\rm acd}_{\mathbb{F},even}(\lambda^G) \geq 4$, as wanted.
	Finally, assume that $\chi(1) \leq 3$. Since $\chi(1)$ is even, $\chi(1)=2$.
	If
	$\varphi(1)< \chi (1) =2$, then $\varphi(1)=1$, a contradiction because $\lambda$ is not extendible to $M$.
	Thus $\varphi(1)=\chi(1) =2$. So, $\chi_M=\varphi$ and ${\rm ker}\varphi \trianglelefteq G$. Consequently, $N{\rm ker}\varphi/N \trianglelefteq G/N$.
	As $M/N$ is a chief factor of $G$, we have either $ {\rm ker}\varphi \leq N$ or $N{\rm ker}\varphi=M$. Recall that $I_G(\lambda)=G$. This forces ${\rm ker}\varphi\cap N={\rm ker}\lambda=1$. Therefore, ${\rm ker}\varphi \cong N{\rm ker}\varphi/N$ and in the latter case, we have ${\rm ker}\varphi \times N =M$.  Since $\lambda$ is not extendible to $M$, the latter case cannot occur.  In the former  case,   ${\rm ker}\varphi=1$. Since $M/N$ is non-solvable, there exists a prime divisor $r$ of $|M/N|$ such that $r \geq 5$. Hence, \cite{feit1} implies that a maximal subgroup of a Sylow $r$-subgroup of $M$ is normal in $M$, which is impossible as $M/N$ is not simple.
	
	Now, all possibilities have been considered and we are done.
\end{proof}
In the following theorem, let $f(\mathbb{Q})=4$ and $f(\mathbb{R})=f(\mathbb{C})=7/2$.
\begin{theorem} \label{theoremb}    Let $N \unlhd G$ and $\lambda \in {\rm Irr}(N)$. If
	${\rm Irr}_{\mathbb{F},even}(\lambda^{G})\neq \emptyset$ and $G / N$ is
	non-solvable, then
	${\rm acd}_{\mathbb{F},even}(\lambda^{G}) \geqslant f(\mathbb{F})$.
\end{theorem}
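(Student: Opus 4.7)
The proof will rely on Theorem \ref{us2} to dispose of all but one configuration, and then a small case analysis (via Lemma \ref{quasi} and the Atlas) to pin down the remaining one.

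First I would choose a normal subgroup $M$ of $G$ with $N\leq M$ such that $M/N$ is a non-abelian chief factor of $G/N$; such an $M$ exists because $G/N$ is non-solvable. I then apply Theorem \ref{us2} to $\lambda^G$. Whenever any one of (i) $M/N$ is non-simple, (ii) $\lambda(1)[G:I_G(\lambda)]>1$, or (iii) $\lambda$ extends to $M$ holds, that theorem yields $\mathrm{acd}_{\Bbb{F},even}(\lambda^G)\geq 4\geq f(\Bbb{F})$ and we are done. So from now on I assume $M/N$ is simple non-abelian, $\lambda(1)=1$, $I_G(\lambda)=G$, and $\lambda$ is not extendible to $M$.

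Since $I_G(\lambda)=G$, the subgroup $K:=\ker\lambda$ is normal in $G$; as $\mathrm{acd}_{\Bbb{F},even}(\lambda^G)=\mathrm{acd}_{\Bbb{F},even}(\lambda^{G/K})$ and all hypotheses pass to $G/K$, I may replace $G$ by $G/K$ and assume $\ker\lambda=1$. Then $N$ is cyclic, central in $G$, and $\lambda$ is faithful on $N$. In this situation $Z(M)=N$ (as $Z(M)/N\leq Z(M/N)=1$) and, writing $M=M'N$ which holds because $M/N$ is perfect, $M'$ is a perfect central cover of $M/N$ with $M'\cap N\leq Z(M')$. Every $\mu\in\mathrm{Irr}(M|\lambda)$ restricts to $M'$ as a sum of irreducible characters that are faithful on $M'\cap N$, so its degree is bounded below by the minimum degree of a faithful irreducible character of $M'$.

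The key reduction is Lemma \ref{quasi}: if $M/N$ is \emph{not} one of $\mathrm{Alt}_5,\mathrm{Alt}_6,\mathrm{Alt}_7,\mathrm{PSL}_2(7),\mathrm{PSU}_4(2)$ (and for $\Bbb{F}=\Bbb Q$, not one of $\mathrm{Alt}_5,\mathrm{Alt}_6$), then every faithful irreducible character of $M'$ has degree $\geq 5$, hence every $\mu\in\mathrm{Irr}(M|\lambda)$ has $\mu(1)\geq 5$, and by Clifford correspondence every $\chi\in\mathrm{Irr}(G|\lambda)$ satisfies $\chi(1)\geq 5\geq f(\Bbb F)$, finishing the proof. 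Thus we are left with $M/N$ in the short list above, and for each of these groups the Schur multiplier and possible linear faithful $\lambda$ are very restricted, so $M$ itself is a known quasi-simple group.

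For the remaining finite list of possibilities, I would carry out a short Atlas-based case analysis: list $\mathrm{Irr}(M|\lambda)$ with degrees and fields of values, determine the $G/M$-orbits on it together with the stabilisers (which are normal in $G$ as $G/MD\leq \mathrm{Out}(M/N)$ is small), and use Clifford correspondence to compute (or lower-bound) the contribution of each orbit to $\mathrm{acd}_{\Bbb F,even}(\lambda^G)$. The \textbf{main obstacle}, and the tight case, is $M/N\cong\mathrm{Alt}_5$ with $\lambda$ of order $2$, forcing $M\cong\mathrm{SL}_2(5)$: here $\mathrm{Irr}(M|\lambda)$ consists of exactly four characters of degrees $2,2,4,6$, all real-valued (the two of degree $2$ being Galois-conjugate over $\Bbb{Q}(\sqrt 5)$) with only the $4$ and $6$ rational. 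When $G=M$ this gives $\mathrm{acd}_{\Bbb R,even}(\lambda^G)=14/4=7/2$ and $\mathrm{acd}_{\Bbb Q,even}(\lambda^G)=5$, attaining the claimed bound $f(\Bbb F)$ in the real/complex case; for $G\supsetneq M$ an induction/extension argument (either the two degree-$2$ characters get fused, giving a degree-$4$ real character, or $G/M$ acts trivially on each orbit, leaving the average invariant) shows the averages cannot decrease below $7/2$, and the remaining groups ($2,3,6{\cdot}\mathrm{Alt}_6$, $2,3,6{\cdot}\mathrm{Alt}_7$, $\mathrm{SL}_2(7)$, $2{\cdot}\mathrm{PSU}_4(2)$) are checked by direct Atlas inspection to produce averages strictly exceeding $f(\Bbb F)$.
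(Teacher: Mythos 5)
Your overall strategy (dispose of most configurations with Theorem \ref{us2}, then analyse the few surviving quasi-simple covers) is the same as the paper's, but there are two genuine gaps.

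First, your opening step fails: a non-solvable group $G/N$ need not possess a non-abelian minimal normal subgroup, so a normal $M$ with $M/N$ a \emph{non-abelian chief factor of $G$ sitting directly above $N$} need not exist (take $G/N\cong \SL_2(5)$, whose unique minimal normal subgroup is central of order $2$). Since Theorem \ref{us2} requires exactly this configuration, your reduction cannot start. The paper circumvents this by inducting on $|G|+[G:N]$: it chooses $E\unlhd G$ maximal with $N\leq E$ and $G/E$ non-solvable, splits $\Irr(\lambda^G)$ into the sets $\Irr(\mu_i^G)$ for orbit representatives $\mu_i\in\Irr(\lambda^E)$, and applies induction plus Lemma \ref{later} unless $N=E$. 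Only then does $G/N$ have a unique minimal normal subgroup, which is automatically non-solvable, so the non-abelian chief factor above $N$ exists --- and, crucially, $G/M$ is then solvable, which you never secure.

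Second, your endgame in the tight case $M'\cong\SL_2(5)$ is not a proof when $G\supsetneq M$. The dichotomy ``the two degree-$2$ characters get fused, or $G/M$ acts trivially on each orbit, leaving the average invariant'' is unjustified: even when $G$ stabilises each $\mu_i\in\Irr(M|\lambda)$ of degree $2,2,4,6$, the irreducible characters of $G$ lying over $\mu_1,\mu_2$ could a priori be numerous and of degree $2$ while few characters of moderate degree lie over $\mu_3,\mu_4$, driving the average of even-degree $\mathbb{F}$-valued characters toward $2$. One needs a Gallagher/Clifford-type matching between the blocks (as in Lemmas \ref{Lemma 2.2} and \ref{2.2}) or the central-product structure $G=M_1C$ supplied by Corollary \ref{cpc}; the paper gets the clean count $(2+2+4+6)/4=7/2$ precisely because its reduction forces $G/N\cong{\rm Alt}_5$, so $\lambda^G$ has exactly four constituents. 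A smaller remark: since only \emph{even} degrees below $f(\mathbb{F})\leq 4$ matter, i.e.\ degree $2$, your appeal to Lemma \ref{quasi} and the five-group list is heavier than needed --- a faithful irreducible character of degree $2$ of a quasi-simple group already forces $\SL_2(5)$ (this is how the paper uses Corollary \ref{cpc}), which would spare you the Atlas check of the ${\rm Alt}_6$, ${\rm Alt}_7$, ${\rm PSL}_2(7)$ and ${\rm PSU}_4(2)$ covers.
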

\begin{proof}  We complete the proof by induction on $|G|+[G:N]$.
	Let $E$ be a normal maximal subgroup of $G$ such that $N \leqslant
	E$ and $G / E$ is not solvable.
	Then, $G/E$ has the unique minimal normal subgroup
	$M/E$ and $M/E$ is not solvable.  Let $\mathfrak{A}=\{\mu_{1}, \ldots,
	\mu_d\}$ be a subset of ${\rm Irr}(\lambda^{E})$ such that every element of ${\rm Irr}(\lambda^{E})$ is $G$-conjugate to exactly one element of $\mathfrak{A}$. Then, ${\rm Irr}(\lambda^G)=\dot{\cup}_{i=1}^d{\rm Irr}(\mu_i^G)$. Since ${\rm Irr}_{\mathbb{F},even}(\lambda^G)\neq \emptyset$, there exists   $i \in \{1,\ldots,d\}$ such that ${\rm Irr}_{\mathbb{F},even}(\mu_{i}^G) \neq \emptyset$. If $N < E$, then it follows from induction that  ${\rm acd}_{\mathbb{F},even}(\mu_{i}^G) \geq f(\mathbb{F})$. Hence, Lemma \ref{later} forces ${\rm acd}_{\mathbb{F},even}(\lambda^G)\geq  f(\mathbb{F})$, as wanted. Next, assume that $N=E$. If either $\lambda(1)[G:I_G(\lambda)] >1$, $\lambda$ is extendible to $M$, or $M/N$ is non-simple, then Theorem \ref{us2} shows that ${\rm acd}_{\mathbb{F},even}(\lambda^G)\geq 4$. So, the proof follows. Now, suppose that $\lambda(1)[G:I_G(\lambda)] =1$, $\lambda$ is not extendible to $M$ and $M/N$ is simple. So, ${\rm ker}\lambda \trianglelefteq G$. If ${\rm ker}\lambda\neq 1$, then as $G/{\rm ker}\lambda$ satisfies the assumptions and ${\rm acd}_{\mathbb{F},even}(\lambda^{G})={\rm acd}_{\mathbb{F},even}(\lambda^{G/{\rm ker}\lambda})$,  the proof follows from induction. Now, let ${\rm ker}\lambda=1$. Thus, $N \leq {\bf Z}(G)$. If $\chi(1) \geq 4$ for every $\chi \in {\rm Irr}_{\mathbb{F},even}(\lambda^{G})$, then ${\rm acd}_{\mathbb{F},even}(\lambda^{G}) \geq 4$. Otherwise, there exists $\chi \in {\rm Irr}_{\mathbb{F},even}(\lambda^{G})$
	such that $\chi(1)=2$.  Assume that $M_1\unlhd G$ is minimal  such that $M_1 \leq M$ and $M_1$ is non-solvable. So, $M_1N=M$ and $M_1'=M_1$. As $\lambda$ is not extendible to $M$ and ${\rm ker}\lambda=1$,
we see that $M_1 \cap N \neq 1$ and $\lambda_{M_1 \cap N}$ is not extendible to $M_1$.  It follows from Corollary \ref{cpc} that $G/N \cong {\rm Alt}_5$ and there exists a normal subgroup $C$ of $G$ such that $G=M_1C$ is a central  product of $M_1$ and $C$, ${\rm SL}_2(5) \cong M_1 \leq M$ and $M_1 \cap C={\bf Z}(M_1) \leq N$.
	So, $\chi_{M_1} \in {\rm Irr}(M_1)$. However, $\chi \in {\rm Irr}_{\mathbb{F},even}(\lambda^{G})$. Therefore, $\chi_{M_1} \in  {\rm Irr}_{\mathbb{F},even}(M_1)$. Observing the character table of ${\rm SL}_2(5)$, we get that  $\mathbb{F}\neq \mathbb{Q}$. This shows that $\alpha(1) \geq 4$ for every $\alpha \in {\rm Irr}_{\mathbb{Q},even}(\lambda^G)$, and consequently ${\rm acd}_{\mathbb{Q},even}(\lambda^G)\geq 4=f(\mathbb{Q})$. Next, assume that $\mathbb{F}\in \{ \mathbb{R},\mathbb{C} \}$. As $(G,N,\lambda)$ is a character triple and $G/N \cong {\rm Alt}_5$, $\lambda^G=\Sigma_{i=1}^4 e_i\psi_i$ for some positive integers $e_i$s and $\psi_i \in {\rm Irr}(\lambda^G)$ such that $\psi_1(1)=\psi_2(1)=2$, $\psi_3(1)=4$ and $\psi_4(1)=6$. Since ${\rm Irr}(\lambda^G)$
	contains exactly one character of degree $4$ and one character of degree $ 6$, we deduce that $\psi_3$ and $\psi_4$ are $\mathbb{F}$-valued. Also, $\chi \in \{\psi_1,\psi_2\}$ is $\mathbb{F}$-valued. Thus, $\psi_1 $ and $\psi_2$ are $\mathbb{F}$-valued and ${\rm acd}_{\mathbb{F},even}(\lambda^{G})\geq (2+2+4+6)/4$. Therefore, ${\rm acd}_{\mathbb{F},even}(\lambda^{G}) \geq 7/2$, as desired. Now, the proof is complete.
\end{proof}

The following corollary, which follows immediately from Theorems \ref{key1} and \ref{theoremb}, is Theorem D.
\begin{corollary} Let $1 \neq N \unlhd G$. If $0<{\rm acd}_{\mathbb{F},even}(G|N) < f(\Bbb{F})$, then
	$G$ is solvable.
\end{corollary}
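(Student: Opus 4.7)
The plan is to derive the corollary directly from Theorems \ref{key1} and \ref{theoremb}, essentially as a bookkeeping argument about Clifford orbits. The key observation is the numerical one: $f(\Bbb{Q}) = 4 = g(\Bbb{Q})$ and $f(\Bbb{R}) = f(\Bbb{C}) = 7/2 < 18/5 = g(\Bbb{R}) = g(\Bbb{C})$, where $g$ denotes the bound appearing in Theorem \ref{key1}. Consequently, the hypothesis $0 < \acdek(G|N) < f(\Bbb{F})$ automatically implies $0 < \acdek(G|N) < g(\Bbb{F})$, so Theorem \ref{key1} already yields that $N$ is solvable. It therefore suffices to show that $G/N$ is solvable.

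Suppose, for contradiction, that $G/N$ is non-solvable. Since $\acdek(G|N) > 0$, the set ${\rm Irr}_{\Bbb{F},even}(G|N)$ is non-empty. Choose a complete set $\{\lambda_{1},\ldots,\lambda_{t}\}$ of representatives for the action of $G$ on ${\rm Irr}(N) \setminus \{1_{N}\}$. Then
\[
{\rm Irr}(G|N) = \dot\bigcup_{i=1}^{t} {\rm Irr}(\lambda_{i}^{G}),
\]
and hence ${\rm Irr}_{\Bbb{F},even}(G|N) = \dot\bigcup_{i=1}^{t} {\rm Irr}_{\Bbb{F},even}(\lambda_{i}^{G})$. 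Discarding those indices $i$ for which ${\rm Irr}_{\Bbb{F},even}(\lambda_{i}^{G}) = \emptyset$ yields a partition of the non-empty set ${\rm Irr}_{\Bbb{F},even}(G|N)$ into the non-empty blocks ${\rm Irr}_{\Bbb{F},even}(\lambda_{i}^{G})$.

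For each retained index $i$, since $G/N$ is non-solvable and ${\rm Irr}_{\Bbb{F},even}(\lambda_{i}^{G}) \neq \emptyset$, Theorem \ref{theoremb} applies and gives ${\rm acd}_{\Bbb{F},even}(\lambda_{i}^{G}) \geq f(\Bbb{F})$. Invoking Lemma \ref{later} on the partition above, we conclude
\[
\acdek(G|N) \geq f(\Bbb{F}),
\]
which contradicts the hypothesis $\acdek(G|N) < f(\Bbb{F})$. Therefore $G/N$ must be solvable, and combined with the solvability of $N$ obtained from Theorem \ref{key1}, this forces $G$ to be solvable.

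Since this argument is essentially a direct assembly of the two previous theorems, there is no serious obstacle; the only point that needs care is verifying the numerical inequality $f(\Bbb{F}) \leq g(\Bbb{F})$ so that Theorem \ref{key1} is indeed applicable, and ensuring that the Clifford-orbit decomposition is consistent with restricting to the $\Bbb{F}$-valued even-degree characters before averaging via Lemma \ref{later}.
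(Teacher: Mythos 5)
Your proof is correct and follows exactly the route the paper intends: the paper's own proof is just the remark that the corollary "follows immediately" from Theorems \ref{key1} and \ref{theoremb}, and your argument — checking $f(\Bbb F)\leq g(\Bbb F)$ to get solvability of $N$ from Theorem \ref{key1}, then decomposing ${\rm Irr}_{\Bbb F,even}(G|N)$ into Clifford orbits and applying Theorem \ref{theoremb} together with Lemma \ref{later} to force solvability of $G/N$ — is precisely that assembly, spelled out.
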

\begin{Remark} Let $1 \neq N \unlhd G$ and $\mathbb{F} \in \{\mathbb{R},\mathbb{C}\}$. Suppose that
	${\rm Irr}_{\mathbb{F},even}(G|N)\neq \emptyset$ and $G / N$ is
	non-solvable. Then, considering the argument given in the proof of Theorem  \ref{theoremb} leads us to see that either ${\rm acd}_{\mathbb{F},even}(G|N) \geq 18/5$ or $G/E \cong {\rm Alt}_5$ for some solvable subgroup $E$ with  $N \leq E \trianglelefteq G$ and ${\rm acd}_{\mathbb{F},even}(G|N) \geq 7/2$. In particular,  ${\rm acd}_{\mathbb{C},even}(G|N) = 7/2$ if and only if there exist the normal subgroups $M_1$ and $C$ of $G$ such that  $G=M_1C$ is a central product of $M_1$ and $C$,  $M_1\cong {\rm SL}_2(5)$, $C$ is abelian and $C\cap M_1={\bf Z}(M_1)=N$.
\end{Remark}
\maketitle

\bibliographystyle{amsplain}

\end{document}